\documentclass[12pt]{article}
\usepackage[titletoc,toc,title]{appendix}
\usepackage{amsfonts,amssymb, bm}
\usepackage[a4paper, margin=2.7cm]{geometry}
\usepackage{graphicx,color,epstopdf}
\usepackage{amsmath, morefloats, bm}
\newtheorem{prop}{Proposition}[section]
\newtheorem{mylemma}{Lemma}[section]
\newtheorem{mytheorem}{Theorem}[section]
\newtheorem{mycorollary}{Corollary}[section]
\newtheorem{myremark}{Remark}[section]
\newtheorem{proof}{Proof}[section]

\def\XXint#1#2#3{{\setbox0=\hbox{$#1{#2#3}{\int}$}
		\vcenter{\hbox{$#2#3$}}\kern-.5\wd0}}

\def\bi{{\bf i}}
\def\ol#1{\overline{#1}}
\def\Log{{\rm Log}}
\def\eps{{\epsilon}}

\def\mCpp{{\mathbb{C}^{++}}}
\def\mCpm{{\mathbb{C}^{+-}}}
\def\mCmp{{\mathbb{C}^{-+}}}

\def\tx{{\tilde{x}}}
\def\ty{{\tilde{y}}}
\def\tM{{\tilde{M}}}

\makeatletter
\newcommand{\undersim}[1]{\mathrel{\mathpalette\@undersimG{#1}}}
\newcommand{\@undersim}[2]{%
  \vcenter{%
    \ialign{%
      ##\cr
      $\m@th#1#2$\cr
      \noalign{\nointerlineskip\kern.2ex}
      $\m@th#1\sim$\cr
      \noalign{\kern-.4ex}
    }%
  }%
}

\newcommand{\tu}{\tilde{u}}
\newcommand{\cb}[1]{{\color{black}#1}}

\newcommand{\dd}[1]{\left\langle #1\right\rangle}
\makeatother

\begin{document}
\title{On Wellposedness and Convergence of UPML method for analyzing wave scattering in layered media} \author{Wangtao Lu \thanks{School of
    Mathematical Sciences, Zhejiang University, Hangzhou 310027, China. Email:
    wangtaolu@zju.edu.cn.} \and Jun Lai\thanks{School of Mathematical Sciences,
    Zhejiang University, Hangzhou 310027, China. Email: laijun6@zju.edu.cn.}
  \and {Haijun Wu} \thanks{Department of Mathematics, Nanjing University,
    Nanjing, 210093, China. Email: hjw@nju.edu.cn. {This author was partially supported by the NSF of China under grants 11525103 and 91630309.}} } {

}
\maketitle

\begin{abstract}
  This paper \cb{proposes a novel method to} establish the wellposedness and
  convergence theory of the uniaxial-perfectly-matched-layer (UPML) method in
  solving a two-dimensional acoustic scattering problem due to a compactly
  supported source, where the medium consists of two layers separated by the
  horizontal axis. When perfectly matched layer (PML) is used to truncate the
  vertical variable only, the medium structure becomes a closed waveguide. The
  Green function due to a primary source point in this waveguide can be
  constructed explicitly based on variable separations and Fourier
  transformations. In the horizontal direction, by properly placing periodical
  PMLs and locating periodic source points imaged by the primary source point,
  the exciting waveguide Green functions by those imaging points can be
  assembled to construct the Green function due to the primary source point for
  the two-layer medium truncated by a UPML. Incorporated with Green's
  identities, this UPML Green function directly leads to the wellposedness of
  the acoustic scattering problem in a UPML truncation \cb{with no constraints
    about wavenumbers or UPML absorbing strength. Consequently, we firstly prove
    that such a UPML truncating problem is unconditionally resonance free.
    Moreover, we show, under quite general conditions, that this UPML Green
    function converges to the exact layered Green function exponentially fast as
    absorbing strength of the UPML increases, which in turn gives rise to the
    exponential convergence of the solution of the UPML problem towards the
    original solution.}


\end{abstract}
	
\section{Introduction}
	
Large amount of applications in optics (electromagnetics) and acoustics require
the accurate analysis of wave scattering in a layered medium. Examples include
optical waveguides, near field imaging, communication with submarine, detection
of buried objects and so on. As a result, the analysis and numerical computation
of layered medium scattering problems have been constantly attracting attentions
from researchers both in engineering and mathematical communities
\cite{Gang2011Imaging, Chew90, Jiang2016Quantitative, NovoHecht06,
  Bao2018HuYin, LuHu2019, LuLu2012, SongLu2015}.

 In this paper, we are concerned with two-dimensional time-harmonic acoustic scattering in a two-layered medium
\begin{align}\label{eq:org:helm}
\Delta u  + k(x)^2 u &= f,\quad{\rm in}\quad\mathbb{R}^2\backslash\Gamma,
\end{align}
where $f$ is the source term with a compact support $D\in\mathbb{R}^2$, and $u$
is the scattered field; see Figure~\ref{fig:model}(a).
\begin{figure}[!ht]
  \centering
  (a)\includegraphics[width=0.3\textwidth]{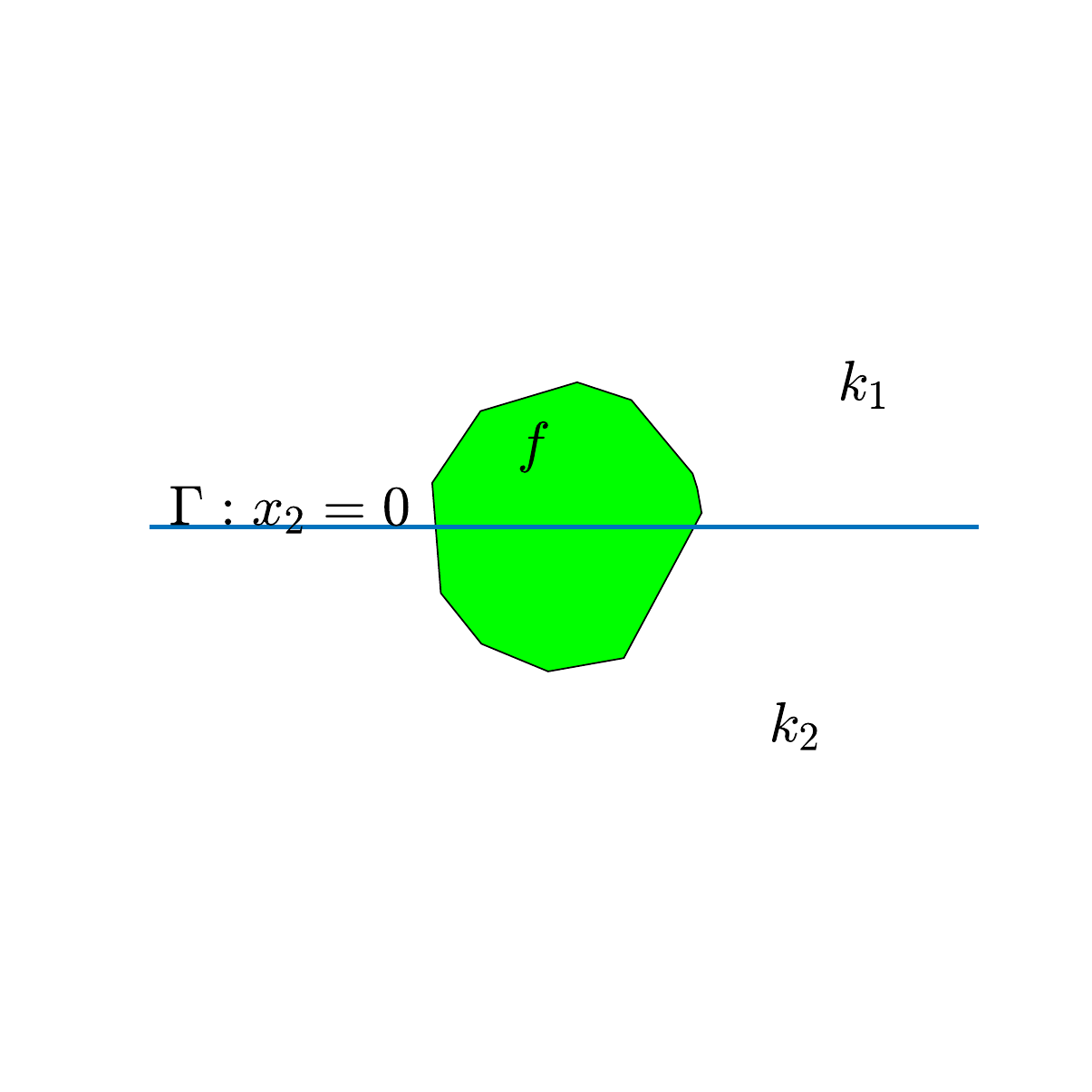}
  (b)\includegraphics[width=0.3\textwidth]{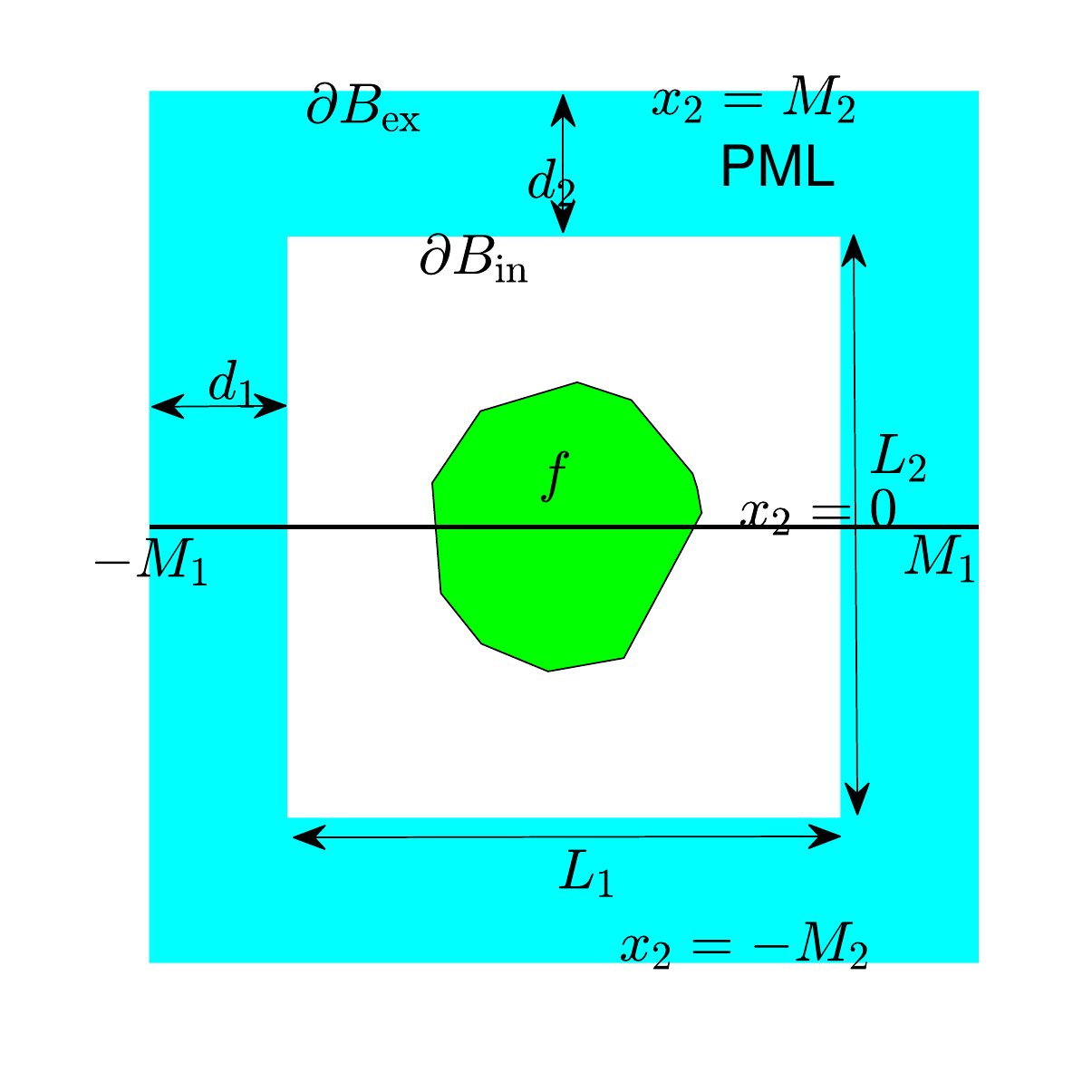}
  \vspace{-0.4cm}
  \caption{\cb{(a): The two-layer medium structure with compactly
    supported source $f$. (b): The UPML truncation.}} 
  \label{fig:model}
\end{figure}
Denote by $x=(x_1,x_2)$ the two dimensional {coordinates}. The interface $\Gamma$ is simply assumed to be the axis $x_2=0$, by which the domain $\mathbb{R}^2$ is divided into the upper half space $\mathbb{R}_+^2$ and lower half space $\mathbb{R}_-^2$. The wavenumber $k(x)$ takes the form
\begin{align}\label{eq:def:k}
k(x) = \left\{
\begin{array}{lc}
k_1,\quad x\in\mathbb{R}_+^2,\\
k_2,\quad x\in\mathbb{R}_-^2.
\end{array}
\right.
\end{align} 
where $k_1$ and $k_2$ are two arbitrary positive real number. We assume the field and flux are continuous across the interface $\Gamma$
\begin{align}
\label{eq:continuity}
[u]_{\Gamma} &= 0,\quad [\partial_n u]_{\Gamma} = 0,
\end{align}
where $[\cdot]$ denotes the jump across the interface $\Gamma$. The field $u$ also satisfies the Sommerfeld radiation condition at infinity
\begin{align} \label{eq:rad:cond}
 \lim_{|x|\rightarrow\infty} |x|^{1/2}\left( \partial_{|x|} u - \bi k(x) u \right) &= 0.
 \end{align}
  
Due to the important roles they play in applications, the layered medium scattering problems have been studied extensively in the literature. For the well-posedness, we refer readers to \cite{Bao2018HuYin, KG1980, Odeh63} for the acoustic scattering problems in a two-layered medium with locally perturbed interfaces and to \cite{CPHC98} for the layered  electromagnetic scattering problems.  Discussions on the inverse scattering problems in a layered medium can be found in \cite{Gang2011Imaging}. For numerical computation, given the infinite domain of Equation \eqref{eq:org:helm}, integral equation method is a natural candidate  since they discretize the support $D$ alone and
impose the Sommerfeld radiation condition by construction. In particular, if we give the layered medium Green's function $G_{k_1,k_2}(x,y)$, the scattered field $u$ can be found simply by an integration
\begin{align}\label{intrep}
u(x) =\int_{D}G_{k_1,k_2}(x,y)f(y)dy. 
\end{align}
However, in order to make effective use of this approach, one must generally
evaluate the governing Green's function $G_{k_1,k_2}(x,y)$ that satisfies the
continuity interface conditions \eqref{eq:continuity}. \cb{Using Fourier
  analysis, the closed form of $G_{k_1,k_2}(x,y)$ can be derived in terms of
  Sommerfeld integrals~\cite{sommerfeld}, which, however, is quite costly to
  evaluate \cite{cai}. Over the past decades, a number of methods have been
  developed to remedy this issue: literatures in \cite{caiyu,
    okhmatovski-2014,paulus_2000,Perez-Arancibia2014, koh, oneil-imped} aim at
  developing fast algorithms to efficiently evaluate such layered media Green
  functions; literatures in \cite{Bruno2016Windowed, LAI2018359} use windowed
  function methods to avoid the evaluation of layered media Green functions,
  etc.}

\cb{A more attactive approach is introducing a perfectly matched layer (PML) to
  truncate the domain so that some standard methods like finite-difference
  method or finite element method could apply.} The basic idea of PML, which was
first proposed by Berenger \cite{BERENGER1994185} in the 90s, is to truncate the
infinite computation domain by an artificial layer with zero boundary condition
in the exterior; see Figure~\ref{fig:model}(b). The layer has been specifically
designed to absorb all \cb{outgoing} waves propagating from the interior of the
computational domain. Due to the effectiveness of this method in computation,
considerable attentions have been paid to the convergence study. These include
the acoustic scattering problems by Lassas et al \cite{Lassas1998On,
  lassas_somersalo_2001}, Hohage et al \cite{Hohage2003Solving}, and Collino et
al \cite{CollMonk}, the grating problems with adaptive FEM by Chen et
al\cite{ChenWu03}, Bao et al \cite{baochenwu05}, the electromagnetic scattering
problems by \cite{baowu05, brampas12, Chen2013, LiWu2019, ZhouWu2018}, and the
elastic scattering problems by \cite{brampas10, chenxi15}. As they all focused
on the scattering within homogeneous background, analysis for the layered medium
scattering problem becomes much more complicated due to the lack of closed form
of the layered Green's function. Recently, great progress has been made for
two-layer media by Chen and Zheng \cite{chenzheng} for acoustic scattering
problems and \cite{Chen2017PML} for electromagnetic scattering problems based on
the Cagniard–de Hoop transform for the Green’s function; \cb{the first author
  and his collaborators in \cite{Lu2018Perfectly} developed a hybrid method of
  PML and boundary integral equation for numerically solving the two-layer
  scattering problem}. An important conclusion of these studies is that the PML
solution converges to the solution of the original scattering problem
exponentially fast as the PML absorbing strength increases.

  However, despite all these contributions, a fundamental question remains open:
  is the truncated PML problem always \cb{unconditionally resonance free}? In
  other words, for a scattering problem \cb{after a PML truncation}, is it
  always uniquely solvable for \cb{arbitrary positive wavenumber and arbitrary
    absorbing strength}. For acoustic scattering in homogeneous background,
  Collino and Monk showed that the truncated problem has a unique solution
  except at a discrete set of exceptional frequencies for PML in curvilinear
  coordinates \cite{CollMonk}. They conjectured that the exceptional set might
  be empty without a proof. \cb{On the other hand, for acoustic scattering in
    layered media, the authors in \cite{chenzheng} proved that the truncated
    problem has a unique solution when the PML absorbing strength is
    sufficiently large. } In this paper, we \cb{propose a novel} approach to show
  that the acoustic scattering problem \cb{in two-layer media} due to a compactly
    supported source is always resonance free with uniaxial PML (UPML)
  truncation.

When PML is used to truncate the vertical variable only, the medium structure
becomes a closed waveguide. The Green function due to a primary source point in
this waveguide can be constructed explicitly based on variable separations and
Fourier transformations. In the horizontal direction, if we properly place
periodical PMLs and carefully choose periodic source points imaged by the
primary source point, the exciting waveguide Green functions can be assembled to
construct the Green function due to the primary source point for the two-layer
medium truncated by a UPML. \cb{Incorporated with Green’s identities, this UPML
  Green function directly leads to the wellposedness of the acoustic scattering
  problem in a UPML truncation with no constraints about wavenumbers or UPML
  absorbing strength. Consequently, we firstly prove that such a UPML truncating
  problem is unconditionally resonance free. Moreover, we show, under quite
  general conditions, that the layered Green’s function with UPML truncation
  converges to the exact layered Green’s function exponentially fast as
  absorbing strength of the UPML increases, which in turn gives rise to the
  exponential convergence of the solution of the UPML problem to the original
  solution.}

The outline of paper is given as follows. Section 2 introduces the UPML
formulation and presents our two main results. Section 3 gives an explicit
construction of the layered Green's function in a UPML truncated domain, and
proves the wellposedness of the solution for the truncated layered scattering
problem with a compact source in this section. Section 4 is devoted to the
exponential convergence study for the solution of the layered medium problem
when absorbing strength of the UPML increases. We conclude the paper with a
brief discussion on the future work in Section 5.

\section{UPML formulation and main results}

We restrict our discussion to the layered medium with ratio of the wavenumber
$\kappa:=k_2/k_1>1$, as the analysis for $\kappa<1$ is the same by symmetry.
\cb{For the simplicity of notation, we shall frequently use $C$ for a generic
  positive constant, which, unless otherwise specified, is independent of all
  model parameters $k_1$ and $ k_2$, and the PML parameters $L_j, d_j,
  \sigma_j$. We will also often write $A\lesssim B$ and $B\gtrsim A$ for the
  inequalities $A \le CB$ and $B \ge CA$, respectively. $A\eqsim B$ is used for
  an equivalent statement when both $A\lesssim B$ and $B\lesssim A$ hold.} By
rescaling, we will also assume the compact support $D$ of the source term $f$ in
equation \eqref{eq:org:helm} is {enclosed in a disk centered at the origin of
  radius $R\lesssim1$}.

As shown in Figure~\ref{fig:model}(b), to truncate the scattering problem by UPML,
we introduce two rectangular boxes. One is the inner box $B_{\rm
  in}=[-L_1/2,L_1/2]\times [-L_2/2,L_2/2]$ {of sizes $L_j>0$, $j= 1, 2$, which
  we call the physical domain in the following.} Denote by $B^1_{\rm in}=B_{\rm
  in}\cap \mathbb{R}_+^2$ and $B^2_{\rm in}=B_{\rm in}\cap \mathbb{R}_-^2$ the
intersection of the box $B_{\rm in}$ with the upper and lower half space,
respectively. The other is the outer box $B_{\rm ex}=[-M_1,M_1]\times
[-M_2,M_2]$, with $M_j=L_j/2+d_j$ and $d_j>0$, $j = 1, 2$, which is the
computational domain we are concerned with. The parameter $d_j$ represents the
thickness of the uniaxial PML along the $x_j$ direction. Denote by $B^1_{\rm
  ex}=B_{\rm ex}\cap \mathbb{R}_+^2$ and $B^2_{\rm ex}=B_{\rm ex}\cap
\mathbb{R}_-^2$ the intersection of the box $B_{\rm ex}$ with the upper and
lower half space, respectively. For further discussion, we divide the complex
plane $\mathbb{C}$ into four regions, namely, $\mathbb{C}^{-+} =
\{z\in\mathbb{C}:{\rm Re}(z)<0,{\rm Im}(z)>0\}$,
$\mathbb{C}^{++}=\{z\in\mathbb{C}:{\rm Re}(z)>0,{\rm Im}(z)>0\}$,
$\mathbb{C}^{--}=\{z\in\mathbb{C}:{\rm Re}(z)<0, {\rm Im}(z)<0\}$,
$\mathbb{C}^{+-}=\{z\in\mathbb{C}: {\rm Re}(z)>0, {\rm Im}(z)<0\}$. We will
denote $a^+=\sqrt{a^2}$ where $\sqrt{\cdot}$ is defined to be in the branch with
nonnegative imaginary part.

\cb{Mathematically, UPML can be described by the following complex coordinate
  transformations \cite{cjm97} 
\begin{eqnarray}\label{eq:cct}
  \tilde{x}_j = x_j+\bi\int_0^{x_j}\sigma_j(t)dt = \int_0^{x_j}\alpha_j(t)dt, \quad x_j\in[-M_j,M_j],\quad
  j = 1,2,
\end{eqnarray} 
where $\sigma_j(t)$ is the absorbing function on $[-M_j,M_j]$, and the medium
function $\alpha_j = 1+\bi \sigma_j$. To simplify the presentation, we assume
that $\sigma_j, j=1,2$ are Lipschitz continuous and satisfy the following
conditions
\begin{equation}\label{condforabs}
\begin{cases}
\sigma_j(t)=0, \mbox{ for } t\in [-L_j/2,L_j/2],  \\
\sigma_j(t)\ge 0 \mbox{ and } \sigma_j(t)=\sigma_j(-t), \mbox{ for } t \in [-M_j,M_j]\backslash [-L_j/2,L_j/2], \\
\bar{\sigma}_j=\int_{-M_j}^{-L_j/2}\sigma_j(t)dt=\int_{L_j/2}^{M_j}\sigma_j(t)dt>0.
\end{cases}
\end{equation} 
Here, $\bar{\sigma}_j, j=1,2$ reflect absorbing strength of the UPML and are
called absorbing constants.
\begin{myremark}
  The whole results in this paper can be extended easily to when $\sigma_j$ is
  nonnegative and piecewise Lipschitz continuous and $\sigma_j(-t)\neq
  \sigma_j(t)$ for $j=1,2$. The condition that $\bar{\sigma}_j>0$ for $j=1,2$ in
  \eqref{condforabs} cannot be removed as otherwise transformations
  (\ref{eq:cct}) cannot be regarded as a UPML.
\end{myremark}
}

With the definition above, the original scattering problem with UPML truncation is formulated as {(see e.g. \cite{chenzheng})}
\begin{align}\label{eq:upml}
\begin{cases}
&\frac{\partial}{\partial x_1}\left( \frac{\alpha_2}{\alpha_1}\frac{\partial \tilde{u}}{\partial x_1} \right)+ \frac{\partial}{\partial x_2}\left( \frac{\alpha_1}{\alpha_2}\frac{\partial \tilde{u}}{\partial x_2} \right) + \alpha_1\alpha_2 k^2\tilde{u} = f \mbox{ in } B_{\rm ex},\\
&[\tilde{u}] = 0,\quad [\partial_{x_2} \tilde{u}] = 0 \mbox{ on } \Gamma_{\rm ex},\\
&\tilde{u} = 0 \mbox{ on } \partial B_{\rm ex}.
\end{cases}
\end{align}
where $\Gamma_{\rm ex} =\Gamma\cap B_{\rm ex}$, and $\tu$ is the scattered solution
with UPML truncation. The variational formulation of the above problem reads:
Find $\tu\in H^{1}_0(B_{\rm ex})$ such that
\begin{equation}
\label{eq:varform:a}
a(\tu,v):= ({\bf A}\nabla \tu,\nabla v)_{B_{\rm ex}} - (k^2\alpha_1 \alpha_2\tu,v)_{B_{\rm ex}} = -{\dd{f,v}_{B_{\rm ex}}\quad\forall v\in H^{1}_0(B_{\rm ex}), }
\end{equation}
 where ${\bf A}={\rm
  diag}(\alpha_2/\alpha_1,\alpha_1/\alpha_2)$. Here, {we denote by
  ${\dd{f,v}}_{B_{\rm ex}}$ the duality paring between $f\in H^{-1}(B_{\rm ex})$ and
  $v\in H_0^{1}(B_{\rm ex})$, as $ H^{-1}(B_{\rm ex})$ is the dual space of
  $H_0^{1}(B_{\rm ex})$, and by $(\cdot,\cdot)_{B_{\rm ex}}$ the usual
  $L^2-$inner product on $B_{\rm ex}$.}

Our first main result is concerned with the well-posedness of the variational equation \eqref{eq:varform:a}. 
\begin{mytheorem}\label{wellposethm}
	For any positive wavenumbers $k_2>k_1$, and any positive constants $d_j, L_j,
  and \bar{\sigma}_j$ for $j=1,2$, there exists, for any $f\in H^{-1}(B_{\rm ex})$,
  a unique solution $\tu\in H_0^1(B_{\rm ex})$ to the variational equation
  \eqref{eq:varform:a}.
\end{mytheorem}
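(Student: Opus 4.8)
\medskip
\noindent\emph{Proof strategy.} The plan is to deduce existence from uniqueness via the Fredholm alternative, and to obtain uniqueness from the UPML Green's function that will be constructed explicitly in Section~3. For the first step I would check that $a(\cdot,\cdot)$ satisfies a G{\aa}rding inequality on $H_0^1(B_{\rm ex})$: for fixed Lipschitz functions $\sigma_j$ on the compact intervals $[-M_j,M_j]$ we have $\sigma_j\le\Sigma$ for some $\Sigma>0$, and the principal coefficients satisfy ${\rm Re}(\alpha_2/\alpha_1)=(1+\sigma_1\sigma_2)/(1+\sigma_1^2)\ge 1/(1+\Sigma^2)>0$ and likewise ${\rm Re}(\alpha_1/\alpha_2)>0$, while $\alpha_1\alpha_2=(1+\bi\sigma_1)(1+\bi\sigma_2)$ is bounded. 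Hence ${\rm Re}\,a(v,v)\ge c\|\nabla v\|_{L^2(B_{\rm ex})}^2-C\|v\|_{L^2(B_{\rm ex})}^2\ge c\|v\|_{H^1(B_{\rm ex})}^2-C'\|v\|_{L^2(B_{\rm ex})}^2$. Since the zeroth-order term $v\mapsto(k^2\alpha_1\alpha_2 v,\cdot)_{B_{\rm ex}}$ (together with the shift needed to make the principal part coercive) represents a compact operator on $H_0^1(B_{\rm ex})$ by the Rellich embedding $H_0^1(B_{\rm ex})\hookrightarrow\hookrightarrow L^2(B_{\rm ex})$, the operator induced by $a$ is a compact perturbation of an isomorphism, hence Fredholm of index zero; so \eqref{eq:varform:a} is uniquely solvable for every $f\in H^{-1}(B_{\rm ex})$ once the homogeneous problem $a(\tu,v)=0\ \forall v$ forces $\tu=0$.

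\emph{Injectivity via the UPML Green's function.} Let $\tu\in H_0^1(B_{\rm ex})$ solve \eqref{eq:upml} with $f=0$. I would test $\tu$ against the UPML Green's function $\tilde G(\cdot,y)$ whose explicit construction is the content of Section~3: by imaging the primary source across the (periodic) PML boundaries and assembling the corresponding closed-waveguide Green's functions, one obtains a function $\tilde G(\cdot,y)$ that solves the transformed layered equation with source $-\delta_y$, that matches the interface conditions on $\Gamma_{\rm ex}$, that carries only the standard logarithmic singularity at $y$ and is otherwise $H^1$, and that vanishes on $\partial B_{\rm ex}$. Because the divergence-form operator in \eqref{eq:upml} is formally symmetric with respect to the non-conjugated bilinear pairing $\int_{B_{\rm ex}}uv$, excising a small disc around $y$ and applying Green's second identity on $B_{\rm ex}$ expresses $\tu(y)$ as a boundary integral over $\partial B_{\rm ex}$ plus an interface integral over $\Gamma_{\rm ex}$. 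The boundary term vanishes since both $\tu$ and $\tilde G(\cdot,y)$ are zero on $\partial B_{\rm ex}$; the interface term vanishes because $\alpha_1/\alpha_2\equiv 1$ near $\Gamma$ (all $\sigma_j$ vanish there), so that the matched conditions $[\tu]=[\partial_{x_2}\tu]=0$ and the analogous ones for $\tilde G$ cancel it. Therefore $\tu(y)=0$ at every source point $y$ where $\tilde G(\cdot,y)$ is defined, i.e.\ $\tu\equiv0$; this finishes uniqueness, and alternatively gives existence directly through $\tu(x)=\int_{B_{\rm ex}}\tilde G(x,y)f(y)\,dy$.

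\emph{Main obstacle.} The genuinely hard part is not the reduction above but the construction and justification of $\tilde G$ in Section~3: showing that the series over the imaged source points converges, that the resulting function really does solve the UPML-truncated layered problem with a point source and the homogeneous Dirichlet condition, and that it is regular enough to legitimize the Green's-identity step. Going through an explicit Green's function seems unavoidable here, since the UPML sesquilinear form is neither coercive nor self-adjoint and the coefficients are not assumed small, so the Rellich-type energy arguments that give uniqueness for homogeneous-background scattering do not apply for arbitrary wavenumbers and absorbing strengths --- precisely the reason earlier works could only exclude resonances conditionally (cf.\ \cite{CollMonk,chenzheng}).
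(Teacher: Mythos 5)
Your proposal follows the paper's proof almost step for step: both reduce existence to uniqueness via the G{\aa}rding inequality and the Fredholm alternative of index zero, and both derive uniqueness by pairing the homogeneous solution $w$ with the explicitly constructed UPML Green's function $G_{\rm PML}(\cdot,y)$ through the (non-conjugated) second Green's identity, using that $G_{\rm PML}-\Phi(k_j,\tilde x;\tilde y)$ is smooth near $y$ to pass to the limit as the excised disc shrinks. The only technical step you leave tacit is the elliptic-regularity upgrade $w\in H^2(B_{\rm ex})$ (valid because $\mathbf{A}$ is Lipschitz) needed to justify Green's identity; otherwise the argument and its reliance on Section~3's construction of $G_{\rm PML}$ are identical to the paper's.
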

\begin{myremark}
  In contrast with the previous well-posedness results in \cite{Lassas1998On,
    chenzheng, Chen2017PML} where absorbing constants $\bar{\sigma}_j$ are
  required to be sufficiently large to ensure no resonances could occur, \cb{our
    results allow $\bar{\sigma}_j$ to be any positive constant and affirmatively
    show that our problem is unconditionally resonance free. }
\end{myremark}

  Our second main result studies \cb{the error between the UPML solution $\tu$ and
  the original solution $u$ as PML parameters vary}. {In doing so, we need
    the following assumptions about the PML parameters:
\begin{align}
  \label{eq:assump1}
  \min(L_1,L_2)> 2R,\\
  \label{eq:assump2}
  \frac{\bar{\sigma}_1}{\bar{\sigma}_2}\eqsim\frac{L_1-2R}{L_2+2R}\eqsim\frac{L_1}{L_2}\eqsim\frac{d_1}{d_2}\eqsim 1,\\
  \label{eq:assump3}
  \min(L_j,d_j,\bar{\sigma}_j)\gtrsim \frac{1}{k_1}.
\end{align}
Here, (\ref{eq:assump1}) indicates that the PMLs should enclose the support of
source $f$, (\ref{eq:assump2}) indicates that domains $B_{\rm in}$ and $B_{\rm
  ex}$ should {not be too thin} 
and that absorbing constants $\bar{\sigma}_1$ and $\bar{\sigma}_2$ should be
comparable, and (\ref{eq:assump3}) indicates that any of size of physical domain
$B_{\rm in}$, thickness of the PML, and PML absorbing constants should at least
be comparable with the wavelength so that the wavefield $u$ can be absorbed to
some extent.}
\begin{mytheorem}\label{convergthm}
	For $f\in L^2(\mathbb{R}^2)$ with compact support $D$, under the assumptions
  (\ref{eq:assump1}-\ref{eq:assump3}), the PML solution $\tu$ converges to the
  original scattering solution $u$ exponentially fast as the absorbing constants
  $\bar{\sigma}_j$, $j=1,2$, increase. In particular, it holds,
	\begin{align}
    \label{eq:L2error}
	||\tu - u||_{L^2(B_{\rm in})}&\lesssim e^{-\gamma k_1\bar{\sigma}_1}L_1||f||_{L^2(D)},\\
    \label{eq:H1error}
    ||\tu - u||_{H^1(B_{\rm in})}&\lesssim k_1L_1e^{-\gamma k_1\bar{\sigma}_1}||f||_{L^2(D)}.
	\end{align}
	where $\gamma\eqsim 1$ will be defined later in Theorem~\ref{Greenestimate}.
\end{mytheorem}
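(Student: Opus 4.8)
The plan is to deduce Theorem~\ref{convergthm} from the pointwise Green's function estimate of Theorem~\ref{Greenestimate}, combined with the integral representations of $u$ and of $\tu$. By \eqref{intrep} the original field is $u(x)=\int_D G_{k_1,k_2}(x,y)f(y)\,dy$ for every $x$, while the explicit construction of the UPML Green's function $\tilde G(x,y)$ in Section~3 together with Green's identities in $B_{\rm ex}$ gives $\tu(x)=\int_D\tilde G(x,y)f(y)\,dy$ for $x\in B_{\rm ex}$; this is first established for smooth $f$ and then extended to $f\in L^2(D)$ by density, using the well-posedness of Theorem~\ref{wellposethm}. Subtracting, the error $w:=\tu-u$ satisfies, for $x\in B_{\rm in}$,
\[
w(x)=\int_D\bigl(\tilde G(x,y)-G_{k_1,k_2}(x,y)\bigr)f(y)\,dy .
\]
Since $\tilde G$ and $G_{k_1,k_2}$ share the same free-space singularity along the diagonal, $\tilde G-G_{k_1,k_2}$ is smooth on $\overline{B_{\rm in}}\times\overline D$, a fact that matters for the $H^1$ bound.

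For \eqref{eq:L2error} I would fix $x\in B_{\rm in}$ and apply Cauchy--Schwarz in $y$ to get $|w(x)|\le\|\tilde G(x,\cdot)-G_{k_1,k_2}(x,\cdot)\|_{L^2(D)}\|f\|_{L^2(D)}$. Theorem~\ref{Greenestimate} bounds $|\tilde G(x,y)-G_{k_1,k_2}(x,y)|$ by $Ce^{-\gamma k_1\bar\sigma_1}$ uniformly for $x,y$ in the physical region, the assumptions \eqref{eq:assump1}--\eqref{eq:assump3} having been used to absorb all dependence on $L_j,d_j,\bar\sigma_j,k_1$ into the generic constant and into the rate $\gamma\eqsim1$. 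Because $|D|\lesssim R^2\lesssim1$, this yields $\|\tilde G(x,\cdot)-G_{k_1,k_2}(x,\cdot)\|_{L^2(D)}\lesssim e^{-\gamma k_1\bar\sigma_1}$; integrating $|w(x)|^2$ over $x\in B_{\rm in}$ and using $|B_{\rm in}|^{1/2}=\sqrt{L_1L_2}\eqsim L_1$ from \eqref{eq:assump2} gives \eqref{eq:L2error}.

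For \eqref{eq:H1error} I would use that in $B_{\rm in}$ the complex stretching is trivial, so $w$ solves the homogeneous transmission problem $\Delta w+k^2w=0$ in $B_{\rm in}\setminus\Gamma$ with $[w]=[\partial_{x_2}w]=0$ on $\Gamma\cap B_{\rm in}$ (the source terms cancel). One then controls $\nabla_x(\tilde G-G_{k_1,k_2})$ on $\overline{B_{\rm in}}$ by $k_1e^{-\gamma k_1\bar\sigma_1}$ --- either from the gradient bound contained in Theorem~\ref{Greenestimate}, or via a Caccioppoli-type interior estimate for the piecewise-constant-coefficient Helmholtz operator on a buffer of width $\eqsim1/k_1$, which is available by \eqref{eq:assump1} and \eqref{eq:assump3} and costs precisely one power of $k_1$. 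Inserting this into the representation of $w$ and redoing the estimate of the previous paragraph produces $\|\nabla w\|_{L^2(B_{\rm in})}\lesssim k_1L_1e^{-\gamma k_1\bar\sigma_1}\|f\|_{L^2(D)}$, which with \eqref{eq:L2error} gives \eqref{eq:H1error}.

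The substantive difficulty lies entirely in Theorem~\ref{Greenestimate}; granting it, this argument is mostly bookkeeping. Within that bookkeeping the delicate points are: the density argument justifying the representation $\tu(x)=\int_D\tilde G(x,y)f(y)\,dy$ for merely $L^2$ data; verifying that \eqref{eq:assump1}--\eqref{eq:assump3} truly allow every PML-parameter-dependent factor other than the advertised $L_1$ (resp.\ $k_1L_1$) to be swallowed into $C$ and $\gamma$; and, for \eqref{eq:H1error}, executing the gradient estimate across the interface $\Gamma$, where $k(x)$ jumps and the transmission conditions enter, while losing at most one factor of $k_1$.
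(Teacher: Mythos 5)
Your proposal follows the paper's own argument: the paper represents $\tu$ and $u$ via the kernels $G_{\rm PML}$ and $G_{k_1,k_2}$ as in \eqref{Greenrep1}--\eqref{Greenrep2}, invokes the pointwise bounds of Theorem~\ref{Greenestimate} on $G_{\rm PML}-G_{k_1,k_2}$ and on its $x$-gradient, then integrates, using $|D|\lesssim 1$, $|B_{\rm in}|^{1/2}\eqsim L_1$, and $\bar\sigma_1\eqsim\bar\sigma_2$ (from \eqref{eq:assump2}) to reach \eqref{eq:L2error}--\eqref{eq:H1error} — precisely your bookkeeping. The only cosmetic differences are that you call the UPML Green's function $\tilde G$ (the paper reserves $\tilde G$ for the analytically continued waveguide kernel and writes $G_{\rm PML}$ here), and the Caccioppoli alternative you mention for \eqref{eq:H1error} is unneeded since Theorem~\ref{Greenestimate} already supplies the gradient estimate, as you yourself note.
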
  

\begin{myremark}
  \cb{The method in \cite{chenzheng} can also be directly extended to our
    problem. However, some disadvantages may arise. First, \cite{chenzheng}
    makes some restrictions on the PML parameters including: $\sigma_j$ are
    piecewise constant, $d_1=d_2$, $\bar{\sigma}_1=\bar{\sigma}_2$, and
    $\bar{\sigma}_j$ are sufficiently large, etc.; it seems that those
    restrictions are not easy to release. Moreover, our error upper bound
    \eqref{eq:H1error} is sharper than Eq.~(7.8) in \cite{chenzheng} in the
    sense that in our upper bound (\ref{eq:H1error}), the constant in front of
    the exponent is proportional to $k_1L_1$ only, and that the decaying rate in
    the exponent is proportional to $k_1\bar{\sigma}_1$ only and is independent
    of $L_j$.}
\end{myremark}

The following sections are devoted to the proofs of Theorems \ref{wellposethm}
and \ref{convergthm}.

\section{Well-posedness of the scattering problem with UPML truncation} In order to study the well-posedness, we first show the the existence of Green's function for the UPML truncated layered medium scattering problem \begin{align}\label{eq:upml:green}
\begin{cases}
&\frac{\partial}{\partial x_1}\left( \frac{\alpha_2}{\alpha_1}\frac{\partial G_{\rm PML}}{\partial x_1} \right)+ \frac{\partial}{\partial x_2}\left( \frac{\alpha_1}{\alpha_2}\frac{\partial{G_{\rm PML}}}{\partial x_2} \right) + \alpha_1\alpha_2 k^2G_{\rm PML} = -\delta(x-y), \quad {x,y\in}   B_{\rm ex},\\
&[G_{\rm PML}] = 0,\quad [\partial_{x_2} G_{\rm PML}] = 0 \quad\mbox{ on }  {x_2=0},\\
&G_{\rm PML} = 0 \quad\mbox{ on } \partial B_{\rm ex}.
\end{cases}
\end{align}
The proof is by explicit construction of $G_{\rm PML}(x,y)$. Our idea is first
constructing the Green's function $G(x,y)$ for the waveguide problem
(\ref{eq:green:pmly}) where PMLs are placed above and below the interface
$\Gamma$ to terminate $x_2$, and then by placing periodic PMLs leftwards and
rightwards along $x_1$-direction and by introducing periodically distributed
source points we could construct $G_{\rm PML}$ by use of $G$ excited by those
source points. Subsections 3.1-3.4 are devoted to prove the existence of
$G(x,y)$; in Subsection 3.5, we explicitly construct $G_{\rm PML}$ by use of
$G$; in Subsection 3.6, we prove Theorem \ref{wellposethm} to address the
wellposedness of the UPML problem (\ref{eq:upml}). \cb{Throughout this section,
  we will assume that $k_j$, $\bar{\sigma}_j, d_j$, and $L_j$ are fixed positive
  constants as indicated in Theorem~\ref{wellposethm}; we emphasize that the
  generic constant $C$ used in notations $\lesssim$, $\gtrsim$, and $\eqsim$
  become dependent of those paramters within this section only.}

\subsection{Green's function for a waveguide problem}
The waveguide problem we consider is
\begin{align}\label{eq:green:pmly}
\begin{cases}
&\frac{\partial}{\partial x_1}\left( \alpha_2\frac{\partial G}{\partial x_1}
\right)+ \frac{\partial}{\partial x_2}\left( \frac{1}{\alpha_2}\frac{\partial
	G}{\partial x_2} \right) + \alpha_2 k^2G = -\delta(x-y),\mbox{ for }
x\in\mathbb{R}\times (-M_2,M_2),\\
&[G] = 0,\quad [\partial_{x_2} G] = 0,\quad{\rm on}\quad \Gamma,\\
&G = 0,\quad{\rm on}\quad x_2=\pm M_2.
\end{cases}
\end{align}
where $y=(y_1,y_2)$ denotes source point located in
{$\mathbb{R}\times (-M_2,M_2)$}. We require that the Green's function
for the waveguide problem satisfy the {Sommerfeld radiation
  condition}
\begin{equation}\label{eq:som}
\lim_{x_1\to\infty}\sqrt{|x_1-y_1|}\left(  \frac{\partial G}{\partial |x_1-y_1|} - ik(x) G\right) = 0,
\end{equation}
{We will first formally derive an explicit representation of $G$ in this subsection and then verify it does solve \eqref{eq:green:pmly}--\eqref{eq:som} in Subsections~\ref{subsec:A}--\ref{subsec:Gw}. L}et us formally take the Fourier transform of $G(x;y)$ along the variable $x_1$,
\begin{equation}\label{eq:def:hG}
\hat{G}(x_2;y_2,\xi) = \frac{1}{\sqrt{2\pi}}\int_{-\infty}^{\infty}G(x;y)e^{-\bi(x_1-y_1)\xi}dx_1.
\end{equation} 
For fixed $y_2\neq 0$ and $\xi$, $\hat{G}$ satisfies the equation
\begin{align}\label{eq:hG:pmly}
\begin{cases}
&\frac{d}{dx_2}\left(\frac{1}{\alpha_2}\frac{d \hat{G}}{d x_2} \right) + \alpha_2(k^2-\xi^2) \hat{G} =
-\frac{1}{\sqrt{2\pi}}\delta(x_2-y_2),\mbox{ for }
x_2\in (-M_2,M_2),\\
&[\hat{G}] = 0,\quad [\hat{G}'(x_2)] = 0,\quad{\rm on}\quad x_2=0,\\
&\hat{G} = 0,\quad{\rm on}\quad x_2 = \pm M_2.
\end{cases}
\end{align}
Let {$\Omega^1 = \mathbb{R}\times(0,M_2)$ and $\Omega^2 = \mathbb{R}\times (-M_2,0)$.}  Then,
by direct but tedious calculation, one gets the solution to problem
(\ref{eq:hG:pmly}) as follows: for $i=1,2$, if $x,y\in \Omega^{i}$, then
\begin{align}
\label{eq:hG:sol:++}
\hat{G}(x_2;y_2,\xi) =&\frac{B_1^{i}\bi}{2A\sqrt{2\pi}\mu_i}\left[  
e^{\bi\mu_i(4\tilde{M}_2 -\tilde{y}_2^+ - \tilde{x}_2^+)} -e^{\bi\mu_i(2\tilde{M}_2 -\tilde{y}_2^+ + \tilde{x}_2^+)}- e^{\bi\mu_i(2\tilde{M}_2 + \tilde{y}_2^+-\tilde{x}_2^+)}\right]\nonumber\\
&+\frac{\bi}{2\sqrt{2\pi}\mu_i}\left[\frac{B_2^{i}}{A(\mu_1+\mu_2)}+ \frac{\mu_i-\mu_{3-i}}{\mu_1+\mu_2}\right]e^{\bi \mu_i(\tilde{x}_2^++\tilde{y}_2^+)}\nonumber\\
&+ \frac{\bi}{2\sqrt{2\pi}\mu_i}\left[e^{\bi\mu_i(\tilde{x}_2-\tilde{y}_2)^+}-e^{\bi\mu_i(2\tilde{M}_2 - \tilde{y}_2^+-\tilde{x}_2^+)} \right], 
\end{align}
and if $x\in \Omega^{3-i}$ and $y\in \Omega^{i}$, then
\begin{align}
\hat{G}(x_2;y_2,\xi) =& \frac{\bi}{A\sqrt{2\pi}}\left[e^{\bi\left(\mu_i(2\tilde{M}_2 - \tilde{y}_2^+)+\mu_{3-i}(2\tilde{M}_2-\tilde{x}_2^+)\right)}- e^{\bi\left(\mu_i(2\tilde{M}_2-\tilde{y}_2^+)+\mu_{3-i}\tilde{x}_2^+\right)} -  e^{\bi\left(\mu_i\tilde{y}_2^++\mu_{3-i}(2\tilde{M}_2-\tilde{x}_2^-)\right)}\right]\nonumber\\
&+\frac{\bi}{\sqrt{2\pi}(\mu_1+\mu_2)}\left(1+ \frac{B}{A}\right)e^{\bi( \mu_i\tilde{y}_2^++\mu_{3-i}\tilde{x}_2^+ )},
\end{align}
where $\mu_i=\sqrt{k_i^2-\xi^2}$, $\epsilon_i=e^{2\bi \mu_i\tilde{M}_2}$, $\tilde{M}_2=\int_0^{M_2}\alpha_{2}(t)dt$ and 
\begin{align}
\label{eq:def:B+1}
B^i_1 &= (\mu_{i}-\mu_{3-i})- (\mu_1+\mu_2) \epsilon_{3-i},\\
\label{eq:def:B+2}
B^i_2 &=(\mu_{i}^2-\mu_{3-i}^2)\epsilon_1\epsilon_2-(\mu_1-\mu_2)^2\epsilon_{i} - 4\mu_1\mu_2\epsilon_{3-i},\\
\label{eq:def:B+3}
B &=(\mu_1+\mu_2)\epsilon_1\epsilon_2 - (\epsilon_1-\epsilon_2)(\mu_1-\mu_2),\\
\label{eq:def:A}
A &= (1-\epsilon_1\epsilon_2)(\mu_1+\mu_2)+(\epsilon_1-\epsilon_2)(\mu_1-\mu_2) =(1-\epsilon_2)(1+\epsilon_1)\mu_1 + (1-\epsilon_1)(1+\epsilon_2)\mu_2.
\end{align}
Recall that the free space Green's function for Helmholtz equation with wavenumber $k_1$ \cb{\cite{Chew90}} is
\[
\Phi(k_1,x,y):=\frac{\bi}{4} H_0^{(1)}(k_1|x-y|) = \frac{\bi}{4\pi}\int_{-\infty}^{+\infty} \frac{1}{\mu_1} e^{\bi(x_1-y_1)\xi + \bi \mu_1|x_2-y_2|}d\xi.
\]
Thus, taking the inverse Fourier transform of $\hat{G}$ with respect to $\xi$,
we obtain $G(x;y)$ that takes the following form: for $i=1,2$, 
\begin{align}
\label{eq:Green:sol:+}
G(x,y) =& \left\{
\begin{array}{lc}
G^{i,i}_{\rm layer}\left((x_1,\tilde{x}_2),(y_1,\tilde{y}_2)\right)+G_{\rm res}^{1,1}\left((x_1,\tilde{x}_2),(y_1,\tilde{y}_2)\right), & {\rm for }\quad x,y\in \Omega^i,\\
G^{3-i,i}_{\rm layer}\left((x_1,\tilde{x}_2),(y_1,\tilde{y}_2)\right)+G_{\rm res}^{3-i,i}\left((x_1,\tilde{x}_2),(y_1,\tilde{y}_2)\right), & {\rm for }\quad x\in \Omega^{3-i}, y \in \Omega^i,\\
\end{array}
\right.
\end{align} 
where
\begin{align}
\label{eq:Green:res:++}
G_{\rm res}^{i,i}=&- \Phi(k_i,(x_1,2\tilde{M}_2-\tilde{x}_2^+),(y_1,\tilde{y}_2)) +\frac{\bi}{4\pi}\int_{-\infty}^{+\infty}\frac{e^{\bi(x_1-y_1)\xi}}{A} f_{x_2,y_2}^{i,i}(\xi)d\xi,\\
\label{eq:Green:layer:++}
G_{\rm layer}^{i,i} =&\Phi(k_i, (x_1,\tilde{x}_2), (y_1,\tilde{y}_2)) + \Phi (k_i, (x_1, \tilde{x}_2),(y_1,-\tilde{y}_2))+\frac{\bi}{4\pi}\int_{-\infty}^{+\infty}e^{\bi(x_1-y_1)\xi}g_{x_2,y_2}^{3-i,i}(\xi)d\xi,\\
\label{eq:Green:res:-+}
G_{\rm res}^{3-i,i}=&\frac{\bi}{2\pi}\int_{-\infty}^{+\infty}\frac{e^{\bi(x_1-y_1)\xi}}{A}f_{x_2,y_2}^{3-i,i}(\xi) d\xi,\\
\label{eq:Green:layer:-+}
G_{\rm layer}^{3-i,i}=&\frac{\bi}{2\pi}\int_{-\infty}^{+\infty}e^{\bi(x_1-y_1)\xi}g_{x_2,y_2}^{3-i,i}(\xi)d\xi,
\end{align}
and
\begin{align}
\label{eq:f:++}
f_{x_2,y_2}^{i,i}(\xi)=&\frac{B_2^ie^{\bi \mu_i(\tilde{x}_2^++\tilde{y}_2^+)} }{\mu_i(\mu_1+\mu_2)}
+\frac{B_1^i}{\mu_i} \left(  e^{\bi\mu_i(4\tilde{M}_2 -\tilde{y}_2^+ - \tilde{x}_2^+)} -e^{\bi\mu_i(2\tilde{M}_2 -\tilde{y}_2^+ + \tilde{x}_2^+)}- e^{\bi\mu_i(2\tilde{M}_2 + \tilde{y}_2^+-\tilde{x}_2^+)}\right),\\ 
\label{eq:g:++}
g_{x_2,y_2}^{i,i}(\xi) =& \frac{2e^{\bi \mu_i(\tilde{x}_2^+ + \tilde{y}_2^+)}}{\mu_1+\mu_2},\\
\label{eq:f:-+}
f_{x_2,y_2}^{3-i,i}(\xi)=&\frac{Be^{\bi(\mu_i\tilde{y}_2^++\mu_{3-i}\tilde{x}_2^+)}}{\mu_1+\mu_2} + e^{\bi\left(\mu_{i}(2\tilde{M}_2 - \tilde{y}_2^+)+\mu_{3-i}(2\tilde{M}_2-\tilde{x}_2^+)\right)}
- e^{\bi\left(\mu_i(2\tilde{M}_2-\tilde{y}_2^+)+\mu_{3-i}\tilde{x}_2^+\right)} \nonumber\\&- e^{\bi\left(\mu_i\tilde{y}_2^++\mu_{3-i}(2\tilde{M}_2-\tilde{x}_2^+)\right)},\\
\label{eq:g:-+}
g_{x_2,y_2}^{3-i,i}(\xi) =& \frac{e^{\bi (\mu_i\tilde{y}_2^+ + \mu_{3-i}\tilde{x}_2^+)}}{\mu_1+\mu_2}.
\end{align}

Notice that $G^{i,j}_{\rm layer}$ represents the exact layered medium Green's
function $G_{k_1,k_2}(x,y)$ for $x\in \Omega^i$
and $y\in \Omega^j$ when $\bar{\sigma}_2=0$. In this sense, $G^{i,j}_{\rm res}$,
$i,j=1,2$, can be taken as the residual term for the layered medium Green's
function due to the horizontal PML truncation. As $G(x,y)$ is only formally
defined in \eqref{eq:Green:sol:+}, we need to verify that the integrals above,
including both $G^{i,j}_{\rm layer}$ and $G^{i,j}_{\rm res}$ are well-defined
when $\bar{\sigma}_2>0$. They all depend on the properties of {$A$ in
  \eqref{eq:def:A}  and $f_{x_2,y_2}^{i,j}$ and $g_{x_2,y_2}^{i,j}$ for $i,j=1,2$,}
which are studied in the following.
\subsection{Properties of $A$}\label{subsec:A}
We start by two technical lemmas.
\begin{mylemma}\label{techlemma1}
	For any $a>0$, the function
	\[
	F(x_1,x_2) = (1-e^{-2ax_1})(1-e^{-2x_2/a}) - 4e^{-ax_1-x_2/a}|\sin x_1\sin x_2|,
	\]
	defined on the domain $\{(x_1,x_2): x_1\geq 0, x_2\geq 0, x_1^2 + x_2^2\neq 0\}$ is always nonnegative, and $F(x_1,x_2)=0$ if and only if $x_1x_2=0$.
	\begin{proof}
		Clearly, by monotonicity and  periodicity, if $F(x_1,x_2)>0$, then $F(x_1+m\pi,x_2+n\pi)>0$ for any two integers $m,n\geq 0$. Therefore, we only need to study $F(x_1,x_2)$ in the domain:
		\[
		D_1=\{(x_1,x_2): 0\leq x_1,x_2\leq \pi, x_1^2 + x_2^2\neq 0\}.
		\]
		In fact, we can further reduce the domain $D_1$ to 
		\[
		D_2=\{(x_1,x_2):0\leq x_1,x_2\leq \pi/2, x_1^2 + x_2^2 \neq 0\},
		\]
		since $\sin(\pi-x_1)=\sin x_1$ and for any $(x_1,x_2)\in D_{1}\backslash D_2$,
		either $x_1\geq \pi - x_1$ or $x_2\geq \pi - x_2$.
		
		Now, we prove that if $x_1x_2\neq 0$, then	$F(x_1,x_2) >0\mbox{ in } D_{2}$.
		Since $\sin x_1 (1-e^{-2x_2/a})e^{-ax_2}>0$, it holds
		\begin{align}
		\label{eq:Bandf}
		\frac{ F(x_1,x_2) }{2\sin x_1(1-e^{-2x_2/a})e^{-ax_1}} = f(x_1;a) - \frac{1}{f(x_2;a^{-1})},
		\end{align}
		where $f(x_1;a)$ is defined by
		\[
		f(x_1;a) := \frac{1-e^{-2ax_1}}{2\sin(x_1)e^{-ax_1}},\quad
		x_1\in(0,\frac{\pi}{2}].
		\]
		Since $\lim_{x_1\to 0} f(x_1;a)=a$, we let $f(0;a)=a$ so that $f$
		is defined on $[0,\pi/2]$. We claim 
		\[
		f(x_1;a)>f(0;a),\quad{\rm for}\quad x_1\in(0,\frac{\pi}{2}],
		\]
		since one can easily check that for any $a>0$,
		\[
		(1-e^{-2ax_1}) - 2a\sin x_1e^{-ax_1}>0,\quad{\rm for}\quad
		x_1\in(0,\frac{\pi}{2}].
		\]
		Thus, we obtain
		\[
		f(x_1;a)>a,\quad f(x_2;a^{-1})>a^{-1},\quad{\rm for}\quad x_1x_2\neq 0,
		\]
		such that 
		\[
		f(x_1;a)-\frac{1}{f(x_2;a^{-1})}>a - \frac{1}{a^{-1}}=0.
		\]
		Consequently, equation \eqref{eq:Bandf} implies that $F(x_1,x_2)>0$,
		when $(x_1,x_2)\in D_{2}$ and $x_1x_2\neq 0$, which completes the proof since it is obvious that $F(x_1,x_2)=0$ if $x_1x_2= 0$.
	\end{proof}
\end{mylemma}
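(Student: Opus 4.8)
The plan is to reduce this two-variable inequality to a one-variable estimate by exploiting the $\pi$-periodicity and the reflection symmetry $\sin(\pi-t)=\sin t$ of $|\sin|$, combined with the monotonicity of the exponential weights; the leftover scalar estimate will then be settled by elementary bounds relating $\sinh$ and $\sin$. First I would dispose of the degenerate cases: if $x_1=0$ then both $1-e^{-2ax_1}$ and $\sin x_1$ vanish, so $F(0,x_2)=0$, and symmetrically $F(x_1,0)=0$, which already yields nonnegativity on the axes and one half of the ``if and only if''. If $x_1,x_2>0$ but $\sin x_1\sin x_2=0$ (say $\sin x_1=0$), then $F=(1-e^{-2ax_1})(1-e^{-2x_2/a})>0$ outright. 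So the genuine content is to prove $F(x_1,x_2)>0$ whenever $x_1,x_2>0$ and $\sin x_1\sin x_2\neq0$; note this forces $x_j\notin\pi\mathbb{Z}$.

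Next I would perform two domain reductions. Writing $x_j=x_j'+m_j\pi$ with $m_j\in\mathbb{Z}_{\ge0}$ and $x_j'\in(0,\pi)$, the identity $|\sin(x_j'+m_j\pi)|=|\sin x_j'|$ and the bound $e^{-c(x_j'+m_j\pi)}\le e^{-cx_j'}$ for $c>0$ make the positive product term in $F$ only larger and the subtracted term only smaller, so $F(x_1,x_2)\ge F(x_1',x_2')$; hence it suffices to work on $(0,\pi)^2$. Then, using $\sin(\pi-t)=\sin t$ and $\pi-t\le t$ for $t\in[\pi/2,\pi)$, the same monotonicity argument folds each variable into $(0,\pi/2]$, i.e. $F(x_1,x_2)\ge F(\min(x_1,\pi-x_1),\min(x_2,\pi-x_2))$. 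So I only need $F>0$ on $D_2:=(0,\pi/2]^2$.

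On $D_2$ every factor $\sin x_1,\ \sin x_2,\ 1-e^{-2ax_1},\ 1-e^{-2x_2/a}$ is strictly positive, so I would divide $F$ by the positive quantity $2\sin x_1\,(1-e^{-2x_2/a})\,e^{-ax_1}$. A short simplification should produce the decoupled identity
\[
\frac{F(x_1,x_2)}{2\sin x_1\,(1-e^{-2x_2/a})\,e^{-ax_1}}=f(x_1;a)-\frac{1}{f(x_2;a^{-1})},\qquad f(t;a):=\frac{1-e^{-2at}}{2\sin t\,e^{-at}},
\]
which separates the roles of $x_1$ and $x_2$. It then suffices to prove the single scalar inequality $f(t;a)>a$ for all $t\in(0,\pi/2]$ and all $a>0$: applying it to $(x_1,a)$ and to $(x_2,a^{-1})$ gives $f(x_1;a)-1/f(x_2;a^{-1})>a-a=0$, hence $F>0$. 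Rewriting $1-e^{-2at}=2e^{-at}\sinh(at)$, the claim $f(t;a)>a$ is equivalent to $\sinh(at)>a\sin t$, and this follows from the chain $\sinh(at)>at\ge a\sin t$, valid for every $t>0$ with the first inequality strict.

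The only step I expect to need real thought is the decoupling identity. The naive estimate $|\sin x_1\sin x_2|\le1$ is useless here, because $(1-p^2)(1-q^2)-4pq$ is negative for a whole range of $p,q\in(0,1)$ (e.g. $p=q$ with $p^2$ slightly above $3-2\sqrt2$); one is therefore forced first to confine both variables to $[0,\pi/2]$ via periodicity and monotonicity, and only then can the precise algebraic form of $F$ be used to split it into a difference of a function of $x_1$ and a function of $x_2$. Once that identity is in place, the residual scalar bound $\sinh(at)\ge at\ge a\sin t$ is routine, and the equality characterization follows by tracking where these inequalities are strict.
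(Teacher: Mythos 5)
Your proposal is correct and follows essentially the same route as the paper: reduce to $[0,\pi/2]^2$ by $\pi$-periodicity of $|\sin|$ and monotonicity of the exponential weights, divide $F$ by the positive factor $2\sin x_1\,(1-e^{-2x_2/a})\,e^{-ax_1}$ to obtain the decoupled identity $f(x_1;a)-1/f(x_2;a^{-1})$, and finish with the scalar bound $f(t;a)>a$. The one place you improve on the paper's exposition is the last step: where the paper merely asserts that $(1-e^{-2at})-2a\sin t\,e^{-at}>0$ ``can be easily checked'', you rewrite it as $\sinh(at)>a\sin t$ and give the transparent chain $\sinh(at)>at\ge a\sin t$, which makes that assertion fully rigorous.
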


\begin{mylemma}\label{techlemma2}
	Suppose $a, b>0$, then 
	\[
	\left| \frac{e^{\bi \mu_j (a+\bi b)} - 1}{\mu_j} \right|\leq
	\frac{4+2\sqrt{k_2^2-k_1^2}|a+\bi b|}{|\mu_1+\mu_2|},\quad j=1,2,
	\]
	for all $\xi\in\ol{\mathbb{C}^{-+}\cup\mathbb{C}^{+-}}$, where we take the limit value for the left part when	$\xi=\pm k_1, \pm k_2$.
	\begin{proof} 
		By elementary analysis, we see that for any $c\geq 0$ and $d\in\mathbb{R}$,
    \begin{equation}
      \label{eq:tmp}
		\left| \frac{e^{-c+\bi d} - 1}{-c + \bi d} \right|\leq 2,
    \end{equation}
		where limit is considered when $c^2+ d^2=0$.
		Hence, it yields by (\ref{eq:tmp})
		\begin{align*}
		\left|  \frac{( e^{\bi \mu_j (a+\bi b)} - 1 )
			(\mu_1+\mu_2)}{\mu_j}\right|
		&\leq 2|e^{\bi \mu_j (a+\bi b)} - 1| +
		|\mu_1-\mu_2|\left| \frac{e^{\bi \mu_j(a+\bi b)} - 1}{\mu_j} \right|\\
		&\leq 4 + 2\sqrt{k_2^2-k_1^2}|a+\bi b|,
		\end{align*}
    {where we have used the fact $|\mu_1-\mu_2|\leq \sqrt{k_2^2-k_1^2}$ in the
    last inequality.}
	\end{proof}
\end{mylemma}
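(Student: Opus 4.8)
The plan is to reduce the estimate to the elementary bound~(\ref{eq:tmp}) together with the location of $\mu_1$ and $\mu_2$ in the complex plane. The first and most essential step is to establish that, for every $\xi\in\ol{\mathbb{C}^{-+}\cup\mathbb{C}^{+-}}$, both $\mu_i=\sqrt{k_i^2-\xi^2}$ lie in the closed first quadrant, i.e.\ ${\rm Re}(\mu_i)\ge 0$ and ${\rm Im}(\mu_i)\ge 0$. Indeed, when ${\rm Re}(\xi)\,{\rm Im}(\xi)\le 0$ one has ${\rm Im}(\xi^2)\le 0$, so $k_i^2-\xi^2$ belongs to the closed upper half plane, and the branch of $\sqrt{\cdot}$ with nonnegative imaginary part maps the closed upper half plane into the closed first quadrant. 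Since $a,b>0$, this gives ${\rm Re}\bigl(\bi\mu_j(a+\bi b)\bigr)=-\bigl(b\,{\rm Re}(\mu_j)+a\,{\rm Im}(\mu_j)\bigr)\le 0$, hence $|e^{\bi\mu_j(a+\bi b)}|\le 1$, and moreover $\bi\mu_j(a+\bi b)$ is of the form $-c+\bi d$ with $c\ge 0$, so that~(\ref{eq:tmp}) applies to it.

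Granting~(\ref{eq:tmp}) --- which itself is seen by distinguishing the case $|{-c+\bi d}|\ge 1$, where $|e^{-c+\bi d}-1|\le|e^{-c+\bi d}|+1\le 2$, from the case $|{-c+\bi d}|<1$, where the power series of $e^z-1$ yields $|e^z-1|\le(e-1)|z|<2|z|$ --- I would rescale by $|a+\bi b|$ to obtain
\[
\left|\frac{e^{\bi\mu_j(a+\bi b)}-1}{\mu_j}\right|=|a+\bi b|\left|\frac{e^{\bi\mu_j(a+\bi b)}-1}{\bi\mu_j(a+\bi b)}\right|\le 2\,|a+\bi b|.
\]
Using the identity $\mu_1+\mu_2=2\mu_j+(\mu_{3-j}-\mu_j)$, I would then split
\[
\frac{\bigl(e^{\bi\mu_j(a+\bi b)}-1\bigr)(\mu_1+\mu_2)}{\mu_j}=2\bigl(e^{\bi\mu_j(a+\bi b)}-1\bigr)+\frac{(\mu_{3-j}-\mu_j)\bigl(e^{\bi\mu_j(a+\bi b)}-1\bigr)}{\mu_j},
\]
and bound the two pieces by $4$ (using $|e^{\bi\mu_j(a+\bi b)}|\le 1$) and by $|\mu_1-\mu_2|\cdot 2|a+\bi b|$ (using the displayed bound above), respectively. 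Finally, from $\mu_1^2-\mu_2^2=k_1^2-k_2^2$ I get $|\mu_1-\mu_2|=(k_2^2-k_1^2)/|\mu_1+\mu_2|$, while $|\mu_1+\mu_2|^2\ge|\mu_1|^2+|\mu_2|^2\ge|\mu_1^2-\mu_2^2|=k_2^2-k_1^2$ (again because $\mu_1,\mu_2$ lie in the closed first quadrant, so that ${\rm Re}(\mu_1\ol{\mu_2})\ge 0$), hence $|\mu_1-\mu_2|\le\sqrt{k_2^2-k_1^2}$ and, incidentally, $\mu_1+\mu_2\ne 0$. Dividing through by $|\mu_1+\mu_2|$ gives the claim; the limiting values at $\xi=\pm k_1,\pm k_2$, where one $\mu_i$ vanishes, follow from the continuity of both sides once the removable singularity on the left is filled in.

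The only genuine subtlety I anticipate is the quadrant statement of the first paragraph: both $|e^{\bi\mu_j(a+\bi b)}|\le 1$ and the applicability of~(\ref{eq:tmp}) rest on knowing the exact signs of the real and imaginary parts of $\mu_1$ and $\mu_2$ on $\ol{\mathbb{C}^{-+}\cup\mathbb{C}^{+-}}$, which depends on the chosen branch of the square root; everything afterwards is routine estimation with the triangle inequality.
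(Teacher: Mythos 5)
Your proposal is correct and follows the same approach as the paper: you decompose $\mu_1+\mu_2=2\mu_j+(\mu_{3-j}-\mu_j)$, invoke the elementary bound~\eqref{eq:tmp} on the term with the removable singularity, and use $|\mu_1-\mu_2|\le\sqrt{k_2^2-k_1^2}$. The only difference is that you spell out the justifications the paper leaves implicit --- that $\mu_1,\mu_2$ lie in the closed first quadrant for $\xi\in\ol{\mCmp\cup\mCpm}$, the proof of~\eqref{eq:tmp}, and the derivation of $|\mu_1-\mu_2|\le\sqrt{k_2^2-k_1^2}$ from $|\mu_1+\mu_2|\ge\sqrt{k_2^2-k_1^2}$ --- all of which are accurate.
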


To study the property of $A$ in \eqref{eq:def:A}, we first note that $A$ can be regarded as a function of $\mu_j, j=1,2$, or a function of $\xi$. Due to the relations among $\xi$, $\mu_1$, and
$\mu_2$, these notations are all equivalent. In the following, to show the property of $A$, we may use all the three notations: $A(\mu_j)$,  $j=1,2$, or $A(\xi)$, depending on which one is more convenient than the others.

\begin{mylemma}\label{lemA1}
	$A(\xi)$ has a total of four roots on the real axis, namely,
	\[
	\xi = \pm k_1,\quad \xi = \pm k_2.
	\]
	Furthermore, $A(\xi)\ne 0$ for $\xi$ on the imaginary axis.
	\begin{proof}
		We first prove the case when $\xi\in \mathbb{R}$.
		Since $A(\xi)=A(-\xi)$, we consider $\xi>0$ only. 
		We claim that $A$ doesn't have any root for
		$\xi\in[0,k_1)\cup(k_2,\infty)$. Otherwise, suppose $A(\xi_1)=0$ for
		$\xi_1<k_1$  and $A(\xi_2)=0$ for $\xi_2>k_2$. When $\xi\ne k_j$,  $j=1,2$, since $\epsilon_j(\xi)\ne 1$, it holds
		\begin{align}
		\label{eq:rel0}
		\frac{A}{(1-\epsilon_1)(1-\epsilon_2)}=\frac{1+\epsilon_1(\xi_j)}{1-\epsilon_1(\xi_j)}\mu_1(\xi_j) + 
		\frac{1+\epsilon_2(\xi_j)}{1-\epsilon_2(\xi_j)}\mu_2(\xi_j)=0,
		\end{align}
		which can be rewritten as
		\begin{align}
		\label{eq:rel1}
		\frac{(1-|\epsilon_1(\xi_j)|^2) + 2{\rm
				Im}(\epsilon_1(\xi_j))\bi}{|1-\epsilon_1(\xi_j)|^2}\mu_1(\xi_j) + 
		\frac{(1-|\epsilon_2(\xi_j)|^2) + 2{\rm
				Im}(\epsilon_2(\xi_j))\bi}{|1-\epsilon_2(\xi_j)|^2}\mu_2(\xi_j)=0
		\end{align}
		for $j=1,2$. Since $\mu_2(\xi_1)>\mu_1(\xi_1)>0$ and 
		$\mu_1(\xi_2)(-\bi)>\mu_2(\xi_2)(-\bi)>0$, the real part of
		left hand side(l.h.s.) of \eqref{eq:rel1} for $j=1$ is strictly positive and the imaginary
		part of l.h.s. of (\ref{eq:rel1}) for $j=2$ is strictly positive, which 
		is impossible.
		
		Now, we claim that there is no root in $(k_1,k_2)$. Otherwise, suppose
		$A(\xi_*)=0$ for $\xi_*\in(k_1,k_2)$. Then,
		$\mu_1 = \sqrt{\xi_*^2-k_1^2}\bi$ and $\mu_2 = \sqrt{k_2^2-\xi_*^2}$.
		Denote $c=\sqrt{\xi_*^2 - k_1^2}>0$ and $d=\sqrt{k_2^2-\xi_*^2}>0$. Then,
		\[
		0=A(\xi_*) = (1-\epsilon_2)(1+\epsilon_1)c\bi +
		(1-\epsilon_1)(1+\epsilon_2)d = (1-\epsilon_1\epsilon_2)(c\bi+d) +
		(\epsilon_1-\epsilon_2)(c\bi -d).
		\]
		Therefore, $|1-\epsilon_1\epsilon_2|^2 = |\epsilon_1-\epsilon_2|^2$, which
    is equivalent to
		\begin{align}
		\label{eq:rel2}
		(1-|\epsilon_1|^2)(1-|\epsilon_2|^2) + 4{\rm Im}(\epsilon_1){\rm
			Im}(\epsilon_2) = 0.
		\end{align}
		Note that 
		$\epsilon_1 = e^{-2cM_2}e^{-2c\bar{\sigma}_2\bi}$ and $\epsilon_2 = e^{-2d\bar{\sigma}_2}e^{2dM_2\bi}$.
		Equation \eqref{eq:rel2} becomes
		\begin{equation}
		\label{eq:rel3}
		(1 - e^{-4cM_2})(1-e^{-4d\bar{\sigma}_2}) - 4 e^{-2cM_2-2d\bar{\sigma}_2}\sin(2c\bar{\sigma})\sin(2dM_2)=0.
		\end{equation}
		Now, by choosing $a=\frac{M_2}{\bar{\sigma}_2}>0$, $x=2c\bar{\sigma}_2\geq 0$,
		and $y=2dM_2\geq 0$ in Lemma \ref{techlemma1}, we see that
		\[
		(1 - e^{-4cM_2})(1-e^{-4d\bar{\sigma}_2}) - 4
		e^{-2cM_2-2d\bar{\sigma}}\sin(2c\bar{\sigma}_2)\sin(2dM_2)\geq 0,
		\]
		where the equality holds only when $cd=0$, which is a contradiction to the
		choice of $\xi_*$. 
		
		Finally, since $\mu_1,\mu_2>0$ when $\xi$ is on the imaginary axis, 
		the real part of l.h.s of equation ~(\ref{eq:rel1}) is strictly positive, 
		which implies that $A(\xi)\neq 0$. 
	\end{proof}
\end{mylemma}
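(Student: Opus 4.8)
The plan is to use the symmetry $A(\xi)=A(-\xi)$ (which holds because $\mu_j=\sqrt{k_j^2-\xi^2}$, hence $\epsilon_j$, depends only on $\xi^2$) to reduce the analysis to the nonnegative real ray and the nonnegative imaginary ray, and then to treat separately the regimes $\xi\in[0,k_1)$, $\xi=k_1$, $\xi\in(k_1,k_2)$, $\xi=k_2$, and $\xi\in(k_2,\infty)$, together with $\xi=\bi t$, $t\ge 0$. The endpoints are immediate: at $\xi=k_j$ one has $\mu_j=0$ and $\epsilon_j=1$, so the factored representation $A=(1-\epsilon_2)(1+\epsilon_1)\mu_1+(1-\epsilon_1)(1+\epsilon_2)\mu_2$ in \eqref{eq:def:A} makes $A$ vanish; thus $\pm k_1,\pm k_2$ are roots, and everything else amounts to showing there are no other zeros on the two axes.

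For $\xi\in[0,k_1)$ both $\mu_1,\mu_2$ are real and positive, hence $|\epsilon_j|=e^{-2\mu_j\bar\sigma_2}<1$. Writing $\frac{1+\epsilon_j}{1-\epsilon_j}=\frac{(1-|\epsilon_j|^2)+2\bi\,\mathrm{Im}(\epsilon_j)}{|1-\epsilon_j|^2}$, the real part of $\frac{A}{(1-\epsilon_1)(1-\epsilon_2)}=\frac{1+\epsilon_1}{1-\epsilon_1}\mu_1+\frac{1+\epsilon_2}{1-\epsilon_2}\mu_2$ is a strictly positive combination of $\mu_1,\mu_2$, so $A\ne 0$. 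The same computation, verbatim, handles $\xi$ on the imaginary axis, where $\mu_j=\sqrt{k_j^2+|\xi|^2}>0$ again. For $\xi\in(k_2,\infty)$ the roles of real and imaginary parts swap: now $\mu_j=\bi\sqrt{\xi^2-k_j^2}$ is purely imaginary with positive imaginary part and $|\epsilon_j|<1$ still holds, so the \emph{imaginary} part of $\frac{1+\epsilon_1}{1-\epsilon_1}\mu_1+\frac{1+\epsilon_2}{1-\epsilon_2}\mu_2$ is strictly positive, again forcing $A\ne 0$. This disposes of $[0,k_1)\cup(k_2,\infty)$ and of the entire imaginary axis.

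The delicate case is $\xi\in(k_1,k_2)$, where $\mu_1=\bi c$ with $c=\sqrt{\xi^2-k_1^2}>0$ and $\mu_2=d=\sqrt{k_2^2-\xi^2}>0$. Using the other form $A=(1-\epsilon_1\epsilon_2)(\bi c+d)+(\epsilon_1-\epsilon_2)(\bi c-d)$ from \eqref{eq:def:A} and the key observation $|\bi c+d|=|\bi c-d|=\sqrt{c^2+d^2}$, the equation $A=0$ forces $|1-\epsilon_1\epsilon_2|=|\epsilon_1-\epsilon_2|$; expanding the squared moduli and substituting $\epsilon_1=e^{-2cM_2}e^{-2\bi c\bar\sigma_2}$, $\epsilon_2=e^{-2d\bar\sigma_2}e^{2\bi dM_2}$ collapses this to
\[
(1-e^{-4cM_2})(1-e^{-4d\bar\sigma_2})-4e^{-2cM_2-2d\bar\sigma_2}\sin(2c\bar\sigma_2)\sin(2dM_2)=0 .
\]
Here Lemma~\ref{techlemma1} does the work: with $a=M_2/\bar\sigma_2$, $x_1=2c\bar\sigma_2$, $x_2=2dM_2$, its conclusion $F(x_1,x_2)\ge 0$ with equality only when $x_1x_2=0$ shows that the left-hand side above is bounded below by $F(x_1,x_2)>0$ since $c,d>0$, a contradiction. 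I expect this last step to be the crux: recognizing that the modulus identity for the coefficient factors $\bi c\pm d$ turns $A=0$ into exactly the trigonometric inequality packaged in Lemma~\ref{techlemma1}, so that no root can lie strictly between $k_1$ and $k_2$ no matter how the absorbing strength $\bar\sigma_2$ is chosen. The remaining cases are routine sign computations.
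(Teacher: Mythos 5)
Your proposal is correct and follows essentially the same route as the paper: symmetry reduction to $\xi\ge 0$, strict positivity of the real (resp.\ imaginary) part of $A/((1-\epsilon_1)(1-\epsilon_2))$ on $[0,k_1)$ and the imaginary axis (resp.\ on $(k_2,\infty)$), and the modulus identity $|\bi c+d|=|\bi c-d|$ to collapse $A(\xi_*)=0$ on $(k_1,k_2)$ into the trigonometric inequality of Lemma~\ref{techlemma1}. You are a bit more explicit than the paper in two small spots — explicitly checking that $\pm k_1,\pm k_2$ are in fact roots via $\mu_j=0$ and $\epsilon_j=1$, and spelling out the $|\bi c\pm d|$ identity that justifies passing from $A=0$ to $|1-\epsilon_1\epsilon_2|=|\epsilon_1-\epsilon_2|$ — but the underlying argument is identical.
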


Another important property of $A$ is given by the following proposition.
\begin{prop} \label{PropA}
	The function $A(\xi)$ is not zero everywhere in $\mathbb{C}^{-+}\cup \mathbb{C}^{+-}$.

	\begin{proof}
		Since $A(-\xi)=A(\xi)$, in the following, we will assume that
		$\xi\in\mathbb{C}^{-+}$, which implies $\mu_j\in \mathbb{C}^{++}$
		and $|\epsilon_j|<1$, for $j=1,2$. Hence, the function
		$f(\mu_1) = \frac{A(\mu_1)}{(1-\epsilon_1)(1-\epsilon_2)}$,
		with $\mu_2 = \sqrt{k_2^2-k_1^2 + \mu_1^2}$, is holomorphic in
		$\mathbb{C}^{++}$. We now show that $f(\mu_1)\neq 0$ for
		$\mu_1\in\mathbb{C}^{++}$.
		
		Lemma \ref{lemA1} indicates that on the boundary of $\mathbb{C}^{++}$, $f(\mu_1)$
		has only two roots, i.e., $0$ and $\sqrt{k_2^2-k_1^2}\bi$. For
		sufficiently small $\varepsilon>0$ and for sufficently large $r>0$,
we define the counter-clockwise oriented closed curve $C_\varepsilon^r$, \cb{as
  shown in Figure~\ref{fig:integralcurve}},
		\begin{figure}[!ht]
			\centering
			\includegraphics[width=0.4\textwidth]{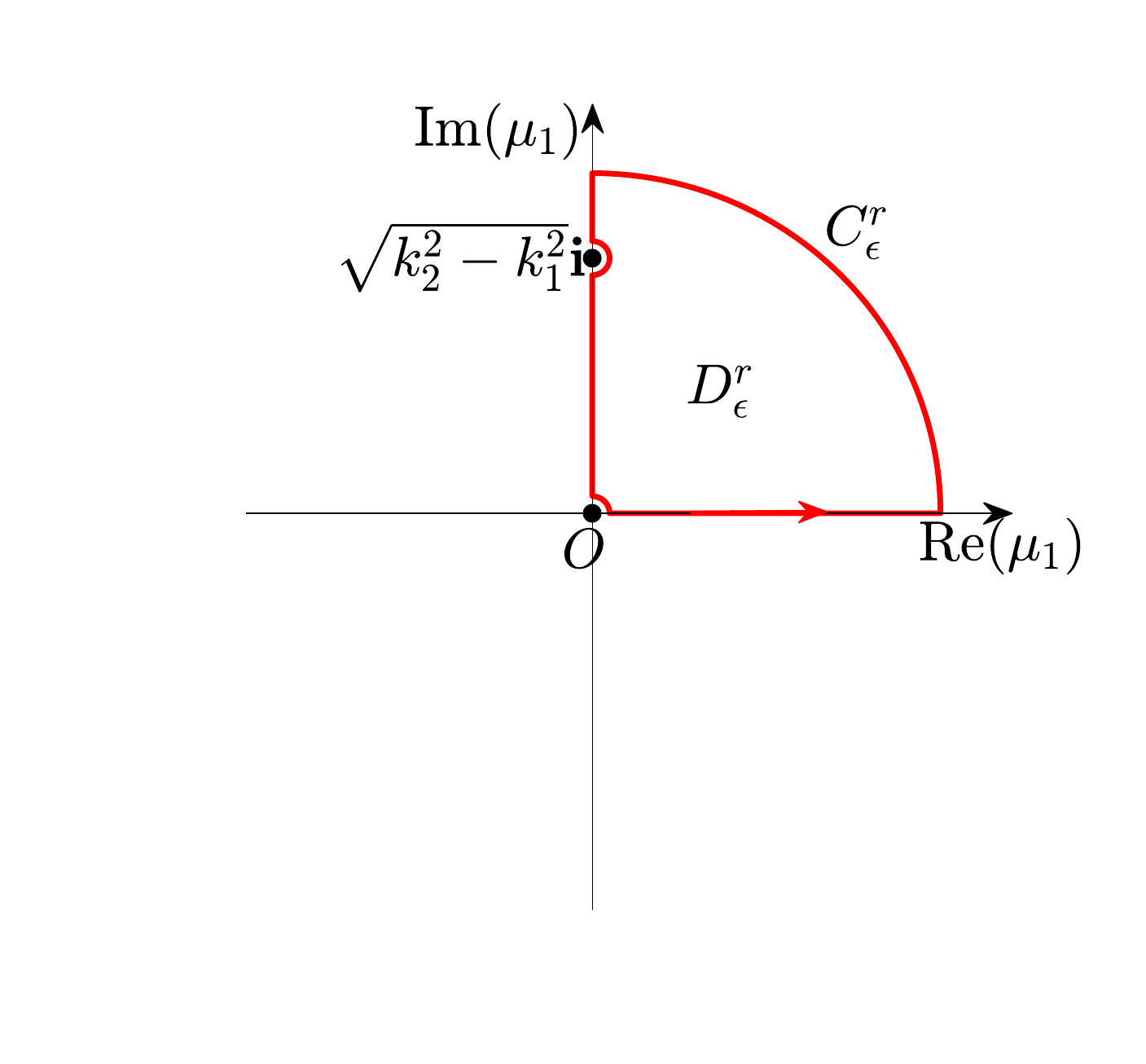}
			\vspace{-1cm}
			\caption{\cb{The integral contour $C_\varepsilon^r$}.}
			\label{fig:integralcurve}
		\end{figure}
    by the boundary of the region $D_\varepsilon^r=D_r\backslash
    (D_1^{\varepsilon}\cup D_2^{\varepsilon})$, and the regions
    $D_1^{\varepsilon}$, $D_2^{\varepsilon}$ and $D_r$ are given by
		\begin{align*}
		\begin{cases}
		D_1^{\varepsilon} = & \{ae^{i\theta}\in\mathbb{C}: 0<a\leq \varepsilon,0\leq \theta\leq \frac{\pi}{4}\},\\
		D_2^{\varepsilon} = & \{\sqrt{k_2^2-k_1^2}\bi + ae^{i\theta}\in\mathbb{C}: 0<a\leq \varepsilon,-\frac{\pi}{2}\leq \theta\leq \frac{\pi}{2}\}, \\
		D_r= &\{ae^{i\theta}\in\mathbb{C}:0\leq a\leq r,0\leq
		\theta\leq \frac{\pi}{4}\}.
		\end{cases}
		\end{align*}
		We show that for sufficiently large $r$, $f(\mu_1)$ must have at most
		finite number of zeros in $D_r$. In fact, when $|\mu_1|=r$ and
		$r\to\infty$, we notice
		\[
		|\mu_1-\mu_2| = |\frac{k_2^2-k_1^2}{\mu_1+\mu_2}|\leq
		\frac{k_2^2-k_1^2}{|\mu_1|}\to 0,\quad |\epsilon_j|= |e^{\bi
			re^{\bi\theta}(M_2+\bi\bar{\sigma})}| =
		e^{-r(M_2\sin\theta+\bar{\sigma}\cos\theta)} \to 0,
		\] 
		which implies
		\begin{align}
		\label{eq:A:infty}
		\lim_{r\to\infty}|f(\mu_1)|= 
		\lim_{r\to\infty}|(1-\epsilon_1\epsilon_2)(\mu_1+\mu_2) +
		(\epsilon_1-\epsilon_2)(\mu_1-\mu_2)|=\lim_{r\to\infty}|2\mu_1|=\infty.
		\end{align}
		Therefore, $f(\mu_1)$ cannot be zero outside $D_r$ for sufficiently
		large $r$. Now, suppose there is a sequence $\{\mu_{1,n}\}_{n=1}^{\infty}$
		such that $f(\mu_{1,n})=0$ and $\lim_{n\to\infty}\mu_{1,n}=\mu_{1,*}\in
		D_r$. Then, $\mu_{1,*}$ must be on the boundary of $D_r$, since otherwise
		the analyticity indicates that $f(\mu_1)\equiv 0$ everywhere inside
		$D_r$, which is a contradiction. As $\mu_{1,*}$ cannot be on the boundary of
		$D_r$ with $|\mu_1|=r$, it must be on the real or imaginary axis. By Lemma \ref{lemA1},
		it leads us to the following two situations: $\mu_{1,*}=0$ or
		$\mu_{1,*}=\sqrt{k_2^2-k_1^2}\bi$. If $\mu_{1,*}=0$, then
		\begin{align*}
      0=\lim_{n\to\infty}{\rm Re} f(\mu_{1,n})
		=\frac{2\bar{\sigma}_2}{|\tilde{M}_2|^2} + \frac{1-e^{-4\sqrt{k_2^2-k_1^2}\bar{\sigma}_2}}{|1-e^{2\bi\sqrt{k_2^2-k_1^2}\tilde{M}_2}|^2}\sqrt{k_2^2-k_1^2}>0,
		\end{align*}
		a contradiction; one similarly shows the impossibility of
    the other case. 
		
		Thus, we can choose sufficiently small $\varepsilon$ and sufficiently large
    $r$ so that all the zeros of $f(\mu_1)$ are contained inside the curve
    $C_\varepsilon^r$. By the argument principle, the total number of zeros
    equals to
		\begin{align}
		\label{eq:root:integral2}
		{\frac{1}{2\pi\bi}\int_{C_\varepsilon^r} \frac{f'(\mu_1)}{f(\mu_1)} d\mu_1, }
		\end{align}
		which evaluates the total change in the argument of $f(\mu_1)$ as $\mu_1$
		travels around $C_\varepsilon^r$.
		
		By (\ref{eq:rel0}) and (\ref{eq:rel1}), we see that
		\begin{equation}
		\label{eq:f:exp}
		f(\mu_1) = \frac{(1-|\epsilon_1|^2) + 2{\rm
				Im}(\epsilon_1)\bi}{1+|\epsilon_1|^2-2{\rm Re}(\epsilon_1)}\mu_1 + \frac{(1-|\epsilon_2|^2) + 2{\rm
				Im}(\epsilon_2)\bi}{1+|\epsilon_2|^2-2{\rm Re}(\epsilon_2)}\mu_2.
		\end{equation}
		We now analyze the change of argument of $f(\mu_1)$ on $C_\varepsilon^r$ part by part.
		\begin{enumerate}
			\item On the real-axis part of $C_\varepsilon^r$, since $\mu_2$ is also
			real, which implies ${\rm Re}(f(\mu_1))>0$, we have $f(\mu_1)$ lies in the right half of the	complex plane. 
			\item When $\mu_1=re^{i\theta}, 0\leq \theta\leq \frac{\pi}{2}$, since as
			$r\rightarrow\infty$, $f(\mu_1)\sim 2\mu_1$. Therefore, $f(\mu_1)$ is
			dominated by a nonzero complex number in
			$\overline{\mathbb{C}^{++}}\backslash\{0\}$ and hence cannot lie in
			$\overline{\mathbb{C}^{--}}$.
			\item When $\mu_1 = y\bi$ for $[\sqrt{k_2^2-k_1^2}+\varepsilon, r]$,
			since $\mu_1^2<k_2^2-k_1^2$, $\mu_2=\sqrt{y^2 - k_2^2+k_1^2}\bi$ such that
			${\rm Im}(f(\mu_1))>0$ so that $f(\mu_1)$ lies in the upper half complex plane.
			\item When $\mu_1\in\partial D_1^{\varepsilon}=\{\varepsilon e^{i\theta}:0\leq
			\theta\leq \frac{\pi}{2}\}$, the boundary of $D_1^\varepsilon$, since
			$\lim_{\varepsilon\to 0^+}{\rm Re} f(\mu_1)={\rm Re}f(0)>0$. we can make
			$\varepsilon$ sufficiently small such that, $f(\mu_1)$ lies in the right
			half plane with a positive real part.
			\item On the boundary of $D_2^\varepsilon$, one similarly derives that $f(\mu_1)$
			lies in the upper half plane since $\lim_{\varepsilon\to 0^+}{\rm
				Im}f(\mu_1)>0$.
			\item On the line segment $\mu_1=y\bi$ for $y\in[\varepsilon,
			\sqrt{k_2^2-k_1^2}-\varepsilon]$, $\mu_2 = \sqrt{k_2^2-k_1^2 + y^2}$, it holds that
			\begin{align}
			\label{eq:f:exp2}
			f(\mu_1) =& \frac{-2e^{-2yM_2}\sin(2y\bar{\sigma}_2)y}{1+e^{-4yM_2}-2e^{-2yM_2}\cos(2y\bar{\sigma}_2)} +\frac{(1-e^{-4\mu_2\bar{\sigma}_2})\mu_2}{1+e^{-4\mu_2\bar{\sigma}_2}-2e^{-2\mu_2\bar{\sigma}_2}\cos(2\mu_2M_2)} \nonumber\\
			&+ \left(  \frac{-2e^{-2\mu_2\bar{\sigma}_2}\sin(2\mu_2M_2)\mu_2}{1+e^{-4\mu_2\bar{\sigma}_2}-2e^{-2\mu_2\bar{\sigma}_2}\cos(2\mu_2 M_2)} +\frac{(1-e^{-4yM_2})y}{1+e^{-4yM_2}-2e^{2yM_2}\cos(2y\bar{\sigma}_2)}\right)\bi,
			\end{align}
			We claim $f(\mu_1)$ cannot lie in $\overline{\mathbb{C}^{--}}$. Otherwise,
			\[
			(1-e^{-4\mu_2\bar{\sigma}_2})\mu_2\cdot (1-e^{-4yM_2})y\leq
			2e^{-2yM_2}|\sin(2y\bar{\sigma}_2)|y\cdot
			2e^{-2\mu_2\bar{\sigma}_2}|\sin(2\mu_2 M_2)|\mu_2,
			\]
			which is equaivalent to 
			\[
			(1-e^{-4\mu_2\bar{\sigma}_2})(1-e^{-4yM_2})-4e^{-2(yM_2+\mu_2\bar{\sigma}_2)}|\sin(2y\bar{\sigma}_2)||\sin(2\mu_2M_2)| \leq 0,
			\]
			but this is impossible since Lemma \ref{techlemma1} indicates that the l.h.s is strictly
			positive when $y>0$.
		\end{enumerate}
		
		To sum up, we have shown that $f(\mu_1)$ on $C_\varepsilon^r$ cannot
		attain the region $\overline{\mathbb{C}^{--}}$ nor origin $O$ certainly.
		Thus, the total change of argument of $f$ as $\mu_1$ travels around
		$C_\varepsilon^r$ must be $0$.
		Consequently, by the argument principle and by (\ref{eq:A:infty}),
		$f(\mu_1)$ has no root in $\mathbb{C}^{++}$, which
		completes the proof since it implies $A(\xi)$ can not be zero in $\mathbb{C}^{-+}$.
	\end{proof}
\end{prop}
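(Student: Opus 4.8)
The plan is to use the symmetry $A(-\xi)=A(\xi)$ to reduce to $\mCmp$, change variables from $\xi$ to $\mu_1$, and then rule out zeros by an argument-principle computation. For $\xi\in\mCmp$ one checks that $k_j^2-\xi^2$ lies in the open upper half plane, so $\mu_j=\sqrt{k_j^2-\xi^2}\in\mCpp$, and since $\tilde M_2=M_2+\bi\bar\sigma_2$, $|\epsilon_j|=e^{-2M_2\,\mathrm{Im}\,\mu_j-2\bar\sigma_2\,\mathrm{Re}\,\mu_j}<1$. Because $1-\epsilon_1$ and $1-\epsilon_2$ never vanish when $|\epsilon_j|<1$, a zero of $A$ in $\mCmp$ would force a zero of
\[
f(\mu_1):=\frac{A}{(1-\epsilon_1)(1-\epsilon_2)}=\frac{1+\epsilon_1}{1-\epsilon_1}\,\mu_1+\frac{1+\epsilon_2}{1-\epsilon_2}\,\mu_2,\qquad \mu_2=\sqrt{k_2^2-k_1^2+\mu_1^2},
\]
which is holomorphic on $\mCpp$; hence it suffices to show $f$ has no zero there. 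A handy identity is $\frac{1+\epsilon}{1-\epsilon}=\frac{(1-|\epsilon|^2)+2\bi\,\mathrm{Im}\,\epsilon}{|1-\epsilon|^2}$, whose real part is positive whenever $|\epsilon|<1$; this positivity is in the spirit of Lemma~\ref{techlemma1}.

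Next I would localize the zeros of $f$. As $|\mu_1|\to\infty$ in $\mCpp$, $\mu_1+\mu_2\to\infty$ and $\epsilon_j\to0$, so $f(\mu_1)\sim2\mu_1\to\infty$ and the zeros are confined to a bounded set. On the two boundary rays of $\mCpp$, Lemma~\ref{lemA1} shows the only possible zeros of $A$ occur at $\mu_1=0$ (i.e.\ $\xi=\pm k_1$) and $\mu_1=\sqrt{k_2^2-k_1^2}\,\bi$ (i.e.\ $\xi=\pm k_2$); a short explicit evaluation of the limiting values of $f$ there gives $\mathrm{Re}\,f>0$ near the first point and $\mathrm{Im}\,f>0$ near the second, so $f$ does not vanish there and, by analyticity, no zeros accumulate on the boundary. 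Consequently, for $r$ large and $\varepsilon$ small, every zero of $f$ in $\mCpp$ is enclosed by the positively oriented contour $C_\varepsilon^r$ that bounds $\{|\mu_1|\le r\}\cap\mCpp$ with tiny circular arcs deleted around $\mu_1=0$ and $\mu_1=\sqrt{k_2^2-k_1^2}\,\bi$ (Figure~\ref{fig:integralcurve}), and the number of such zeros equals $\frac{1}{2\pi\bi}\oint_{C_\varepsilon^r}\frac{f'}{f}\,d\mu_1$.

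The core step is to show this integral vanishes, by following $\arg f$ along $C_\varepsilon^r$ and verifying that $f$ never enters the closed third quadrant $\overline{\mCmm}$. On the real segment both $\mu_1,\mu_2$ are positive, so $\mathrm{Re}\,f>0$; on the large arc $f\sim2\mu_1$ stays in $\overline{\mCpp}\setminus\{0\}$; on the imaginary segment with $|\mu_1|>\sqrt{k_2^2-k_1^2}$ both $\mu_1,\mu_2$ lie in $\bi\,\mathbb{R}_{>0}$, so $\mathrm{Im}\,f>0$; and on the two small indentation arcs $\mathrm{Re}\,f$ (resp.\ $\mathrm{Im}\,f$) remains positive for $\varepsilon$ small, by the boundary evaluations above. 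The remaining piece, the imaginary segment with $0<|\mu_1|<\sqrt{k_2^2-k_1^2}$, is the delicate one: writing $\mu_1=\bi y$ with $y>0$ one has $\mu_2\in\mathbb{R}_{>0}$, so neither $\mathrm{Re}\,f$ nor $\mathrm{Im}\,f$ has a fixed sign, and the claim $f\notin\overline{\mCmm}$ is exactly the inequality
\[
(1-e^{-4\mu_2\bar\sigma_2})(1-e^{-4yM_2})-4e^{-2(yM_2+\mu_2\bar\sigma_2)}\,|\sin(2y\bar\sigma_2)|\,|\sin(2\mu_2M_2)|>0,
\]
which follows from Lemma~\ref{techlemma1} with $a=M_2/\bar\sigma_2$. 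Concatenating the pieces, $f(C_\varepsilon^r)$ is a closed loop in the simply connected set $\mC\setminus\overline{\mCmm}$, which omits the origin; hence its winding number about $0$ is $0$, so by the argument principle $f$ has no zero inside $C_\varepsilon^r$ and therefore none in $\mCpp$. This yields $A(\xi)\ne0$ on $\mCmp$ and, by symmetry, on $\mCpm$.

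I expect the genuine obstacle to be exactly that last piece of the winding-number computation: on the portion of the imaginary $\mu_1$-axis where $\mu_2$ is real, the two summands of $f$ can oppose one another in both real and imaginary parts, so any crude sign argument fails and one is forced into the sharp elementary inequality of Lemma~\ref{techlemma1} — which is precisely where the hypothesis $\bar\sigma_2>0$ enters in an essential way. A secondary technical point is making "the zeros are bounded and do not accumulate on the boundary" rigorous, which requires the explicit near-$k_1$ and near-$k_2$ limit computations rather than merely the identity theorem.
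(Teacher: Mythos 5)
Your proof is correct and follows essentially the same route as the paper: reduce by symmetry to $\mu_1\in\mCpp$, pass to $f(\mu_1)=A/((1-\epsilon_1)(1-\epsilon_2))=\frac{1+\epsilon_1}{1-\epsilon_1}\mu_1+\frac{1+\epsilon_2}{1-\epsilon_2}\mu_2$, and apply the argument principle on a contour that excises the two boundary zeros, verifying piece by piece that $f$ never enters $\overline{\mCmm}$. In particular you correctly identify the only genuinely delicate segment (the imaginary axis with $\mu_2$ real) and the use of Lemma~\ref{techlemma1} there, which is exactly the key step in the paper.
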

As a corollary, we get the following result.
\begin{mycorollary}
	The following eigenvalue problem 
	\begin{align}\label{eq:eigproblem}
	\begin{cases}
	&\frac{1}{\alpha_2}\frac{d}{dx_2}\left(\frac{1}{\alpha_2}\frac{d \phi}{d x_2} \right) + k^2\phi = \xi^2\phi,\quad {\rm for }\quad  x_2\in(-M_2,M_2).\\
	&[\phi] = 0,\quad [\phi'(x_2)] = 0,\quad{\rm on}\quad x_2=0,\\
	&\phi = 0,\quad{\rm on}\quad x_2 = \pm M_2,
	\end{cases}
	\end{align}
	has no eigenvalues $\xi$ in $\mathbb{C}^{-+}\cup\mathbb{C}^{+-}$.
	\begin{proof}
		Suppose there exists an eigenvalue $\xi\in\mathbb{C}^{-+}\cup\mathbb{C}^{+-}$
		with its associated eigenfunction $\phi\neq 0$. For the two-layered medium, $\phi$ can be written in the form,
		\begin{align} 
		\phi = \left\{
		\begin{array}{lc}
		c_1 e^{\bi\mu_1 \tilde{x}_2} + c_2 e^{-\bi\mu_1 \tilde{x}_2}, & x_2>0,\\
		d_1 e^{-\bi\mu_2 \tilde{x}_2} + d_2 e^{\bi\mu_2 \tilde{x}_2}, & x_2<0.
		\end{array}
		\right.
		\end{align}
		The boundary condition and interface conditions in \eqref{eq:eigproblem}
    give rise to
		\[
		\left\{
		\begin{array}{l}
		(1-\epsilon_1)c_1 -  (1-\epsilon_2) d_1=0,\\
		\mu_1(1+\epsilon_1)c_1 + \mu_2(1+\epsilon_2)d_2 = 0. 
		\end{array}
		\right.
		\]
		Since $\phi\neq 0$, the  linear system above must have a nonzero solution, which implies  the determinant 
		\[
		0=(1-\epsilon_1)\mu_2(1+\epsilon_2) + (1-\epsilon_2)(1+\epsilon_1)\mu_1 =
		A(\xi).
		\]
		It indicates that $A(\xi)$ has a root in
		$\mathbb{C}^{-+}\cup\mathbb{C}^{+-}$, which is a contradiction to Proposition 4.1.
	\end{proof}
\end{mycorollary}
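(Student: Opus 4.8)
The plan is to reduce \eqref{eq:eigproblem} to an elementary constant‑coefficient ODE by the PML change of variables, extract the dispersion relation satisfied by any eigenvalue, recognize it as $A(\xi)=0$, and invoke Proposition~\ref{PropA}.

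First I would change the independent variable from $x_2$ to $\tilde{x}_2=\int_0^{x_2}\alpha_2(t)\,dt$. Since $\frac{d}{d\tilde{x}_2}=\frac{1}{\alpha_2}\frac{d}{dx_2}$, the operator on the left of \eqref{eq:eigproblem} becomes $\frac{d^2}{d\tilde{x}_2^2}$, so on each of $x_2>0$ and $x_2<0$ an eigenfunction must solve the constant‑coefficient equation $\phi''+(k_j^2-\xi^2)\phi=0$ (primes denoting $\tfrac{d}{d\tilde{x}_2}$), $j=1,2$. Hence $\phi=c_1e^{\bi\mu_1\tilde{x}_2}+c_2e^{-\bi\mu_1\tilde{x}_2}$ for $x_2>0$ and $\phi=d_1e^{-\bi\mu_2\tilde{x}_2}+d_2e^{\bi\mu_2\tilde{x}_2}$ for $x_2<0$, with $\mu_j=\sqrt{k_j^2-\xi^2}$; the degenerate case $\mu_j=0$ is irrelevant here since it forces $\xi=\pm k_j\notin\mathbb{C}^{-+}\cup\mathbb{C}^{+-}$.

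Next I would impose the four scalar constraints. The Dirichlet conditions at $x_2=\pm M_2$ (where $\tilde{x}_2=\pm\tilde{M}_2$) give $c_2=-\epsilon_1c_1$ and $d_2=-\epsilon_2d_1$ with $\epsilon_j=e^{2\bi\mu_j\tilde{M}_2}$. Because $\sigma_2\equiv 0$ in a neighbourhood of $x_2=0$ one has $\alpha_2(0)=1$ and $\partial_{x_2}\phi=\alpha_2\,\partial_{\tilde{x}_2}\phi$, so the transmission conditions at $x_2=0$ reduce to the homogeneous system
\[
(1-\epsilon_1)c_1-(1-\epsilon_2)d_1=0,\qquad \mu_1(1+\epsilon_1)c_1+\mu_2(1+\epsilon_2)d_1=0 .
\]
A nonzero eigenfunction requires its determinant to vanish, i.e.\ $(1-\epsilon_2)(1+\epsilon_1)\mu_1+(1-\epsilon_1)(1+\epsilon_2)\mu_2=0$, which is exactly $A(\xi)=0$ by \eqref{eq:def:A}. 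Since $A(-\xi)=A(\xi)$ it suffices to take $\xi\in\mathbb{C}^{-+}$, and Proposition~\ref{PropA} gives $A(\xi)\neq 0$ there---a contradiction. I do not expect any real obstacle at this stage: the entire analytic burden (the winding‑number estimate for $A$ along the contour $C_\varepsilon^r$, through Lemmas~\ref{techlemma1}--\ref{techlemma2}) has already been discharged in the proof of Proposition~\ref{PropA}; what remains is only the bookkeeping that matches the separated‑solution determinant to the explicit form of $A$, together with the trivial but essential observation that $\alpha_2(0)=1$, which makes the flux condition translate cleanly into $[\partial_{\tilde{x}_2}\phi]=0$.
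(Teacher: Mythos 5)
Your proposal is correct and follows essentially the same route as the paper: write the separated solution, use the Dirichlet conditions to eliminate $c_2,d_2$, impose the transmission conditions to obtain a $2\times 2$ homogeneous system whose determinant is $A(\xi)$, and contradict Proposition~\ref{PropA}. In fact your bookkeeping is slightly more careful than the paper's (which has an apparent typo, writing $d_2$ where $d_1$ should appear in the second equation of the linear system), and the observation that $\alpha_2(0)=1$ makes the flux-condition translation rigorous.
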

This Corollary reveals a stronger result in comparison with {
  \cite[Prop. A.1]{lulusong18}}, where $\bar{\sigma}_2$ should be sufficiently large.

We also need the properties of $A$ at $\mu_j=0$ for $j=1,2$.
\begin{mylemma}\label{lemA3}
	The function $A(\mu_j)$ has a simple zero at $\mu_j=0$, $j=1,2$. In particular, it holds
	\begin{equation}
	\label{eq:est:Ajp}
	\left|A'(\mu_j)|_{\mu_j=0}\right|\geq 2\sqrt{k_2^2-k_1^2}\min(M_2,\bar{\sigma}_2)(1-e^{-2\sqrt{k_2^2-k_1^2}\min(M_2,\bar{\sigma}_2)}), \quad j=1,2.
	\end{equation}
	
	\begin{proof}
		We prove the case $j=1$, {and for the moment we denote by
      $A(0)$ and $A'(0)$ the function $A(\mu_1)$ and $A'(\mu_1)$ evaluated at
      $\mu_1=0$, respectively.} It can be verified that $A(0)=0$ since $\mu_1=0$
    when $\xi = \pm k_1$.
		By direct calculation, at $\mu_1=0$, $\epsilon_1=1$ and
    $\mu_2=\sqrt{k_2^2-k_1^2}>0$, we get
		\[
		A'(0) = (2-2\bi\tilde{M}_2\mu_2) - (2+2\bi\tilde{M}_2\mu_2)\epsilon_2.
		\]
		Since $\bi\tilde{M}_2\mu_2=\sqrt{k_2^2-k_1^2}(-\bar{\sigma}_2+\bi M_2)\in
    \mathbb{C}^{-+}$, we have
    $|2-2\bi\tilde{M}_2\mu_2|>|2+2\bi\tilde{M}_2\mu_2|$, yielding
		\[
		|A'(0)|> |2+2\bi\tilde{M}_2\mu_2|\cdot|1-|\epsilon_2||>2\sqrt{k_2^2-k_1^2}M_2(1-e^{-2\sqrt{k_2^2-k_1^2}\bar{\sigma}_2}).
		\]
    One similarly proves the property for $\mu_2=0$; we omit the details.
	\end{proof}
\end{mylemma}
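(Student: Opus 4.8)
\emph{Proof proposal.} The plan is to treat $A$ as a holomorphic function of $\mu_j$ near $\mu_j=0$, with $\mu_{3-j}=\sqrt{k_2^2-k_1^2+\mu_j^2}$ in the branch fixed earlier, and to compute $A$ and $A'$ at $\mu_j=0$ by differentiating the factored form $A=(1-\epsilon_2)(1+\epsilon_1)\mu_1+(1-\epsilon_1)(1+\epsilon_2)\mu_2$. The useful chain-rule facts at $\mu_j=0$ are that $d\mu_{3-j}/d\mu_j=\mu_j/\mu_{3-j}$ vanishes there while $d\epsilon_j/d\mu_j=2\bi\tilde{M}_2\epsilon_j$, so only $\epsilon_j'$ survives. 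Since $\epsilon_j=1$ and $\mu_j=0$ at $\xi=\pm k_j$, this gives $A(0)=0$ and $A'(\mu_j)|_{\mu_j=0}=(2-2\bi\tilde{M}_2\mu_{3-j}^{0})-(2+2\bi\tilde{M}_2\mu_{3-j}^{0})\,\epsilon_{3-j}^{0}$, where $\mu_{3-j}^{0},\epsilon_{3-j}^{0}$ denote evaluation at $\mu_j=0$; concretely $\mu_2^{0}=\sqrt{k_2^2-k_1^2}$ for $j=1$ (this is the expression already recorded in the text) and $\mu_1^{0}=\bi\sqrt{k_2^2-k_1^2}$ for $j=2$.

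Next I would argue that the two terms cannot cancel. Computing $\bi\tilde{M}_2\mu_{3-j}^{0}$: for $j=1$ it equals $-\bar{\sigma}_2\sqrt{k_2^2-k_1^2}+\bi M_2\sqrt{k_2^2-k_1^2}$, and for $j=2$ it equals $-\sqrt{k_2^2-k_1^2}(M_2+\bi\bar{\sigma}_2)$; in both cases its real part is strictly negative (by $\bar{\sigma}_2>0$ when $j=1$, by $M_2>0$ when $j=2$), so $|2-2\bi\tilde{M}_2\mu_{3-j}^{0}|^2-|2+2\bi\tilde{M}_2\mu_{3-j}^{0}|^2=-16\,\mathrm{Re}(\bi\tilde{M}_2\mu_{3-j}^{0})>0$. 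Together with $|\epsilon_{3-j}^{0}|<1$, the reverse triangle inequality gives $|A'(0)|\ge|2-2\bi\tilde{M}_2\mu_{3-j}^{0}|-|2+2\bi\tilde{M}_2\mu_{3-j}^{0}|\,|\epsilon_{3-j}^{0}|>|2+2\bi\tilde{M}_2\mu_{3-j}^{0}|\,(1-|\epsilon_{3-j}^{0}|)>0$, so $\mu_j=0$ is a simple zero.

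For the explicit bound \eqref{eq:est:Ajp} I would then bound $|2+2\bi\tilde{M}_2\mu_{3-j}^{0}|$ below by the modulus of its imaginary part, namely $2\sqrt{k_2^2-k_1^2}\,M_2$ when $j=1$ and $2\sqrt{k_2^2-k_1^2}\,\bar{\sigma}_2$ when $j=2$, and note $1-|\epsilon_{3-j}^{0}|$ equals $1-e^{-2\sqrt{k_2^2-k_1^2}\bar{\sigma}_2}$ when $j=1$ and $1-e^{-2\sqrt{k_2^2-k_1^2}M_2}$ when $j=2$. Since $M_2\ge\min(M_2,\bar{\sigma}_2)$, $\bar{\sigma}_2\ge\min(M_2,\bar{\sigma}_2)$, and $t\mapsto 1-e^{-2\sqrt{k_2^2-k_1^2}\,t}$ is increasing, both products dominate $2\sqrt{k_2^2-k_1^2}\min(M_2,\bar{\sigma}_2)(1-e^{-2\sqrt{k_2^2-k_1^2}\min(M_2,\bar{\sigma}_2)})$, which is the claimed estimate; the use of $\min(M_2,\bar{\sigma}_2)$ is precisely what lets a single inequality cover both $j=1$ and $j=2$, whose roles of $M_2$ and $\bar{\sigma}_2$ are interchanged. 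Nothing here is genuinely difficult; the only delicate point is obtaining the strict (not merely weak) inequality $A'(0)\ne0$, which rests on $\mathrm{Re}(\bi\tilde{M}_2\mu_{3-j}^{0})<0$ — hence on $\bar{\sigma}_2>0$ (for $j=1$) or $M_2>0$ (for $j=2$) — and on carefully tracking which factors are differentiated in the chain rule.
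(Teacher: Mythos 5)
Your proposal is correct and follows essentially the same route as the paper: you compute $A'(0)$ by the same chain-rule bookkeeping, arrive at the identical formula $A'(\mu_j)\big|_{\mu_j=0}=(2-2\bi\tilde{M}_2\mu_{3-j}^{0})-(2+2\bi\tilde{M}_2\mu_{3-j}^{0})\,\epsilon_{3-j}^{0}$, invoke the reverse triangle inequality together with $\mathrm{Re}(\bi\tilde{M}_2\mu_{3-j}^{0})<0$, and then lower-bound by the imaginary part exactly as the paper does for $j=1$. You simply make explicit two steps the paper compresses into ``one similarly proves the property for $\mu_2=0$'': the symmetric $j=2$ computation (with $\mu_1^{0}=\bi\sqrt{k_2^2-k_1^2}$ flipping the roles of $M_2$ and $\bar\sigma_2$) and the monotonicity argument that passes from the case-specific bounds to the symmetric $\min(M_2,\bar\sigma_2)$ form in the lemma.
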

Combining all the properties above, we obtain a lower bound of $A(\xi)$.
\begin{mylemma}\label{thmA5}
	For any $z\in\ol\mCpp$ with $|z|\lesssim|\tilde{M}_2|$, we have
	\[
	\max\left\{|\mu_2( e^{\bi\mu_1 z} - 1 )|,|\mu_1(e^{\bi\mu_2 z}-1)|,\left|
	\frac{\mu_1\mu_2}{\mu_1+\mu_2} \right|  \right\}\lesssim |A(\xi)|,
	\]
	for all $\xi\in\ol{\mCpm\cup\mCmp}$.
	\begin{proof}
		By Lemma \ref{techlemma2}, we only need to prove the estimate 
		\[
		\left| \frac{\mu_1\mu_2}{\mu_1+\mu_2} \right|\lesssim |A(\xi)|.
		\]
		Otherwise, there must exist a sequence
		$\{\xi_{n}\}_{n=1}^{\infty}\in\ol{\mCpm\cup\mCmp}$ with $\xi_n\to\xi_0$ as
		$n\to\infty$, such that
		\[
		\lim_{n\to\infty}\left| \frac{(\mu_1+\mu_2)A(\xi)}{\mu_1\mu_2} \right| = 0.
		\]
		We discuss two cases:
		\begin{itemize}
    \item[1.] $|\xi_0|<+\infty$, then we claim that $\xi_0$ must be one of the
      four values $\pm k_1, \pm k_2$ since otherwise we get $A(\xi)=0$ for
      $\xi=\xi_0$, which is in contradiction with Proposition \ref{PropA}.
      However, even if $\xi_0\in\{\pm k_1,\pm k_2\}$, we can immediately get
      $A'(\mu_1)|_{\mu_1=0} = 0$ or $A'(\mu_2)|_{\mu_2=0} = 0$, which is in
      contradiction with Lemma \ref{lemA3}.
			\item[2.] $|\xi_0|=+\infty$, then one easily gets that $\eps_j\to 0$ since
			$M_2,\bar{\sigma}_2>0$ and since $\mu_{j,n}=\sqrt{k_j^2-\xi_n^2}\to
			\sqrt{\xi_0^2}\bi\in\mCpp$. Consequently,
			$\lim_{n\to\infty}\frac{(\mu_{1,n}+\mu_{2,n})A(\xi_n)}{\mu_{1,n}\mu_{2,n}}
			= 4$, 
			which is also a contradiction.
		\end{itemize}
	\end{proof}
\end{mylemma}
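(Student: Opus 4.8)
The plan is to peel off a routine reduction and then concentrate on the single core inequality $\left|\frac{\mu_1\mu_2}{\mu_1+\mu_2}\right|\lesssim|A(\xi)|$, which carries all the weight. As in Proposition~\ref{PropA}, I would use $A(-\xi)=A(\xi)$ to assume $\xi\in\ol\mCmp$, so that $\mu_j\in\ol\mCpp$ and $|\epsilon_j|\le1$ for $j=1,2$. For the reduction, write $\mu_2(e^{\bi\mu_1 z}-1)=\frac{\mu_1\mu_2}{\mu_1+\mu_2}\cdot\frac{(e^{\bi\mu_1 z}-1)(\mu_1+\mu_2)}{\mu_1}$ and bound the last factor by $4+2\sqrt{k_2^2-k_1^2}\,|z|$ using Lemma~\ref{techlemma2}, which applies here because $\mu_1,z\in\ol\mCpp$ makes ${\rm Re}(\bi\mu_1 z)\le0$. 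Since $|z|\lesssim|\tilde M_2|$ and $\tilde M_2$ is a fixed constant in this section, that factor is $\lesssim1$, hence $|\mu_2(e^{\bi\mu_1 z}-1)|\lesssim\left|\frac{\mu_1\mu_2}{\mu_1+\mu_2}\right|$, and symmetrically $|\mu_1(e^{\bi\mu_2 z}-1)|\lesssim\left|\frac{\mu_1\mu_2}{\mu_1+\mu_2}\right|$; so the whole maximum is controlled once the core inequality is established.

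I would prove the core inequality by contradiction and compactness. If it failed, there would be $\xi_n\in\ol{\mCpm\cup\mCmp}$ with $\left|\frac{(\mu_{1,n}+\mu_{2,n})A(\xi_n)}{\mu_{1,n}\mu_{2,n}}\right|\to0$; pass to a subsequence with $\xi_n\to\xi_0$ in $\mathbb C\cup\{\infty\}$, and write $\mu_{j,0}$ for the corresponding limit of $\mu_{j,n}$. If $\xi_0$ is finite and $\xi_0\notin\{\pm k_1,\pm k_2\}$, then $\mu_{1,0}\mu_{2,0}\ne0$ forces $A(\xi_0)=0$, contradicting Proposition~\ref{PropA} (or Lemma~\ref{lemA1} if $\xi_0$ lies on an axis). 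If $\xi_0\in\{\pm k_1,\pm k_2\}$, say $\xi_0=k_1$, then $\mu_{1,0}=0$ while $\mu_{2,0}=\sqrt{k_2^2-k_1^2}\ne0$; since $\frac{(\mu_1+\mu_2)A}{\mu_1\mu_2}=\frac{A}{\mu_1}+\frac{A}{\mu_2}$ and $A$ is holomorphic in $\mu_1$ near $0$ (with $\mu_2$ bounded away from $0$), the assumed limit $0$ gives $A'(\mu_1)|_{\mu_1=0}=0$, contradicting Lemma~\ref{lemA3}; the other three values follow by the symmetry $A(-\xi)=A(\xi)$ and by swapping $\mu_1\leftrightarrow\mu_2$. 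If $|\xi_0|=\infty$, then $\mu_{j,n}\in\ol\mCpp$, $|\mu_{j,n}|\to\infty$, and $\arg\tilde M_2\in(0,\pi/2)$ force ${\rm Re}(\bi\mu_{j,n}\tilde M_2)\to-\infty$, hence $\epsilon_{j,n}\to0$; combined with $|\mu_{1,n}-\mu_{2,n}|=\frac{|k_2^2-k_1^2|}{|\mu_{1,n}+\mu_{2,n}|}\to0$, this yields $A(\xi_n)=(\mu_{1,n}+\mu_{2,n})(1+o(1))$ and $\mu_{1,n}/\mu_{2,n}\to1$, so $\frac{(\mu_{1,n}+\mu_{2,n})A(\xi_n)}{\mu_{1,n}\mu_{2,n}}\to4$, again a contradiction.

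The reduction is routine once Lemma~\ref{techlemma2} is available; the substance is that the core inequality is merely a quantitative form of the non-vanishing of $A$ in $\mCmp\cup\mCpm$ (Proposition~\ref{PropA}) together with the simplicity of its zeros at $\mu_j=0$ (Lemma~\ref{lemA3}), both already proven. The delicate points within this lemma are the two boundary cases of the compactness argument: at $\xi_0=\pm k_j$ one must cancel the simple zero of $A$ exactly against the simple zero of the denominator $\mu_1\mu_2$, and at $\xi_0=\infty$ one must keep $\mu_{j,n}$ in the closed first quadrant so that the phases $\bi\mu_{j,n}\tilde M_2$ have real parts tending to $-\infty$ — precisely where the hypothesis $M_2,\bar\sigma_2>0$ (equivalently $\arg\tilde M_2\in(0,\pi/2)$) enters.
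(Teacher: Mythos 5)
Your proposal is correct and follows essentially the same route as the paper's own proof: reduce to the single estimate $\left|\frac{\mu_1\mu_2}{\mu_1+\mu_2}\right|\lesssim|A|$ via Lemma~\ref{techlemma2}, then argue by compactness/contradiction, handling $\xi_0\notin\{\pm k_1,\pm k_2\}$ by the non-vanishing of $A$, $\xi_0\in\{\pm k_1,\pm k_2\}$ by the simplicity of the zero of $A$ in $\mu_j$ (Lemma~\ref{lemA3}), and $\xi_0=\infty$ by $\epsilon_j\to0$ forcing the ratio to $4$. Your one refinement --- observing that when $\xi_0$ lies on a coordinate axis but is not a branch point, the correct citation is Lemma~\ref{lemA1} rather than Proposition~\ref{PropA} (which covers only the open quadrants) --- tightens a slight looseness in the paper's wording but does not change the argument.
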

\subsection{The properties of $f_{x_2,y_2}^{i,j}$,  $i,j=1,2$}
To see $f_{x_2,y_2}^{i,j}$ more clearly, we give a decomposition of
$f_{x_2,y_2}^{i,j}$ for $i,j=1,2$ first. It turns out that
$f_{x_2,y_2}^{i,j}(\xi)$ with $x\in B_{\rm ex}^i$ and $y\in B_{\rm ex}^j$ for
$i,j=1,2$ can be decomposed as follows,
	\[
	f_{x_2,y_2}^{i,j}(\xi) = \sum_{l=1}^2f_{x_2,y_2}^{i,j;l}(\xi)e^{\bi\mu_l\tilde{M}_2},
	\]
	where
	\begin{align}
	\label{eq:def:111}
	f_{x_2,y_2}^{i,i;i}(\xi) =&\left[ 2(\epsilon_i - 1) + \frac{4\mu_{3-i}}{\mu_1+\mu_2} \right]e^{i\mu_{i}(\tilde{M}_2 + \tilde{x}_2^+ + \tilde{y}_2^+)}\nonumber\\
	&-\left[ (\epsilon_{3-i}-1) + \frac{(1+\epsilon_{3-i})\mu_{3-i}}{\mu_i}\right]\Big[e^{i\mu_i(\tilde{M}_2 + \tx_2^+ + \ty_2^+)}+e^{\bi \mu_i(3\tilde{M}_2-\tx_2^+-\ty_2^+)}\nonumber\\
	&- e^{\bi \mu_i(\tilde{M}_2 - \ty_2^++\tx_2^+)} - e^{\bi \mu_i(\tilde{M}_2 + \ty_2^+ - \tx_2^+)}\Big],\\
	\label{eq:def:112}
	f_{x_2,y_2}^{i,i;3-i}(\xi) =& -\frac{4\mu_{3-i}}{\mu_1+\mu_2} e^{\bi \mu_i(\tx_2^+ + \ty_2^+) + \bi\mu_{3-i}\tM_2},\\
	\label{eq:def:211}
	f_{x_2,y_2}^{3-i,i;i}(\xi) =& \frac{\mu_{3-i}-\mu_i}{\mu_1+\mu_2}e^{\bi(\mu_i(\tilde{M}_2+\ty_2^+)+\mu_{3-i}\tx_2^+)} - e^{\bi(\mu_i(\tilde{M}_2-\ty_2^+)+\mu_{3-i}\tx_2^+)},\\
	\label{eq:def:212}
	f_{x_2,y_2}^{3-i,i;3-i}(\xi) =& \left(\epsilon_{i}+ \frac{\mu_{i}-\mu_{3-i}}{\mu_1+\mu_2}\right)e^{\bi(\mu_{i}\ty_2^++\mu_{3-i}(\tilde{M}_2+\tx_2^+))} \nonumber\\
	&+ e^{\bi(\mu_{i} (2\tilde{M}_2-\ty_2^+) + \mu_{3-i}(\tilde{M}_2 - \tx_2^+))} - e^{\bi(\mu_{i} \ty_2^+ + \mu_{3-i}(\tilde{M}_2 - \tx_2^+))},
	\end{align}
  for $i=1,2$.
	Based on the above decompositions, we reveal the relation
between $A(\xi)$ and $f_{x_2,y_2}^{i,j;l}(\xi)$ for $i,j,l=1,2$ in the following lemma.
	\begin{mylemma}\label{lemA7}
	For $\xi\in\ol{\mathbb{C}^{+-}\cup\mathbb{C}^{-+}}$, it holds
	\begin{align*}
	|f_{x_2,y_2}^{i,j;l}(\xi)|\lesssim \frac{\left|A\right| }{|\mu_i|},\quad |\partial_{x_2}f_{x_2,y_2}^{i,j;l}(\xi)|\lesssim\Gamma\left|A\right|,
	\end{align*}
	where $x\in B_{\rm ex}^i$, $y\in B_{\rm ex}^j$ for $i,j,l=1,2$.
	\begin{proof}
		We prove $j=1$ only. According to \eqref{eq:def:111} and Lemma \ref{thmA5}, we see that 
		\begin{align*}
		|f_{x_2,y_2}^{1,1;1}|\lesssim & |\eps_2 - 1| + \frac{|\mu_2|}{|\mu_1+\mu_2|} + \frac{|\mu_2|}{|\mu_1|}\Big|e^{i\mu_1(\tilde{M}_2 + \tx_2 + \ty_2)}+e^{\bi \mu_1(3\tilde{M}_2-\tx_2-\ty_2)}\nonumber\\
		&- e^{\bi \mu_1(\tilde{M}_2 - \ty_2+\tx_2)} - e^{\bi \mu_1(\tilde{M}_2 + \ty_2 - \tx_2)}\Big|\lesssim\frac{|A|}{|\mu_1|}.
		\end{align*}
		Similarly, one obtains that $|\partial_{x_2}
    f_{x_2,y_2}^{1,1;1}|\lesssim|A|$. The estimates for $f_{x_2,y_2}^{1,1;2}$
    can be obtained easily by {Lemma \ref{thmA5}}. According to
    (\ref{eq:def:211}) and {Lemma \ref{thmA5}}, we see that
		\begin{align*}
		|f_{x_2,y_2}^{2,1;1}| \lesssim \frac{\left|\mu_1\right|}{|\mu_1+\mu_2|} + |e^{\bi\mu_1( \tilde{M}_2 + y_2 )} - e^{\bi \mu_1( \tilde{M}_2 - y_2 )}||		\lesssim\frac{|A|}{|\mu_2|},
		\end{align*}
		and (\ref{eq:def:212}) leads to
		\begin{align*}
		|f_{x_2,y_2}^{2,1;2}| \lesssim|\epsilon_1-1| + \frac{|\mu_1|}{|\mu_1+\mu_2|} + |e^{\bi\mu_1(2\tilde{M}_2-\ty_2)} - e^{\bi \mu_1 \ty_2}|
		\lesssim\frac{|A|}{|\mu_2|}.
		\end{align*}
		The estimates for all the other cases can be similarly analyzed.
	\end{proof}
\end{mylemma}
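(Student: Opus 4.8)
The plan is to bound each $f_{x_2,y_2}^{i,j;l}$ directly from its closed form \eqref{eq:def:111}--\eqref{eq:def:212}, reducing every summand to a quantity that Lemma~\ref{thmA5} already controls by $|A|$. I would begin with two reductions. Since $A$, the $\mu_j$ and the $\epsilon_j$ all depend on $\xi$ only through $\xi^2$, and since the four formulas \eqref{eq:def:111}--\eqref{eq:def:212} are interchanged by the relabeling $1\leftrightarrow 2$, it suffices to treat $\xi\in\ol{\mCmp}$ and $j=1$; for such $\xi$ one has $\mu_1,\mu_2\in\ol{\mCpp}$ and $|\epsilon_1|,|\epsilon_2|\le 1$. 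Moreover, because $x_2,y_2\in[-M_2,M_2]$, the transformed heights satisfy $\tx_2^+,\ty_2^+\in\ol{\mCpp}$ with modulus at most $|\tM_2|$; consequently every exponent $z$ appearing in a factor $e^{\bi\mu_l z}$ in \eqref{eq:def:111}--\eqref{eq:def:212} --- e.g. $\tM_2+\tx_2^++\ty_2^+$, $3\tM_2-\tx_2^+-\ty_2^+$, $\tM_2\pm(\tx_2^+-\ty_2^+)$, $2\tM_2-\ty_2^+$ and $2\tM_2$ --- lies in $\ol{\mCpp}$ with $|z|\lesssim|\tM_2|$, so $|e^{\bi\mu_l z}|\le 1$, and Lemma~\ref{thmA5} supplies the working inequalities $|\mu_{3-l}(e^{\bi\mu_l z}-1)|\lesssim|A|$, $|\mu_{3-l}(\epsilon_l-1)|\lesssim|A|$ (the first with $z=2\tM_2$), and $|\mu_1\mu_2/(\mu_1+\mu_2)|\lesssim|A|$, while Lemma~\ref{techlemma2} gives the bare variant $|e^{\bi\mu_l z}-1|\lesssim|\mu_l|/|\mu_1+\mu_2|$.

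For the first estimate I would then run a normalization-plus-cancellation argument through each of \eqref{eq:def:111}--\eqref{eq:def:212}; note that in $f_{x_2,y_2}^{i,j;l}$ the variable height $\tx_2^+$ is always paired with $\mu_i$ (the half-space index of $x$), so the target is $|A|/|\mu_i|$. Each rational prefactor is expanded by peeling off a constant $1$, e.g. $\tfrac{\mu_i-\mu_{3-i}}{\mu_1+\mu_2}=1-\tfrac{2\mu_{3-i}}{\mu_1+\mu_2}$ and $\epsilon_{3-i}=1+(\epsilon_{3-i}-1)$. The peeled-off $1$'s recombine with the remaining $\pm e^{\bi\mu_l z_r}$ terms into sums $\sum_r c_r e^{\bi\mu_l z_r}$ whose coefficients satisfy $\sum_r c_r=0$ --- this vanishing is exactly the algebraic identity engineered into the construction of $G$ --- so each such sum equals $\sum_r c_r(e^{\bi\mu_l z_r}-1)$, which is $\lesssim|A|/|\mu_{3-l}|$ by the first working inequality, and the prefactor carrying it in \eqref{eq:def:111}--\eqref{eq:def:212} (together with $|\mu_1\mu_2/(\mu_1+\mu_2)|\lesssim|A|$ where a further power of $\mu$ is needed) brings the contribution down to $\lesssim|A|/|\mu_i|$. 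The leftover summands are bounded exponentials multiplied by one of $\tfrac{\mu_i}{\mu_1+\mu_2}$, $\tfrac{\mu_{3-i}}{\mu_1+\mu_2}$ or $\epsilon_{3-i}-1$, each $\lesssim|A|/|\mu_i|$ by the working inequalities (for $\epsilon_{3-i}-1$, via $|\mu_i(\epsilon_{3-i}-1)|\lesssim|A|$). Carrying this out over all $2^3$ cases --- four after the $1\leftrightarrow 2$ reduction, the model case being $f_{x_2,y_2}^{1,1;1}$, where the four-term bracket in \eqref{eq:def:111} with coefficients $1,1,-1,-1$ is carried by $(1+\epsilon_2)\mu_2/\mu_1$ --- yields $|f_{x_2,y_2}^{i,j;l}(\xi)|\lesssim|A|/|\mu_i|$.

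The derivative bound follows by the same mechanism. Each $f_{x_2,y_2}^{i,j;l}$ depends on $x_2$ only through $\tx_2^+$, and $\partial_{x_2}\tx_2^+=\pm\alpha_2(x_2)$, so differentiation multiplies every exponential $e^{\bi\mu_i(\,\cdots\,\pm\tx_2^+)}$ by $\pm\bi\mu_i\alpha_2(x_2)$ and leaves the prefactors unchanged; one checks that the resulting exponential sums again have vanishing coefficient sum, so the argument of the second paragraph applies to $(\bi\mu_i\alpha_2)^{-1}\partial_{x_2}f_{x_2,y_2}^{i,j;l}$ and gives $|\partial_{x_2}f_{x_2,y_2}^{i,j;l}|\lesssim|\mu_i\alpha_2(x_2)|\cdot|A|/|\mu_i|=|\alpha_2(x_2)|\,|A|\lesssim\Gamma|A|$, with $\Gamma\gtrsim\|\alpha_2\|_{L^\infty([-M_2,M_2])}$ (which is $\eqsim 1$ within this section). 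I expect the only real obstacle to be the bookkeeping: verifying, template by template in \eqref{eq:def:111}--\eqref{eq:def:212} and again after differentiation, that once the prefactors are normalized the residual exponential combinations indeed have zero net coefficient --- this is precisely what lets one extract the extra factor $\mu_{3-l}$ from Lemma~\ref{thmA5} that is then traded back for $\mu_i$ --- and that this cancellation survives the $x_2$-differentiation. The cancellation cannot be dispensed with, since $A$ has only simple zeros at $\xi=\pm k_1,\pm k_2$ (Lemmas~\ref{lemA1} and \ref{lemA3}) and a naive term-by-term bound would blow up there.
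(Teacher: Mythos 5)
Your plan coincides with the paper's own proof: bound each template in \eqref{eq:def:111}--\eqref{eq:def:212} term by term via Lemma~\ref{thmA5} and Lemma~\ref{techlemma2}, after normalizing the rational prefactors so that the potentially large parts are carried by exponential differences. What the paper's terse display leaves implicit and you make explicit is that the four-term exponential brackets arriving with prefactors of size $\mu_{3-l}/\mu_i$ have vanishing coefficient sum, so they factor through $e^{\bi\mu_l z_r}-1$ and are controlled by $|\mu_{3-l}(e^{\bi\mu_l z_r}-1)|\lesssim|A|$; this is precisely what survives the simple zeros of $A$ at $\xi=\pm k_1,\pm k_2$. Your remark that $\partial_{x_2}$ sends $e^{\bi\mu_i(\cdots\pm\tx_2^+)}$ to $\pm\bi\mu_i\alpha_2(x_2)e^{\bi\mu_i(\cdots\pm\tx_2^+)}$, flipping some signs but keeping the coefficient sum zero, is also the correct mechanism for the derivative bound, the extra factor being $\|\alpha_2\|_{L^\infty([-M_2,M_2])}\eqsim 1$, which is evidently what ``$\Gamma$'' in the statement is meant to absorb. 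One caveat to keep in mind while doing the bookkeeping you describe: \eqref{eq:def:111} as printed has $2(\epsilon_i-1)$ in the first bracket, which would \emph{not} obey $|f^{i,i;i}|\lesssim|A|/|\mu_i|$, since $|A|/|\mu_i|\to 0$ as $\mu_{3-i}\to 0$ while $\epsilon_i-1\not\to 0$ there; re-matching $\sum_l f^{i,i;l}e^{\bi\mu_l\tilde{M}_2}$ against $f^{i,i}$ in \eqref{eq:f:++} shows the first bracket should read $2(\epsilon_{3-i}-1)$ (the $\epsilon_i$ version produces an unmatched $2\epsilon_i^2$), which is consistent both with the $|\epsilon_2-1|$ the paper itself writes for $i=1$ and with the $\epsilon_{3-i}=1+(\epsilon_{3-i}-1)$ peeling in your plan.
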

\subsection{Existence of the Green's function for the waveguide problem}\label{subsec:Gw}
With the properties of $A$ and $f_{x_2,y_2}^{i,j}$ at our disposal, we are now
ready to show the existence of Green's function for the waveguide problem
(\ref{eq:green:pmly}). The following lemma is used to show $G_{\rm layer}^{i,j}$
and $G_{\rm res}^{i,j}$ appeared in
\eqref{eq:Green:res:++}-\eqref{eq:Green:layer:-+} are well-defined.
\begin{mylemma}\label{lemgreen}
	For any $x, y\in\mathbb{R}\times((-M_2,0)\cup(0,M_2))$, the integrals  
	\begin{align}
	\label{eq:def:I1ij}
	I_1^{i,j}(x_1,\tilde{x}_2;y_1,\tilde{y}_2) = \int_{-\infty}^{+\infty}\frac{e^{\bi(x_1-y_1)\xi}}{A}f_{x_2,y_2}^{i,j}(\xi)d\xi,\\ 
	I_2^{i,j}(x_1,\tilde{x}_2;y_1,\tilde{y}_2) = \int_{-\infty}^{+\infty}e^{\bi(x_1-y_1)\xi}g_{x_2,y_2}^{i,j}(\xi)d\xi,
	\end{align}
	satisfy the following properties:
	\begin{itemize}
		\item[(1).] They are well-defined as improper integrals.
		\item[(2).] They solve the following Helmholtz equations:
		\[
		\partial_{x_1}^2 I_l^{i,j} + \partial_{\tilde{x}_2}^2 I_l^{i,j} + k_i^2 I_l^{i,j}
		= 0,\quad l=1,2.
		\]
		\item[(3).] We have 
		\[
		I_1^{i,j} = \left(  \int_{+\infty\bi}^{0} + \int_{0}^{+\infty}\right)\frac{e^{\bi|x_1-y_1|\xi}}{A}f_{x_2,y_2}^{i,j}(\xi)d\xi,
		\]
		and
		\[
		I_2^{i,j} = \left( \int_{+\infty\bi}^{0} + \int_{0}^{+\infty}\right)e^{\bi|x_1-y_1|\xi}g_{x_2,y_2}^{i,j}(\xi)d\xi.
		\]
		\item[(4).] They satisfy the radiation condition, for $l=1,2$,
		\[
		\left( \frac{\partial I_l^{i,j}}{\partial |x_1-y_1|} - \bi k_i
		I_l^{i,j}\right) = {\cal O}(|x_1-y_1|^{-1}),\quad{\rm as}\ |x_1-y_1|\to\infty.
		\]
		\item[(5).] They satisfy the finiteness condition
		\[
		I_1^{i,j}={\cal O}(|x_1-y_1|^{-1/2}),\quad I_2^{i,j} = {\cal
      O}(|x_1-y_1|^{-1}),\quad{\rm as}\ |x_1-y_1|\to\infty.
		\]
	\end{itemize}
	\begin{proof}
		(1) and (2). Noticing that $f_{x_2,y_2}^{i,j}$ and $A$ are even functions of $\xi$, we get 
		\[
		I_1^{i,j} = \int_{-\infty}^{\infty} \frac{e^{\bi|x_1-y_1|\xi}}{A}f_{x_2,y_2}^{i,j}(\xi) d\xi.
		\]
		By Lemma 3.7 above, we get that 
		\[
		\left|\frac{e^{\bi|x_1-y_1|\xi}}{A}f_{x_2,y_2}^{i,j}(\xi)\right|\lesssim
		\sum_{l=1}^2\frac{|e^{\bi\mu_l\tilde{M}_2|}}{|\mu_l|}\leq
		\sum_{l=1}^2\frac{e^{-p_l\bar{\sigma}_2 - q_lM_2}}{\sqrt{|k_l^2-\xi^2|}},
		\]
		in which we recall $\mu_l = p_l + \bi q_l$ for $l=1,2$. As
		$\xi\to\infty$, one sees from the above that 
		\[
		\left|\frac{e^{\bi|x_1-y_1|\xi}}{A}f_{x_2,y_2}^{i,j}(\xi)\right| = {\cal O}\left( \frac{ e^{-|\xi|M_2} }{|\xi|} \right).
		\]
		On the other hand, as $|\xi|\to k_i$, it holds that 
		\[
		\left|\frac{e^{\bi|x_1-y_1|\xi}}{A}f_{x_2,y_2}^{i,j}(\xi)\right| = {\cal O}(|k_i-|\xi||^{-1/2}).
		\]
		Consequently, the integral $I_1^{i,j}$ exists as an improper integral for
		$i,j=1,2$. One similarly proves the following identities
		\begin{align*}
		\partial_{x_1}^{m}\partial_{\tilde{x}_2}^{n} I_1^{i,j} = \int_{-\infty}^{+\infty} (\bi\xi)^m(\bi\mu_i)^n\frac{e^{\bi|x_1-y_1|\xi}}{A}f_{x_2,y_2}^{i,j}(\xi) d\xi,
		\end{align*}
		for any $m,n\in\mathbb{N}$, as the r.h.s integral exists as an improper
		integral.
		
		Thus, one gets
		\begin{align*}
		\left( \partial_{x_1}^{2} + \partial_{\tilde{x}_2}^2\right) I_1^{i,j} =& \int_{-\infty}^{+\infty} \left((\bi\xi)^2+(\bi\mu_i)^2\right)\frac{e^{\bi|x_1-y_1|\xi}}{A}f_{x_2,y_2}^{i,j}(\xi) d\xi = -k_i^2 I_1^{i,j}.
		\end{align*}
		
		The case for $I_2^{i,j}$ can be similarly analyzed by using the fact that $\mu_1+\mu_2\neq 0$ for	any $\xi\in\mathbb{C}$.
		
		\noindent(3). On $C_r = \{\xi\in\mathbb{C}:\xi=re^{i\theta}, \frac{\pi}{2}<\theta<\pi\}$,
		since $\lim_{r\to\infty}\mu_l/( -\xi\bi ) = 1$ for $l=1,2$, we have
		\[
		\limsup_{r\to\infty}|r^2e^{\bi\mu_l\tM_2}| =
		\limsup_{r\to\infty}|r^2e^{\xi\tM_2}| =
		\limsup_{r\to\infty}r^2e^{r\cos(\theta)\bar{\sigma}_2 - r\sin(\theta)M_2}
		= \limsup_{r\to\infty}r^2e^{-r\min(\bar{\sigma}_2,M_2)} = 0.
		\]
		Thus, for sufficiently large $r$, we could make $|e^{\bi\mu_l\tM_2}|\lesssim \frac{1}{r^{2}}$, 
		so that
		\[
		\left|\frac{e^{\bi|x_1-y_1|\xi}}{A}f_{x_2,y_2}^{i,j}(\xi)\right|\lesssim\frac{1}{r^2}.
		\]
		Therefore,
		\[
		\lim_{r\to\infty}\int_{C_r}\frac{e^{\bi|x_1-y_1|\xi}}{A}f_{x_2,y_2}^{i,j}(\xi)d\xi = 0.
		\]
		Consequently, by Cauchy's theorem, we get
		\[
		I_1^{i,j} = \left( \int_{+\infty\bi}^0 +\int_{0}^{\infty}\right)\frac{e^{\bi|x_1-y_1|\xi}}{A}f_{x_2,y_2}^{i,j}(\xi)d\xi.
		\]
		On the other hand, 
		\begin{align*}
		\limsup_{r\to\infty}\left|\frac{\xi g_{x_2,y_2}^{i,j}}{e^{\xi(|x_2|+|y_2|)}}\right|
		&\eqsim \limsup_{r\to\infty}\left|\frac{\xi e^{\bi\mu_i\sqrt{\tilde{x}_2^2} + \bi\mu_j\sqrt{\tilde{y}_2^2}}}{e^{\xi(|x_2|+|y_2|)}(\mu_1+\mu_2)}\right| \eqsim \limsup_{r\to\infty}\left|\frac{e^{\xi(\sqrt{\tilde{x}_2^2} + \sqrt{\tilde{y}_2^2})}}{e^{\xi(|x_2|+|y_2|)}}\right|\lesssim 1,
		\end{align*}
		so that, for sufficiently large $r$, we can make
		$|g_{x_2,y_2}^{i,j}|\lesssim \frac{e^{r\cos\theta(|x_2|+|y_2|)}}{r}$,
		for $\xi=re^{\bi\theta}$ for $\theta\in(\pi/2,\pi).$
		Thus, 
		\begin{align*}
		&\left|  \int_{C_r}\left|e^{\bi|x_1-y_1|\xi}g_{x_2,y_2}^{i,j}(\xi)\right|d\xi\right|
		=\int_{0}^{\pi/2}e^{-r\sin\theta(|x_2|+|y_2|)}d\theta\\
		&\leq \int_{0}^{\theta_0}e^{-r\theta/2(|x_2|+|y_2|)}d\theta + \int_{\theta_0}^{\pi/2}e^{-r\sin\theta(|x_2|+|y_2|)}d\theta \lesssim \frac{1}{r(|x_2|+|y_2|)} + e^{-r\sin\theta_0(|x_2|+|y_2|)}\to 0,
		\end{align*}
		as $r\to\infty$. Here, $\theta_0>0$ is a sufficiently small constant such that
		$\sin\theta>\theta/2$ for $\theta\in(0,\theta_0)$.
		Again, Cauchy's theorem indicates that 
		\[
		I_2^{i,j} = \left( \int_{+\infty\bi}^0 +\int_{0}^{\infty}\right)e^{\bi|x_1-y_1|\xi}g_{x_2,y_2}^{i,j}(\xi)d\xi.
		\]
		\noindent(4) First, we observe that on the integral path $\xi:+\infty\bi\to
		0\to +\infty$, the following function
		\[
		h_{i,j}^l(\mu_l):=\frac{\mu_i(\mu_l)f_{x_2,y_2}^{i,j}(\sqrt{k_l^2-\mu_l^2})}{A_l(\mu_l)},
		\]
		in a sufficiently small neighborhood of
		$\mu_l=0$, has a removable singularity at $\mu_l=0$ and hence can be extended
		as a holomorphic function of $\mu_l$ in the neighborhood for $l=1,2$. Thus,
		we decompose
		\begin{align*}
		(\partial_{|x_1-y_1|}- \bi k_i)I_1^{i,j} =& \left( \int_{+\infty\bi}^0 +\int_{0}^{\infty}\right) \bi(\xi-k_i)\frac{e^{\bi|x_1-y_1|\xi}}{A}f_{x_2,y_2}^{i,j}(\xi) d\xi\\
		=&\left( \int_{+\infty\bi}^0 + \int_{0}^{k_1-\epsilon_0} + \int_{k_1+\epsilon_0}^{k_2-\epsilon_0} + \int_{k_2+\epsilon_0}^{+\infty} \right)\bi(\xi-k_i)\frac{e^{\bi|x_1-y_1|\xi}}{A}f_{x_2,y_2}^{i,j}(\xi) d\xi\\
		&-\sum_{l=1}^2\int_{k_l-\epsilon_0}^{k_l+\epsilon_0}e^{\bi|x_1-y_1|\xi}\sqrt{\frac{\xi-k_i}{k_i+\xi}} h_{i,j}^l(\sqrt{k_l^2-\xi^2}) d\xi,
		\end{align*}
		where $\epsilon_0$ is a sufficiently small positive constant. On the
		positive imaginary axis, we have the following estimate
		\begin{align*}
		&\left|\int_{+\infty\bi}^0\bi(\xi-k_i)\frac{e^{\bi|x_1-y_1|\xi}}{A}f_{x_2,y_2}^{i,j}(\xi)
     d\xi\right|=\left|\int_{0}^{+\infty}(t\bi-k_i)\frac{e^{-|x_1-y_1|t}}{A_\xi(t\bi)}f_{x_2,y_2}^{i,j}(t\bi) dt\right|\\
		&\lesssim \int_{0}^{+\infty}(t + k_i)e^{-t|x_1-y_1|}dt\lesssim\frac{1}{|x_1-y_1|},
		\end{align*}
		When $\xi\in (0,k_1-\epsilon_0)\cup (k_1+\epsilon_0,
		k_2-\epsilon_0)\cup(k_2+\epsilon_0, +\infty)$,
		$(\xi-k_i)f_{x_2,y_2}^{i,j}(\xi)/A_\xi(\xi)$ is a smooth function of $\xi$
		so that by integration by parts,
		\begin{align*}
		&\left|\left(  \int_{0}^{k_1-\epsilon_0}+\int_{k_1+\epsilon_0}^{k_2-\epsilon_0} + \int_{k_2+\epsilon_0}^{+\infty}\right)\bi(\xi-k_i)\frac{e^{\bi|x_1-y_1|\xi}}{A}f_{x_2,y_2}^{i,j}(\xi)
		d\xi\right|\\
		\leq&\left|\left(  \frac{(\xi-k_i)f_{x_2,y_2}^{i,j}(\xi)}{A_\xi(\xi)}\frac{e^{\bi|x_1-y_1|\xi}}{|x_1-y_1|}\right)\Big|_{0}^{k_1-\epsilon_0}\Big|_{k_1+\epsilon_0}^{k_2-\epsilon_0}\Big|_{k_2+\epsilon_0}^{+\infty}\right|\\
		&+\frac{1}{|x_1-y_1|}\left(  \int_{0}^{k_1-\epsilon_0}+\int_{k_1+\epsilon_0}^{k_2-\epsilon_0} + \int_{k_2+\epsilon_0}^{+\infty}\right)\left| \left(\frac{(\xi-k_i)f_{x_2,y_2}^{i,j}(\xi)}{A_{\xi}(\xi)}\right)'\right| d\xi \lesssim \frac{1}{|x_1-y_1|}.
		\end{align*}
		Here, the finiteness of the integral on the third part of the contour is straightforward, so we omit the details.
		
		On the other hand, 
		\begin{align*}
		&\left|  \left(\int_{k_1-\epsilon_0}^{k_1+\epsilon_0} +\int_{k_2-\epsilon_0}^{k_2+\epsilon_0} \right)e^{\bi|x_1-y_1|\xi}\sqrt{\frac{\xi-k_i}{k_i+\xi}}\frac{\mu_if_{x_2,y_2}^{i,j}}{A} d\xi\right|\\
		\leq&\sum_{l=1}^2\left| \left.\frac{e^{\bi|x_1-y_1|\xi}}{|x_1-y_1|}\sqrt{\frac{\xi-k_i}{k_i+\xi}}\frac{\mu_if_{x_2,y_2}^{i,j}}{A}\right|_{k_l-\epsilon_0}^{k_l+\epsilon_0} \right| +\frac{1}{|x_1-y_1|}\int_{k_l-\epsilon_0}^{k_l+\epsilon_0}\left|\left(  \sqrt{\frac{\xi-k_i}{k_i+\xi}}\right)'h_{i,j}^l(\sqrt{k_l^2-\xi^2})\right|d\xi\\
		&+\frac{1}{|x_1-y_1|}\int_{k_l-\epsilon_0}^{k_l+\epsilon_0}\left|\sqrt{\frac{\xi-k_i}{k_i+\xi}}{ h_{i,j}^l }'(\sqrt{k_l^2-\xi^2}) \frac{\xi}{\sqrt{k_l^2-\xi^2}}\right|d\xi\\
		\lesssim&\frac{1}{|x_1-y_1|}+\frac{1}{|x_1-y_1|}\sum_{l=1}^2\int_{k_l-\epsilon_0}^{k_l+\epsilon_0}\frac{d\xi}{\sqrt{|\xi-k_l|}}\lesssim\frac{1}{|x_1-y_1|}. 
		\end{align*}
		Consequently, we get
		\[
		(\partial_{|x_1-y_1|}-\bi k_i)I_{1}^{i,j} = {\cal
      O}(|x_1-y_1|^{-1}),\quad{\rm as}\ x_1\to\infty.
		\]
		The radiation condition for $I_2^{i,j}$ is similar and much easier to prove
    since $\mu_1+\mu_2$ is nonzero for
		$\xi\in [0,+\infty]\bi\cup[0,+\infty]$; we omit the details.\\
		\noindent(5). We make use of the method of stationary phase. Considering $I_1^{i,j}$, as shown in part (4), we easily get that 
		\begin{align*}
		\left|\left(\int_{+\infty\bi}^0 + \int_{0}^{k_1-\epsilon_0} + \int_{k_1+\epsilon_0}^{k_2-\epsilon_0} + \int_{k_2+\epsilon_0}^{+\infty} \right)\frac{f_{x_2,y_2}^{i,j}(\xi)}{A_{\xi}(\xi)}e^{\bi\xi|x_1-y_1|}d\xi\right| = {\cal O}(|x_1-y_1|^{-1}),\quad{\rm as}\ x_1\to\infty.
		\end{align*}
		On the neighborhood of $\xi=k_i$, we get
		\begin{align*}
		&\left|\int_{k_i-\epsilon_0}^{k_i+\epsilon_0}\frac{e^{\bi\xi|x_1-y_1|}}{\sqrt{k_i^2-\xi^2}}h_{i,j}^i(\sqrt{k_i^2-\xi^2})d\xi\right|\\
		\leq &\left| \int_{0}^{\theta_{ \epsilon_0,1 }} e^{\bi k_i\cos\theta|x_1-y_1|}h_{i,j}^i(k_i\cos\theta)d\theta \right|+\left|\int_{0}^{\theta_{\epsilon_0,2}} e^{\bi k_i\sec\theta|x_1-y_1|}h_{i,j}^i(k_i\sec\theta\bi) \sec\theta d\theta \right|\\
		=&{\cal O}(|x_1-y_1|^{-1/2})\quad{\rm as}\ |x_1-y_1|\to\infty,
		\end{align*}
    where $\theta_{\epsilon_0,1} = \arccos(1-\epsilon_0/k_i)$, and
    $\theta_{\epsilon_0,2}=\arccos((1+\epsilon_0/k_i)^{-1})$. Here, we have used
    {\cite[Prop. 3, Page 334]{stein93}} in the last inequality as for
    sufficiently small $\epsilon_0$, the integration domains contain only one
    stationary point $\theta=0$.
		
		On the neighborhood of $\xi=k_{3-i}$, we get through integration by parts that
		\begin{align*}
		&\left|\int_{k_{3-i}-\epsilon_0}^{k_{3-i}+\epsilon_0}\frac{e^{\bi\xi|x_1-y_1|}}{\sqrt{k_i^2-\xi^2}}h_{i,j}^i(\sqrt{k_i^2-\xi^2})d\xi\right|\leq {\cal O}(|x_1-y_1|^{-1})\quad{\rm as}\ |x_1-y_1|\to\infty.
		\end{align*}
		Consequently, it implies $I^{i,j}_1 = {\cal O}(|x_1-y_1|^{-1/2})$,
		as $|x_1-y_1|\to\infty$.
		
		For $I_2^{i,j}$, since the integrand itself is smooth, one easily
		obtains that $I^{i,j}_2 = {\cal O}(|x_1-y_1|^{-1})$,
		as $|x_1-y_1|\to\infty$.
	\end{proof}
\end{mylemma}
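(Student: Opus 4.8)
The plan is to read off all five properties directly from the explicit integrands, feeding in the a priori bounds already established: Lemma~\ref{lemA7} bounds $|f_{x_2,y_2}^{i,j;l}(\xi)|$ by $|A(\xi)|/|\mu_i|$, Lemma~\ref{thmA5} bounds $|A(\xi)|$ from below away from $\xi=\pm k_1,\pm k_2$, and Lemma~\ref{lemA1} locates the real roots of $A$. For (1)--(2), I would insert the decomposition $f_{x_2,y_2}^{i,j}=\sum_{l=1}^2 f_{x_2,y_2}^{i,j;l}e^{\bi\mu_l\tilde M_2}$ into $I_1^{i,j}$; Lemma~\ref{lemA7} then controls the integrand by $|\mu_i|^{-1}\sum_l|e^{\bi\mu_l\tilde M_2}|$, which, since ${\rm Im}\,\tilde M_2=M_2>0$ and $\mu_l\sim\bi|\xi|$ as $|\xi|\to\infty$, decays like $|\xi|^{-1}e^{-M_2|\xi|}$, while near $\xi=\pm k_i$ the simple zero of $A$ at $\mu_i=0$ (Lemma~\ref{lemA3}) together with $\mu_i\sim|\xi\mp k_i|^{1/2}$ leaves only an integrable $|\xi\mp k_i|^{-1/2}$ singularity; hence $I_1^{i,j}$ converges as an improper integral. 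Applying the same estimate to $(\bi\xi)^m(\bi\mu_i)^n$ times the integrand --- still exponentially small in $\xi$ --- justifies differentiating under the integral sign, and since the $\tilde x_2$-dependence of $f_{x_2,y_2}^{i,j}$ enters only through exponentials $e^{\pm\bi\mu_i\tilde x_2^+}$ and $(\bi\xi)^2+(\bi\mu_i)^2+k_i^2=k_i^2-\xi^2-\mu_i^2=0$, one obtains $(\partial_{x_1}^2+\partial_{\tilde x_2}^2+k_i^2)I_1^{i,j}=0$. For $I_2^{i,j}$ the same reasoning applies but is simpler: $\mu_1+\mu_2$ never vanishes, so $g_{x_2,y_2}^{i,j}$ has no singularity on the real axis and, because $x_2,y_2\neq0$, it decays exponentially in $\xi$.

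For (3), I would use the evenness of $f_{x_2,y_2}^{i,j}$ and $A$ to rewrite $I_1^{i,j}=\int_{-\infty}^{\infty}\frac{e^{\bi|x_1-y_1|\xi}}{A}f_{x_2,y_2}^{i,j}\,d\xi$ and then deform the half-line $(-\infty,0]$ onto the segment from $0$ to $+\infty\bi$ through the circular sector bounded by $[-r,0]$, the arc $\{re^{\bi\theta}:\pi/2\le\theta\le\pi\}$, and $[\bi r,0]$. Inside this sector the integrand is holomorphic: its only singularities $\pm k_1,\pm k_2$ lie on the real axis, and $A\neq0$ throughout $\mathbb{C}^{-+}$ by Proposition~\ref{PropA}. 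On the arc $|e^{\bi|x_1-y_1|\xi}|\le1$ (as ${\rm Im}\,\xi\ge0$), while $|e^{\bi\mu_l\tilde M_2}|\lesssim e^{-r\min(M_2,\bar\sigma_2)}$ decays faster than any power of $r$, so Lemma~\ref{lemA7} makes the arc contribution vanish as $r\to\infty$; Cauchy's theorem (with small indentations around $\pm k_l$, where the singularity is integrable) then gives the asserted identity. The same deformation handles $I_2^{i,j}$, the extra factor $e^{\bi(\mu_i\tilde y_2^++\mu_j\tilde x_2^+)}$ remaining bounded on the arc in $\mathbb{C}^{-+}$.

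For (4)--(5), I would start from the deformed contour $I_1^{i,j}=(\int_{+\infty\bi}^0+\int_0^{+\infty})\frac{e^{\bi|x_1-y_1|\xi}}{A}f_{x_2,y_2}^{i,j}\,d\xi$: the imaginary-axis piece is $\int_0^\infty(\cdots)e^{-|x_1-y_1|t}\,dt={\cal O}(|x_1-y_1|^{-1})$ by an elementary bound; on $[0,+\infty)$ I would excise small neighbourhoods of $k_1,k_2$ and integrate by parts away from them --- the amplitude, and also $(\xi-k_i)$ times it, being smooth there --- to get ${\cal O}(|x_1-y_1|^{-1})$; and near $\xi=k_i$ I would substitute $\xi=k_i\cos\theta$ on $[k_i-\epsilon_0,k_i]$ and $\xi=k_i\sec\theta$ on $[k_i,k_i+\epsilon_0]$, which turns the $|\xi-k_i|^{-1/2}$ amplitude into a smooth one with a single nondegenerate stationary point $\theta=0$, whence the stationary phase estimate \cite[Prop.~3, p.~334]{stein93} yields the ${\cal O}(|x_1-y_1|^{-1/2})$ bound of (5); the factor $\xi-k_i$ present in $(\partial_{|x_1-y_1|}-\bi k_i)I_1^{i,j}$ removes this leading term and upgrades the estimate to ${\cal O}(|x_1-y_1|^{-1})$, which is (4). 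Near $\xi=k_{3-i}$ the function $\mu_i$ is smooth, so a further integration by parts suffices. For $I_2^{i,j}$ the amplitude $g_{x_2,y_2}^{i,j}$ is continuous with at most an integrable $|\xi-k_i|^{-1/2}$ singularity in its $\xi$-derivative, so a single integration by parts already gives ${\cal O}(|x_1-y_1|^{-1})$, and the radiation condition follows in the same way.

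The main obstacle is the branch-point analysis underlying (4)--(5): one has to check that, with the nonnegative-imaginary-part branch of $\mu_i$, the $\xi$-behaviour of $f_{x_2,y_2}^{i,j}/A$ at $\xi=k_i$ is exactly of half-integer type, so that the $\cos$ and $\sec$ substitutions genuinely produce smooth amplitudes with a nondegenerate stationary point, and that $\partial_{|x_1-y_1|}-\bi k_i$ annihilates precisely the ${\cal O}(|x_1-y_1|^{-1/2})$ contribution --- this is where the radiation condition at $x_1\to\infty$ gets encoded. Everything else is routine once Lemmas~\ref{thmA5} and~\ref{lemA7} and the exponential decay of the integrands in $\xi$ are in hand.
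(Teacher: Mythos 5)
Your proposal follows essentially the same route as the paper's proof --- rewriting $I_l^{i,j}$ using evenness, using Lemmas~\ref{lemA7} and~\ref{thmA5} to control the integrand, deforming to the contour $+\infty\bi\to 0\to+\infty$ by Cauchy's theorem, then splitting off the imaginary-axis piece, integrating by parts away from the branch points, and invoking stationary phase near $\xi=k_i$. There is, however, one genuine gap in your treatment of $I_2^{i,j}$ in part (3): you justify the contour deformation by noting that the extra factor $e^{\bi(\mu_i\tilde y_2^++\mu_j\tilde x_2^+)}$ ``remains bounded'' on the arc. Boundedness is not enough. On the arc $\xi=re^{\bi\theta}$ with $\theta\in(\pi/2,\pi)$, the arc-length element contributes a factor $r$, the denominator $\mu_1+\mu_2$ only gives $1/r$, and $|e^{\bi|x_1-y_1|\xi}|=e^{-|x_1-y_1|r\sin\theta}$ is identically $1$ when $x_1=y_1$ and gives no decay near $\theta=\pi$ even when $x_1\neq y_1$; so with only boundedness of the exponential you get an $O(1)$ bound for the arc integral, not vanishing. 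What makes the deformation work is that $g_{x_2,y_2}^{i,j}$ in fact \emph{decays} on the arc: since $\mu_l\sim-\bi\xi$ one has $|e^{\bi(\mu_i\tilde y_2^++\mu_j\tilde x_2^+)}|\lesssim e^{r\cos\theta(|x_2|+|y_2|)}$, and the hypothesis that $x_2,y_2\neq 0$ (so $|x_2|+|y_2|>0$) turns this into genuine exponential decay away from $\theta=\pi/2$, which, combined with the $e^{-|x_1-y_1|r\sin\theta}$ damping (or, when $x_1=y_1$, a direct estimate of the integral near $\theta=\pi/2$), drives the arc contribution to zero. This is exactly the two-regime estimate the paper carries out; your argument needs this decay, not merely boundedness. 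The remainder of your proof --- the use of Lemma~\ref{lemA7} plus the integrable $|k_i-|\xi||^{-1/2}$ singularity for well-definedness, differentiation under the integral sign for the Helmholtz equation, integration by parts plus stationary phase with the $\cos$/$\sec$ substitutions for (4)--(5) --- matches the paper's argument and is correct.
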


The existence of Green's function for the waveguide problem \eqref{eq:green:pmly}\cb{--\eqref{eq:som}} is now obtained.
\begin{mytheorem}
	The Green's function $G(x,y)$ defined in \eqref{eq:Green:sol:+} is
  well-defined and solves the problem \eqref{eq:green:pmly}\cb{--\eqref{eq:som}}. Furthermore, $G$
  satisfies the following finiteness property
	\[
	G(x,y) = {\cal O}(|x_1-y_1|^{-1/2}),\quad {\rm as}\ |x_1-y_1|\to\infty.
	\]
	\begin{proof}
		Lemma \ref{lemgreen} (1), (2), and (4) show that $I_l^{i,j}(x_1,\tilde{x}_2;y_1,\tilde{y}_2),
    l=1,2$, are well-defined and satisfy the Helmholtz equation 
		\[
		\partial_{x_1}(\alpha_2\partial_{x_1}I_l^{i,j}) +
		\partial_{x_2}(\alpha_2^{-1}\partial_{x_2}I_l^{i,j}) + \alpha_2k_i^2
		I_l^{i,j} = 0,
		\]
		and the radiation condition
		\[
		\lim_{r\rightarrow \infty}\sqrt{|x_1-y_1|}\left( \frac{\partial I_l^{i,j}}{\partial |x_1-y_1|} -
		\bi k_lI_l^{i,j} \right) = 0.
		\]
		According to \cite{lassas_somersalo_2001}, since
		\[
		\Phi(k_l,(x_1,\tilde{x}_2);(y_1,\tilde{y}_2))=\frac{\bi}{4}H_0^{(1)}\left(k_l\sqrt{(x_1-y_1)^2 + (\tilde{x}_2-\tilde{y}_2)^2}\right),
		\] 
		directly solves 
		\[
		\partial_{x_1}(\alpha_2\partial_{x_1}\Phi) + \partial_{x_2}(\alpha_2^{-1}\partial_{x_2}\Phi) + \alpha_2k_l^2 \Phi = - \delta_y(x), 
		\]
		and satisfies
		\[
		\lim_{|x_1-y_1|}\sqrt{|x_1-y_1|}\left( \frac{\partial \Phi}{\partial |x_1-y_1|} -
		\bi k_l\Phi \right) = 0.
		\]
		The verification that $G$ is well-defined and solves problem
    \eqref{eq:green:pmly}\cb{--\eqref{eq:som}} becomes straightforward.
		
		The finiteness property can be readily proved by considering the finiteness
    of $I_l^{i,j}$ for $i,j,l=1,2$, and that
		\[
		\frac{\bi}{4}H_0^{(1)}(k_l\sqrt{(x_1-y_1)^2 + (\tilde{x}_2-\tilde{y}_2)^2})=
    {\cal O}(|x_1-y_1|^{-1/2}),\quad{\rm as}\ |x_1-y_1|\to\infty.
		\]
	\end{proof}
\end{mytheorem}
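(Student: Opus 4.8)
The plan is to take the explicit formula \eqref{eq:Green:sol:+} as given and to check, one property at a time, that it satisfies everything required by \eqref{eq:green:pmly}--\eqref{eq:som}. Its building blocks come in two kinds: analytically continued free-space Hankel Green's functions $\Phi(k_i,\cdot,\cdot)$ centered at the source $(y_1,\tilde y_2)$, at its mirror $(y_1,-\tilde y_2)$ across $\Gamma$, and at the various images (e.g. $(y_1,2\tilde M_2-\tilde x_2^+)$) generated by the artificial walls $x_2=\pm M_2$; and the oscillatory correctors $I_1^{i,j}$, $I_2^{i,j}$ whose analytic properties are already established in Lemma~\ref{lemgreen}. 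Since essentially all the hard analysis has been isolated in Lemma~\ref{lemgreen}, what remains is bookkeeping: assemble the pieces and track what each one contributes.

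First I would settle well-definedness. The correctors $I_1^{i,j}$, $I_2^{i,j}$ are well-defined improper integrals by Lemma~\ref{lemgreen}(1), and each $\Phi$-term makes sense away from its center because the square-root argument never meets the branch cut $(-\infty,0]$: for $x\ne y$ the relevant vertical difference -- an integral of $\alpha_2=1+\bi\sigma_2$ with $\sigma_2\ge0$ -- has real and imaginary parts of the same sign whenever it is nonzero, so its square has nonzero imaginary part, and adding the nonnegative number $(x_1-y_1)^2$ keeps the sum off $(-\infty,0]$; the same is true of the imaged arguments. For the equation I would recall, as in \cite{lassas_somersalo_2001} and because $\alpha_2=1$ near $\Gamma$, that $\Phi(k_i,(x_1,\tilde x_2),(y_1,\tilde y_2))$ solves $\partial_{x_1}(\alpha_2\partial_{x_1}\Phi)+\partial_{x_2}(\alpha_2^{-1}\partial_{x_2}\Phi)+\alpha_2 k_i^2\Phi=-\delta(x-y)$, and that the mirrored and imaged $\Phi$-terms solve the same equation source-free inside $\Omega^i$ (their centers fall outside the open half-strip $\Omega^i$, as the real parts of their vertical coordinates lie outside the range swept by $\mathrm{Re}\,\tilde x_2$ there), while by Lemma~\ref{lemgreen}(2) each $I_l^{i,j}$ is annihilated by the same operator. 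Summing, for $y\in\Omega^i$ the pieces of $G$ on $\Omega^i$ produce exactly the one source $-\delta(x-y)$; on $\Omega^{3-i}$, where $G$ consists of correctors only, there is no source -- consistent with $y$ lying in $\Omega^i$.

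The remaining conditions I would verify by direct substitution into \eqref{eq:Green:sol:+} and \eqref{eq:Green:res:++}--\eqref{eq:g:-+}. At $x_2=\pm M_2$ one has $\tilde x_2=\pm\tilde M_2$, and the terms carrying the factors $e^{\bi\mu_i(4\tilde M_2-\cdots)}$, $e^{\bi\mu_i(2\tilde M_2-\cdots)}$ were engineered precisely so $G$ vanishes there; at $x_2=0$ one has $\tilde x_2=\tilde x_2^+=0$, and comparing the $\Omega^i$-branch with the $\Omega^{3-i}$-branch gives $[G]=0$ and $[\partial_{x_2}G]=0$. These checks must succeed because \eqref{eq:Green:sol:+} is the inverse Fourier transform in $x_1$ of the function $\hat G$ of \eqref{eq:hG:pmly}, which already carries exactly those conditions, and the inversion has now been legitimized by the decay estimates of Lemma~\ref{lemgreen}. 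The Sommerfeld condition \eqref{eq:som} I would argue termwise: for $x\in\Omega^i$ every $\Phi(k_i,\cdot,\cdot)$ obeys it with wavenumber $k_i=k(x)$ thanks to $H_0^{(1)}(z)\sim\sqrt{2/(\pi z)}\,e^{\bi(z-\pi/4)}$ and $\sqrt{(x_1-y_1)^2+c^2}=|x_1-y_1|+{\cal O}(|x_1-y_1|^{-1})$, while every $I_l^{i,j}$ obeys it with the same $k_i$ by Lemma~\ref{lemgreen}(4); the two branches of $k(x)$ match automatically since on $\Omega^i$ only $k_i$-objects appear. Finally, the finiteness $G={\cal O}(|x_1-y_1|^{-1/2})$ follows from the same Hankel asymptotics for the $\Phi$-terms and from Lemma~\ref{lemgreen}(5) for the correctors. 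I expect the only genuine work to be this interface/boundary step -- keeping straight which of $\pm\tilde x_2$ equals $\tilde x_2^+$ on $\Omega^1$ versus $\Omega^2$ and confirming the two branches glue in a $C^1$ fashion across $\Gamma$ -- but it is tedious rather than deep, since \eqref{eq:Green:sol:+} was constructed precisely to enforce \eqref{eq:hG:pmly}.
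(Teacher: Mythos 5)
Your proposal is correct and follows essentially the same strategy as the paper: invoke Lemma~\ref{lemgreen}~(1), (2), (4), (5) for the corrector integrals $I_l^{i,j}$, invoke the result of \cite{lassas_somersalo_2001} for the PML-transformed $\Phi$ and its imaged copies, and assemble. You simply fill in the bookkeeping that the paper labels ``straightforward''---the branch-cut argument for well-definedness of the $\Phi$-terms, the source-free observation for the imaged centers, and the termwise matching of boundary, interface, and radiation conditions.
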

\subsection{Existence of the Green's function with UPML}
In this section, we discuss how to construct the Green's function for the uniaxial PML problem \eqref{eq:upml:green} through the waveguide problem \eqref{eq:green:pmly}\cb{--\eqref{eq:som}}. To study the Green's function with rectangular PML truncation, we first analytically extend the domain of $G(x,y)$ for the waveguide problem \eqref{eq:green:pmly}\cb{--\eqref{eq:som}} from $x_1,y_1\in \mathbb{R}$ to $\tx_1,\ty_1\in
\mathbb{C}^{++}\cup\mathbb{C}^{--}$ by the PML transformation
\[
\tilde{x}_1=x_1 + \bi \int_{0}^{x_1}\sigma_1^p(t)dt,\quad\tilde{y}_1=y_1 + \bi \int_{0}^{y_1}\sigma_1^p(t)dt,
\]
where the absorbing function along the $x_1$-axis takes the form 
\[
\sigma_1^p(x_1) = \left\{
\begin{array}{ll}
\sigma_1(x_1) & |x_1|<M_1;\\
\sigma_1(x_1-2nM_1) &(2n-1)M_1<x_1<(2n+1)M_1,\quad n\in \mathbb{Z}\backslash\{0\}.
\end{array}
\right.
\]
One issue is that the real path used in (\ref{eq:Green:res:++}-\ref{eq:Green:layer:-+}) is
not usable to perform the extension since $e^{\bi(\tilde{x}_1-\tilde{y}_1)\xi}$
blows up in one of the two cases $\xi\to\pm \infty$. To resolve this, we make
use of Lemma \ref{lemgreen} by changing the real path to
\[
{\rm EXT}: +\infty\bi \to 0 \to +\infty,
\]
so that we can define, for instance,
\begin{equation}
\label{eq:Greennew:res:++}
G_{\rm res}^{1,1}(\tilde{x};\tilde{y}) = -\Phi(k_1,(\tilde{x}_1, 2\tilde{M}_2-\tilde{x}_2); (\tilde{y}_1,\tilde{y}_2)) + \frac{\bi}{4\pi}\int_{\rm EXT}\frac{e^{\bi\xi(\tilde{x}_1-\tilde{y}_1)^+}}{A}f_{x_2,y_2}^{1,1}(\xi)d\xi,
\end{equation}
where we recall $a^+=\sqrt{a^2}$ is {defined in} the branch with a nonnegative real part. One similarly defines the other terms $G_{\rm res}^{i,j}$ and $G_{\rm layer}^{i,j}$ for $i,j=1,2$.

Consequently, we can make an analytic extenstion of $G(x,y)$ by defining
\begin{align}
\label{eq:Green:sol:ext}
\tilde{G}(x,y)=& G_{\rm layer}^{i,j}(\tilde{x},\tilde{y})+ G_{\rm res}^{i,j}(\tilde{x},\tilde{y}),
\end{align} 
for $x\in \Omega^i, y\in \Omega^j$. By following a similar argument as in Lemma \ref{lemgreen}, we can show that $\tilde{G}$ is well-defined and satisfies the modified Helmholtz equation.
\begin{mytheorem}
	$\tilde{G}$ solves the following problem
	\begin{align}\label{eq:green:ypml:ext}
	\begin{cases}
	&\frac{\partial}{\partial x_1}\left( \frac{\alpha_2}{\alpha_1^p}\frac{\partial \tilde{G}}{\partial x_1}
	\right)+ \frac{\partial}{\partial x_2}\left( \frac{\alpha_1^p}{\alpha_2}\frac{\partial
		\tilde{G}}{\partial x_2} \right) + \alpha_1^p\alpha_2 k^2\tilde{G} = -\delta_y(x),\quad x,y\in\mathbb{R}\times(-M_2,M_2),\\
	&[\tilde{G}] = 0,\quad [\partial_{x_2} \tilde{G}] = 0,\quad{\rm on}\quad x_2=0,\\
	&\tilde{G} = 0,\quad{\rm on}\quad x_2=\pm M_2,
	\end{cases}
	\end{align}
	where $\alpha_1^p(x_1)=1+\bi\sigma_1^p(x_1)$.  
\end{mytheorem}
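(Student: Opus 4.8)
The plan is to read everything off the explicit representation \eqref{eq:Green:sol:ext}. There, $\tilde G$ is a finite sum of two kinds of terms: (i) terms $\pm\Phi(k_i,(\tilde x_1,z_2),(\tilde y_1,\tilde y_2))$ with $z_2$ equal to $\tilde x_2$ or to $2\tilde{M}_2-\tilde x_2$, i.e. the free-space Helmholtz fundamental solution composed with the complex stretchings $\tilde x_1(x_1),\tilde y_1(y_1)$ and with the $x_2$-stretchings already present; and (ii) the deformed-contour integrals $\int_{\rm EXT}\frac{e^{\bi\xi(\tilde x_1-\tilde y_1)^+}}{A}f^{i,j}_{x_2,y_2}(\xi)\,d\xi$ and $\int_{\rm EXT}e^{\bi\xi(\tilde x_1-\tilde y_1)^+}g^{i,j}_{x_2,y_2}(\xi)\,d\xi$. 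First I would check that the integrals in (ii) are well defined and holomorphic in $(\tilde x_1,\tilde y_1)$ whenever $\tilde x_1,\tilde y_1$ arise from the $x_1$-stretching, so that $\tilde x_1-\tilde y_1\in\ol{\mCpp\cup\mCmm}$ and hence $(\tilde x_1-\tilde y_1)^+=\sqrt{(\tilde x_1-\tilde y_1)^2}$ lies in the closed first quadrant; on EXT this makes $|e^{\bi\xi(\tilde x_1-\tilde y_1)^+}|\le 1$, so the integrands are dominated in modulus by those already handled in Lemma~\ref{lemgreen}. Combined with the decay $|e^{\bi\mu_l\tilde{M}_2}|\lesssim e^{-\bar\sigma_2|\xi|}$ on the imaginary part of EXT and $\lesssim e^{-M_2|\xi|}$ on its real part, and the $\mathcal{O}(|\xi\mp k_i|^{-1/2})$ behaviour near $\xi=\pm k_i$ (Lemmas~\ref{thmA5} and~\ref{lemA7}), one may differentiate under the integral sign in $\tilde x_1,\tilde y_1,x_2,y_2$ arbitrarily often; this part is verbatim the argument of Lemma~\ref{lemgreen}(1),(3) and uses no new idea.

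Next I would establish \eqref{eq:green:ypml:ext} away from $x=y$. The integrals in (ii) are the analytic continuations in $x_1-y_1$ of the functions $I^{i,j}_l$ of Lemma~\ref{lemgreen}, hence — by part~(2) of that lemma, continued to complex argument — they are annihilated by $\partial_{\tilde x_1}^2+\partial_{\tilde x_2}^2+k_i^2$. Since $\alpha_2$ depends on $x_2$ only, a routine chain-rule computation with $d\tilde x_1/dx_1=\alpha_1^p$ and $d\tilde x_2/dx_2=\alpha_2$ (in which $\alpha_1^p$ may be pulled through $\partial_{x_2}$ and $\alpha_2$ through $\partial_{x_1}$) rewrites this as the homogeneous form of \eqref{eq:green:ypml:ext}. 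The same computation applied to each $\Phi(k_i,(\tilde x_1,z_2),(\tilde y_1,\tilde y_2))$ shows these terms also solve the homogeneous equation, the one exception being $\Phi(k_i,(\tilde x_1,\tilde x_2),(\tilde y_1,\tilde y_2))$, whose argument is the only one vanishing somewhere in the strip (precisely at $x=y$, since the stretchings are injective). For that term the Lassas--Somersalo computation already invoked in Subsection~\ref{subsec:Gw} (cf. \cite{lassas_somersalo_2001}) gives $\partial_{x_1}(\tfrac{\alpha_2}{\alpha_1^p}\partial_{x_1}\Phi)+\partial_{x_2}(\tfrac{\alpha_1^p}{\alpha_2}\partial_{x_2}\Phi)+\alpha_1^p\alpha_2 k_i^2\Phi=-\delta(x-y)$. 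Summing the pieces yields the full equation in \eqref{eq:green:ypml:ext}.

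The interface conditions $[\tilde G]=[\partial_{x_2}\tilde G]=0$ on $x_2=0$ and the boundary conditions $\tilde G=0$ on $x_2=\pm M_2$ involve only the $x_2$-dependence of $\tilde G$, which is untouched by the $x_1$-continuation: they were built into the explicit formula for $G$ and verified in Subsection~\ref{subsec:Gw}, so they pass to $\tilde G$ unchanged (equivalently, they hold for real $\tilde x_1,\tilde y_1$ and persist by analyticity). The main obstacle I anticipate is part~(i): confirming that the EXT-integrals depend \emph{holomorphically}, not merely continuously, on the complex parameters and that the contour deformation commutes with all the differentiations — but with $|e^{\bi\xi(\tilde x_1-\tilde y_1)^+}|\le1$ on EXT and the remaining factors decaying exponentially in $|\xi|$ away from $\xi=\pm k_i$, this is a dominated-convergence/Morera argument parallel to Lemma~\ref{lemgreen}. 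A secondary nuisance is that $(\tilde x_1-\tilde y_1)^+$ is only Lipschitz at $\tilde x_1=\tilde y_1$; there, however, $\tilde G$ is dominated by the singular $\Phi$-term, and the even dependence of the integral terms on $\tilde x_1-\tilde y_1$ makes their odd-order $x_1$-derivatives continuous, so the equation holds across $x_1=y_1$ with exactly the stated source.
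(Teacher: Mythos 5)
Your argument matches the paper's intended strategy exactly: the paper itself gives no proof of this theorem beyond the remark ``By following a similar argument as in Lemma~\ref{lemgreen}, we can show that $\tilde G$ is well-defined and satisfies the modified Helmholtz equation,'' and your proposal fleshes out precisely that program — contour deformation to EXT so $|e^{\bi\xi(\tilde x_1-\tilde y_1)^+}|\le 1$, dominated-convergence/Morera to justify holomorphy and differentiation under the integral, the chain-rule conversion of $\partial^2_{\tilde x_1}+\partial^2_{\tilde x_2}+k_i^2$ into the divergence form \eqref{eq:green:ypml:ext} using $d\tilde x_1/dx_1=\alpha_1^p$ and $d\tilde x_2/dx_2=\alpha_2$, the Lassas--Somersalo computation for the singular $\Phi$-term, and the persistence of the interface/boundary conditions under the $x_1$-continuation. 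The one point you rightly flag as delicate — regularity of the EXT-integrals across $\tilde x_1=\tilde y_1$, where $(\tilde x_1-\tilde y_1)^+$ is only Lipschitz — is resolved correctly by the evenness observation, which is exactly what the original real-axis integrals (written over $|x_1-y_1|$) encode; the paper leaves this implicit, so your treatment is, if anything, more complete.
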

To construct the Green's function for equation \eqref{eq:upml:green}, we define an infinite series based on
$\tilde{G}$
\begin{equation}
\label{eq:inf:series:G}
G_{\rm PML}(x,y) = \sum_{n=-\infty}^{\infty}\left[ -\tilde{G}(x'+ne_1,y) + \tilde{G}(x+ne_1,y)  \right], \mbox{ for } x,y\in B_{\rm ex},
\end{equation}
where
$x' = (2M_1 - x_1,x_2),$ and $e_1 = (4M_1,0)$.
For $n\in\mathbb{Z}$ and $x_2y_2\geq 0$, define
\[
a_{2n}^{x_1,y_1} = (4n\tM_1+\tx_1-\ty_1)^+\ {\rm and}\ 
a_{2n+1}^{x_1,y_1}= ((4n+2)\tM_1-\tx_1-\ty_1)^+,
\]
and
\[
b_1^{x_2,y_2} = (\tilde{x}_2 - \tilde{y}_2)^+,   b_2^{x_2,y_2} = (\tilde{x}_2 +
\tilde{y}_2)^+,\ {\rm and}\ b_3^{x_2,y_2} = 2\tilde{M}_2 - b_2^{x_2,y_2}.
\] {By properly rearranging the terms in (\ref{eq:inf:series:G}),
    }
we obtain that for $i=1,2$,  when $x,y\in B_{ex}^i$, it holds
\begin{align}
\label{eq:infseries:G:1}
G_{\rm PML}(x,y) =& G_{\rm layer}^{i,i}(\tx,\ty) + \frac{\bi}{4\pi}\int_{\rm EXT}e^{\bi\xi a_0^{x_1,y_1}}\frac{f_{x_2,y_2}^{i,i}(\xi)}{A} d\xi\nonumber\\
&+\frac{\bi}{4\pi}\sum_{n=-\infty,n\neq 0}^{\infty}(-1)^n\int_{\rm EXT} e^{\bi\xi a_{n}^{x_1,y_1}}\left(\frac{f_{x_2,y_2}^{i,i}(\xi)}{A} +  g_{x_2,y_2}^{i,i}(\xi)\right)d\xi \nonumber\\
&+\frac{\bi}{4\pi}\sum_{n=-\infty,n\neq 0}^{\infty}(-1)^n\sum_{j=1}^2H_0^{(1)}(k_i\sqrt{( a_{n}^{x_1,y_1} )^2 + ( b_j^{x_2,y_2} )^2})\nonumber\\
&-\frac{\bi}{4\pi}\sum_{n=-\infty}^{\infty}(-1)^nH_0^{(1)}(k_i\sqrt{( a_{n}^{x_1,y_1} )^2 + ( b_3^{x_2,y_2} )^2}),
\end{align}
and when $x\in B_{\rm ex}^i$ and $y\in B_{\rm ex}^{3-i}$ (or vice versa),
\begin{align}
\label{eq:infseries:G:2}
G_{\rm PML}(x,y) =&G_{\rm layer}^{i,3-i}(\tx,\ty) +\frac{\bi}{2\pi}\int_{\rm EXT}e^{\bi \xi a_n^{x_1,y_1}}\frac{f_{x_2,y_2}^{i,3-i}(\xi)}{A}d\xi\nonumber\\
&+\frac{\bi}{2\pi}\sum_{n=-\infty,n\neq 0}^{\infty}(-1)^n\int_{\rm EXT} e^{\bi\xi a_{n}^{x_1,y_1}}\left(  \frac{f_{x_2,y_2}^{i,3-i}(\xi)}{A}+g_{x_2,y_2}^{i,3-i}(\xi)\right)d\xi.
\end{align}
We now show {that the two series in (\ref{eq:infseries:G:1}) and
  (\ref{eq:infseries:G:2}) are absolutely convergent so that the rearrangement
  of terms in (\ref{eq:inf:series:G}) is reasonable,} and that $G_{\rm PML}$ is
the Green's function that satisfies \eqref{eq:upml:green}. To this purpose, we
need to estimate the terms in (\ref{eq:infseries:G:1}) and
(\ref{eq:infseries:G:2}). Before this, we first extend the domain from
$\ol{\mCmp\cup\mCpm}$ to a larger one.
\begin{mylemma}\label{lemA10}
	There exists a constant $\delta\in(0,1)$, such that $A(\xi)\neq 0$ for
	any $\xi\in E_{\delta} = \{\xi\in\ol{\mCpp}: {\rm Re}(\xi)\leq
	\delta k_1,{\rm Im}(\xi)\leq \delta k_1\}$. Furthermore, 
	\[
	|\mu_1+\mu_2|\lesssim|A|,
	\]
	for any $\xi\in E_{\delta}$.
	\begin{proof}
		We first prove the existence of $E_{\delta}$. Suppose otherwise there
		exist a sequence of $\{\delta_n\}_{n=1}^{\infty}$ with $\delta_n>0$ and
		$\lim_{n\to \infty}\delta_n=0$. A sequence of
		$\{\xi_n\}$ with $\xi_n\in\ol{\mCpp}$ and $\max({\rm Re}(\xi),{\rm
			Im}(\xi))\leq \delta_nk_1$, such that
		$\lim_{n\to\infty}A_\xi(\xi_n) = 0$.
		As $\lim_{n\to\infty}\xi_n = 0$, we directly get $A(0)=0$ which is in
		contradiction with Lemma \ref{lemA1}. Consequently, there must exist a box
		$E_{\delta}$ with $\delta>0$ such that $A\neq 0$ for any $\xi\in
		E_{\delta}$.
		
		We now prove the estimate. Suppose there exist a sequence of
		$\{\xi_n\}_{n=1}^{\infty}$ with $\lim_{n\to\infty}\xi_n=\xi_0\in
		E_{\delta}$ such that
		\[
		\lim_{n\to\infty} \left| \frac{A(\xi_n)}{\mu_{1,n}+\mu_{2,n}} \right|
		= 0,
		\]
		where $\mu_{l,n}=\sqrt{k_l^2-\xi_n^2}$. Since $\mu_{1,n}+\mu_{2,n}\geq
		\sqrt{k_2^2-k_1^2}>0$, we have to enforce $A(\xi_0)=0$, which is
		impossible due to the choice of $E_{\delta}$.
	\end{proof}
\end{mylemma}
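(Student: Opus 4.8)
The plan is to reduce the lemma to a compactness-and-continuity argument anchored at the origin. First note that for $\delta<1$ every $\xi\in E_\delta$ satisfies $|\xi|^2\le 2\delta^2k_1^2<k_1^2<k_2^2$, so each $\mu_j=\sqrt{k_j^2-\xi^2}$ stays away from its branch points $\pm k_1,\pm k_2$; hence $\mu_1,\mu_2$, and therefore $A$ as given in \eqref{eq:def:A}, are holomorphic --- in particular continuous --- on a neighbourhood of the closed square $E_\delta$. By Lemma~\ref{lemA1} the only real zeros of $A$ are $\pm k_1,\pm k_2$, so in particular $A(0)\ne0$ (one could also see this directly from \eqref{eq:def:A} at $\mu_1=k_1$, $\mu_2=k_2$ using $|\epsilon_j|<1$).

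For the first assertion I would argue by contradiction: were there no admissible $\delta$, then for each $n$ there would exist $\xi_n\in\ol{\mCpp}$ with $\max(\mathrm{Re}\,\xi_n,\mathrm{Im}\,\xi_n)\le k_1/n$ and $A(\xi_n)=0$; passing to the limit $\xi_n\to0$ and using continuity of $A$ near the origin gives $A(0)=0$, contradicting the previous paragraph. Thus some $\delta\in(0,1)$ works, and by compactness of $E_\delta$ together with continuity of $A$, in fact $|A|\ge c>0$ on $E_\delta$ for some constant $c$.

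For the estimate $|\mu_1+\mu_2|\lesssim|A|$, observe that $|\mu_1+\mu_2|$ is bounded above on $E_\delta$ since $|\mu_j|\le(k_j^2+|\xi|^2)^{1/2}\le(k_j^2+2\delta^2k_1^2)^{1/2}$. Combining the uniform lower bound $|A|\ge c$ from the first part with this upper bound yields $|\mu_1+\mu_2|\le C|A|$ on $E_\delta$, which is the claim; here the implied constant may depend on $k_j,\bar\sigma_j,d_j,L_j$, as is permitted throughout Section~3. Equivalently one can run a single contradiction argument: an unbounded ratio $|\mu_1+\mu_2|/|A|$ would produce $\xi_n\to\xi_0\in E_\delta$ with $A(\xi_0)=0$, using that $|\mu_1+\mu_2|\ge(k_2^2-k_1^2)/(|\mu_1|+|\mu_2|)>0$ is bounded away from $0$ --- which follows from the identity $(\mu_1+\mu_2)(\mu_2-\mu_1)=k_2^2-k_1^2$ --- again impossible.

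I do not expect a genuine obstacle here: the lemma is essentially continuity of $A$ at $\xi=0$ plus Lemma~\ref{lemA1}. The only points needing a little care are keeping $\delta<1$ so that $E_\delta$ lies strictly inside $\{|\xi|<k_1\}$, where $\mu_1,\mu_2$ (hence $A$) are single-valued and analytic, and --- if one takes the one-shot contradiction route --- the lower bound on $|\mu_1+\mu_2|$ via the identity above.
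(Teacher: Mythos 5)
Your argument is correct and essentially the same compactness-and-continuity argument the paper uses: both rest on Lemma~\ref{lemA1} giving $A(0)\neq 0$ to extract an admissible $\delta$, and then a sequential/compactness argument on the closed square $E_\delta$ to bound $|\mu_1+\mu_2|/|A|$. Your formulation via a uniform lower bound $|A|\ge c>0$ on $E_\delta$ and an upper bound on $|\mu_1+\mu_2|$ is a slightly cleaner packaging of the paper's sequential contradiction, but the substance is identical.
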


\begin{mylemma}\label{lemA11}
	For all $n\in\mathbb{Z}\backslash\{0\}$, $l,m\in\{0,1,2\}$ and $x\in
	B_{\rm ex}^i$ and $y\in B_{\rm ex}^j$, $i,j=1,2$, it holds
	\[
	\left| \xi^l\mu_i^me^{\bi\xi a_n^{x_1,y_1}}\left(\frac{f_{x_2,y_2}^{i,j}(\xi)}{A} +
	g_{x_2,y_2}^{i,j}(\xi)\right) \right|\lesssim\frac{(\xi_1^2+\xi_2^2)^{l/2}(k_2^2+\xi_1^2+\xi_2^2)^{m/2} }{\sqrt{k_2^2-k_1^2}} e^{-2|n|M_1\xi_2 - 2|n|\bar{\sigma}_1\xi_1},
	\]
	for any $\xi=\xi_1+\bi\xi_2\in E_{\delta}$ and 
	\[
	\left|\xi^l\mu_i^m e^{\bi\xi a_n^{x_1,y_1}}\left(\frac{f_{x_2,y_2}^{i,j}(\xi)}{A} +
	g_{x_2,y_2}^{i,j}(\xi)\right) \right|\lesssim\frac{(\xi_1^2+\xi_2^2)^{l/2}(k_2^2+\xi_1^2+\xi_2^2)^{m/2}}{|\mu_i|} e^{-2|n|M_1\xi_2 - 2|n|\bar{\sigma}_1\xi_1},
	\]
	for any $\xi=\xi_1+\bi \xi_2\in \partial \mCpp$, where $\partial \mCpp$
  consists of the positive real and imaginary axis.
	\begin{proof}
		For $\xi=\xi_1+\bi \xi_2\in \partial \mCpp$, it holds 
		\[
		\left|\frac{f_{x_2,y_2}^{i,j}(\xi)}{A} + g_{x_2,y_2}^{i,j}(\xi)\right|\lesssim
		\sum_{l=1}^{2}\frac{1}{|\mu_i|}\left|e^{\bi\mu_l\tilde{M}_2}\right|\lesssim
		\frac{1}{|\mu_i|} + \frac{1}{|\mu_1+\mu_2|}\lesssim\frac{1}{|\mu_i|}.
		\]
		In $E_{\delta}$, since $f_{x_2,y_2}^{i,j}$ has no singularities, we have
		by Lemma \ref{lemA10} that
		\[
		\left|\frac{f_{x_2,y_2}^{i,j}(\xi)}{A}+g_{x_2,y_2}^{i,j}(\xi)\right|\lesssim\frac{1}{\sqrt{k_2^2-k_1^2}}.
		\]
		For any $\xi=\xi_1+\bi\xi_2\in E_{\delta}\cup \partial \mCpp$, it holds
		$\left|e^{\bi\xi a_n^{x_1,y_1}}\right|\lesssim e^{-2|n|M_1\xi_2 - 2|n|\bar{\sigma}_1\xi_1}$,
		and 
		\[
		|\xi^l\mu_i^m|\leq (\xi_1^2+\xi_2^2)^{l/2}|\mu_i|^{m/2}\leq (\xi_1^2+\xi_2^2)^{l/2}(k_2^2+\xi_1^2+\xi_2^2)^{m/2}. 
		\]
		Consequently, the estimates follow from the above inequalities.
	\end{proof}
\end{mylemma}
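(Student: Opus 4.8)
The plan is to bound the modulus of the integrand one factor at a time. Writing $\xi=\xi_1+\bi\xi_2$ and $a_n^{x_1,y_1}=a_n^R+\bi a_n^I$, split
\[
\Bigl|\xi^l\mu_i^m e^{\bi\xi a_n^{x_1,y_1}}\Bigl(\tfrac{f_{x_2,y_2}^{i,j}(\xi)}{A}+g_{x_2,y_2}^{i,j}(\xi)\Bigr)\Bigr|
=|\xi|^l\,|\mu_i|^m\cdot e^{-\xi_1 a_n^I-\xi_2 a_n^R}\cdot\Bigl|\tfrac{f_{x_2,y_2}^{i,j}(\xi)}{A}+g_{x_2,y_2}^{i,j}(\xi)\Bigr|.
\]
The algebraic factor is controlled uniformly on the whole closed first quadrant: $|\xi|^l=(\xi_1^2+\xi_2^2)^{l/2}$, and since $\mu_i^2=k_i^2-\xi^2$ and $\kappa=k_2/k_1>1$,
\[
|\mu_i|^m=|k_i^2-\xi^2|^{m/2}\le(k_i^2+|\xi|^2)^{m/2}\le(k_2^2+\xi_1^2+\xi_2^2)^{m/2},
\]
which is precisely the numerator on the right-hand side of both claimed inequalities. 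It therefore suffices to estimate the exponential and the symbol factors, separately on $E_\delta$ and on $\partial\mCpp$.

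For the symbol factor I would combine Lemma~\ref{lemA7} with the decomposition $f_{x_2,y_2}^{i,j}=\sum_{l=1}^2 f_{x_2,y_2}^{i,j;l}e^{\bi\mu_l\tilde{M}_2}$. On $\partial\mCpp$ one checks ${\rm Im}(\mu_l\tilde{M}_2)\ge0$, hence $|e^{\bi\mu_l\tilde{M}_2}|\le1$, so Lemma~\ref{lemA7} yields $|f_{x_2,y_2}^{i,j}/A|\lesssim|\mu_i|^{-1}$, while $|g_{x_2,y_2}^{i,j}|\lesssim|\mu_1+\mu_2|^{-1}$ is immediate from \eqref{eq:g:++}--\eqref{eq:g:-+}; together with the elementary bound $|\mu_1+\mu_2|\ge|\mu_i|$ (a one-line verification on each of the arcs $|\xi|\le k_1$, $k_1\le|\xi|\le k_2$, $|\xi|\ge k_2$ of the real axis and on $\bi[0,\infty)$) this gives $|f_{x_2,y_2}^{i,j}/A+g_{x_2,y_2}^{i,j}|\lesssim|\mu_i|^{-1}$. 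On the compact box $E_\delta$ the functions $f_{x_2,y_2}^{i,j}$ and $g_{x_2,y_2}^{i,j}$ are holomorphic and bounded (their only $\mu_i^{-1}$ singularities sit at $\xi=\pm k_i\notin E_\delta$), whereas Lemma~\ref{lemA10} gives $|A|\gtrsim|\mu_1+\mu_2|\ge\sqrt{k_2^2-k_1^2}$; hence $|f_{x_2,y_2}^{i,j}/A+g_{x_2,y_2}^{i,j}|\lesssim(k_2^2-k_1^2)^{-1/2}$ there.

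For the exponential factor the point is to locate the imaged sources. Using $\tilde{M}_1=M_1+\bi\bar{\sigma}_1$ with $M_1,\bar{\sigma}_1>0$, the identities ${\rm Re}(\tilde{x}_1)=x_1$ and $|{\rm Im}(\tilde{x}_1)|\le\bar{\sigma}_1$ (and likewise for $\tilde{y}_1$), together with the branch convention defining $(\cdot)^+$, I would show that for each $n\ne0$ the $(\cdot)^+$ selection applied to $4k\tilde{M}_1+\tilde{x}_1-\tilde{y}_1$ (for $n=2k$) or to $(4k+2)\tilde{M}_1-\tilde{x}_1-\tilde{y}_1$ (for $n=2k+1$) produces a point with $a_n^R\ge2|n|M_1$ and $a_n^I\ge2|n|\bar{\sigma}_1$ --- geometrically, the relevant images are spaced $2\tilde{M}_1$ apart in the $\tilde{x}_1$-plane, so the $|n|$-th one contributes a displacement of order $2|n|M_1$ in real part and $2|n|\bar{\sigma}_1$ in imaginary part. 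Since $\xi_1,\xi_2\ge0$ on $\ol\mCpp$, it follows that $|e^{\bi\xi a_n^{x_1,y_1}}|=e^{-\xi_1 a_n^I-\xi_2 a_n^R}\lesssim e^{-2|n|\bar{\sigma}_1\xi_1-2|n|M_1\xi_2}$. Multiplying the three bounds proves both assertions.

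The algebraic and symbol estimates are routine once Lemmas~\ref{lemA7} and \ref{lemA10} are available. The step that requires care is the bookkeeping for the exponential factor: one must weigh the dominant term $4n\tilde{M}_1$ against the perturbation $\tilde{x}_1-\tilde{y}_1$ (resp.\ $-\tilde{x}_1-\tilde{y}_1$), which is bounded uniformly in $n$, in order to decide which square-root branch $(\cdot)^+$ selects, and then to verify that the real and imaginary parts of $a_n^{x_1,y_1}$ grow at least linearly in $|n|$ uniformly in $x_1,y_1\in[-M_1,M_1]$. This uniform linear growth is exactly what is needed afterwards to sum the $n$-series in \eqref{eq:infseries:G:1}--\eqref{eq:infseries:G:2} absolutely.
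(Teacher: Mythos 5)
Your proposal follows the paper's proof essentially verbatim: split the integrand into the algebraic factor $|\xi^l\mu_i^m|$, the exponential factor $|e^{\bi\xi a_n^{x_1,y_1}}|$, and the symbol $\bigl|f_{x_2,y_2}^{i,j}/A+g_{x_2,y_2}^{i,j}\bigr|$, then bound the symbol on $\partial\mCpp$ via Lemma~\ref{lemA7} (plus $|e^{\bi\mu_l\tilde M_2}|\le 1$ and $|\mu_1+\mu_2|\gtrsim|\mu_i|$ there) and on $E_\delta$ via Lemma~\ref{lemA10}, exactly as the paper does. One small caveat: your claim that $(\cdot)^+$ applied to $2n\tilde M_1\pm\tilde x_1\mp\tilde y_1$ yields ${\rm Re}\,a_n^{x_1,y_1}\ge 2|n|M_1$ and ${\rm Im}\,a_n^{x_1,y_1}\ge 2|n|\bar\sigma_1$ is slightly too strong --- Lemma~\ref{lemab} only guarantees $(2|n|-2)M_1$ and $(2|n|-2)\bar\sigma_1$, and indeed $a_1^{x_1,y_1}=(2\tilde M_1-\tilde x_1-\tilde y_1)^+$ can vanish when $x_1=y_1=M_1$ --- but this off-by-two is already present in the lemma's own statement, and it is harmless since the $(2|n|-2)$ decay still sums geometrically in Lemma~\ref{lemA12}.
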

The following lemma shows the contribution from all the other terms except $n=0$ in the infinite series $G_{\rm PML}$ is exponentially small. 
\begin{mylemma}\label{lemA12}
	For all $n\in\mathbb{Z}\backslash\{0\}$ and $l,m\in\{0,1,2\}$,
	\[
	\left|\int_{\rm EXT}\xi^l\mu_i^me^{\bi \xi a_n^{x_1,y_1}}\left(\frac{f_{x_2,y_2}^{i,j}(\xi)}{A} + g_{x_2,y_2}^{i,j}(\xi)\right)d\xi\right|\lesssim
	\left(  e^{-2|n|M_1\delta k_1}+ e^{-2|n|\bar{\sigma}_1\delta k_1}\right).
	\]
	\begin{proof}
		We define the following path:
		\[
		P_{\delta}: \xi\in +\infty\bi\to \delta k_1\bi \to \delta k_1\bi + \delta k_1\to
		\delta k_1 \to \infty.
		\]
		As $A\neq 0$ in $E_{\delta}$, we get by Cauchy's theorem that
		\begin{align*}
		&\int_{\rm EXT}\xi^l\mu_i^me^{\bi \xi a_n^{x_1,y_1}}\left(\frac{f_{x_2,y_2}^{i,j}(\xi)}{A} + g_{x_2,y_2}^{i,j}(\xi)\right)d\xi
		= \int_{P_{\delta}}\xi^l\mu_i^me^{\bi \xi a_n^{x_1,y_1}}\left(\frac{f_{x_2,y_2}^{i,j}(\xi)}{A} + g_{x_2,y_2}^{i,j}(\xi)\right)d\xi.
		\end{align*}
		By Lemma \ref{lemA11}, we get the following estimates
		\begin{align*}
		&\left|\int_{P_{\delta}}\xi^l\mu_i^me^{\bi \xi a_n^{x_1,y_1}}\left(\frac{f_{x_2,y_2}^{i,j}(\xi)}{A} + g_{x_2,y_2}^{i,j}(\xi)\right)d\xi\right|\\
		\lesssim& \int_{\delta k_1}^{+\infty} \xi_2^l(k_2^2+\xi_2^2)^{m/2}e^{-2|n|M_1\xi_2}d\xi_2 +\int_{0}^{\delta k_1}e^{-2|n|M_1k_1\delta }e^{-2|n|\bar{\sigma}_1\xi_1}d\xi_1 \\
		&+ \int_{0}^{\delta k_1}e^{-2|n|\bar{\sigma}_1\delta k_1}e^{-2|n|M_1k_1\xi_2}d\xi_2+\int_{\delta k_1}^{+\infty} \frac{\xi_1^l(k_2^2+\xi_1^2)^{m/2}e^{-2|n|\bar{\sigma}_1\xi_1}}{|\sqrt{k_i^2-\xi_1^2}|}d\xi\\
		\lesssim& e^{-2|n|M_1\delta k_1} + e^{-2|n|M_1k_1\delta } + e^{-2|n|\bar{\sigma}_1\delta k_1} + e^{-2|n|\bar{\sigma}_1\delta k_1}.
		\end{align*}
	\end{proof}
\end{mylemma}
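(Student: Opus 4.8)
\emph{Plan.} The idea is to push the real contour $\mathrm{EXT}: +\infty\bi\to0\to+\infty$ off the real axis into the closed first quadrant, where the factor $e^{\bi\xi a_n^{x_1,y_1}}$ — which satisfies $|e^{\bi\xi a_n^{x_1,y_1}}|\lesssim e^{-2|n|(M_1\,\mathrm{Im}\,\xi+\bar\sigma_1\,\mathrm{Re}\,\xi)}$ by Lemma~\ref{lemA11} — becomes genuinely decaying, and then to estimate the deformed integral leg by leg. Observe that the set $E_\delta$ of Lemma~\ref{lemA10} is precisely the closed square $[0,\delta k_1]\times[0,\delta k_1]$. I would replace $\mathrm{EXT}$ by
\[
P_\delta:\ +\infty\bi\ \to\ \delta k_1\bi\ \to\ \delta k_1\bi+\delta k_1\ \to\ \delta k_1\ \to\ +\infty ,
\]
which coincides with $\mathrm{EXT}$ along the common tails (the imaginary axis above $\delta k_1\bi$ and the real axis beyond $\delta k_1$) and differs from it exactly by the boundary of $E_\delta$.

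To legitimise $\int_{\mathrm{EXT}}=\int_{P_\delta}$ I would check two things. (i) The full integrand $\xi^l\mu_i^m e^{\bi\xi a_n^{x_1,y_1}}\big(f_{x_2,y_2}^{i,j}/A+g_{x_2,y_2}^{i,j}\big)$ is holomorphic on $E_\delta$: since $\delta<1$ the branch points $\pm k_1,\pm k_2$ of $\mu_i=\sqrt{k_i^2-\xi^2}$ lie outside $E_\delta$, so $\mu_1,\mu_2$ and hence $f_{x_2,y_2}^{i,j}$, $g_{x_2,y_2}^{i,j}$, $\mu_i^m$, $\xi^l$, $e^{\bi\xi a_n^{x_1,y_1}}$ are holomorphic in a neighbourhood of $E_\delta$, while $A\ne0$ there by Lemma~\ref{lemA10}. (ii) The common tails are absolutely convergent for $n\ne0$: this holds because of the decay of $e^{\bi\xi a_n^{x_1,y_1}}$ together with the fact that the only singularity of the Lemma~\ref{lemA11} bound on the real tail, namely $|k_i^2-\xi_1^2|^{-1/2}$ at $\xi_1=k_i$, is integrable. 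Granting (i)--(ii), $\int_{\mathrm{EXT}}-\int_{P_\delta}$ collapses to $\oint_{\partial E_\delta}$, which vanishes by Cauchy's theorem. This deformation is the only delicate step; everything after it is routine one-dimensional estimation.

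It then remains to bound $\int_{P_\delta}$ on its four legs via Lemma~\ref{lemA11}. On the upper imaginary leg $\xi=\bi\xi_2$, $\xi_2\ge\delta k_1$ (on $\partial\mCpp$), the prefactor $\xi_2^l(k_2^2+\xi_2^2)^{m/2}/|\mu_i|$ is polynomial times a bounded factor (as $|\mu_i|=\sqrt{k_i^2+\xi_2^2}\gtrsim1$) and, since $l,m\le2$, is dominated by $e^{-2|n|M_1\xi_2}$, whence $\int_{\delta k_1}^\infty(\cdots)e^{-2|n|M_1\xi_2}\,d\xi_2\lesssim e^{-2|n|M_1\delta k_1}$, the residual incomplete-$\Gamma$ factor being bounded uniformly for $|n|\ge1$. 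On the two legs inside $E_\delta$, $\xi=\xi_1+\bi\delta k_1$ and $\xi=\delta k_1+\bi\xi_2$ with $\xi_1,\xi_2\in[0,\delta k_1]$, the Lemma~\ref{lemA11} prefactor is bounded on $E_\delta$, and integrating $e^{-2|n|M_1\xi_2-2|n|\bar\sigma_1\xi_1}$ over each leg gives $\lesssim e^{-2|n|M_1\delta k_1}$ and $\lesssim e^{-2|n|\bar\sigma_1\delta k_1}$ respectively. On the real leg $\xi=\xi_1\ge\delta k_1$ (on $\partial\mCpp$), the prefactor $\xi_1^l(k_2^2+\xi_1^2)^{m/2}/|\mu_i|$ has only the integrable singularity $|k_i^2-\xi_1^2|^{-1/2}$, so writing $e^{-2|n|\bar\sigma_1\xi_1}=e^{-2|n|\bar\sigma_1\delta k_1}e^{-2|n|\bar\sigma_1(\xi_1-\delta k_1)}$ and using $|n|\ge1$ in the second factor yields $\lesssim e^{-2|n|\bar\sigma_1\delta k_1}$. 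Adding the four contributions gives $\big|\int_{P_\delta}(\cdots)\big|\lesssim e^{-2|n|M_1\delta k_1}+e^{-2|n|\bar\sigma_1\delta k_1}$, as claimed.
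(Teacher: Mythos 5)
Your proposal is correct and follows essentially the same route as the paper: deform $\mathrm{EXT}$ to the same contour $P_\delta$ via Cauchy's theorem on $E_\delta$ (justified by Lemma~\ref{lemA10}), then estimate the four legs using the two bounds of Lemma~\ref{lemA11}. You supply some extra justification for the contour deformation (holomorphy on $E_\delta$, convergence of the common tails) that the paper takes for granted, but the decomposition and the leg-by-leg estimates are the same.
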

The following lemma studies properties of $a_{n}^{x_1,y_1}$ and $b_j^{x_2,y_2}$.
\begin{mylemma}\label{lemab}
	Suppose $x,y\in B_{\rm ex}^i$ for $i=1,2$. For any $n\in\mathbb{Z}\backslash\{0\}$ and $j=1,2,3$:
		\begin{align}
		\label{eq:im:anbj:1}
		{\rm Im}\left( \sqrt{( a_n^{x_1,y_1} )^2 + ( b_j^{x_2,y_2} )^2} \right)\geq \frac{(2|n|-2)^2\bar{\sigma}_1}{\sqrt{(2|n|+2)^2+4(M_2/M_1)^2}}. 
		\end{align}
	\begin{proof}
	At first, it is easy to see that
		\begin{align*}
		&{\rm Im}(a_{n}^{x_1,y_1}) \in [( 2|n|-2 )\bar{\sigma}_1,( 2|n|+2 )\bar{\sigma}_1],\quad
		{\rm Re}(a_{n}^{x_1,y_1}) \in [( 2|n|-2 )M_1,( 2|n|+2 )M_1],\\
		&{\rm Re}(b_j^{x_2,y_2}) \in [0, 2M_2],\quad {\rm Im}(b_j^{x_2,y_2}) \in [0, 2\bar{\sigma}_2].
		\end{align*}
		This and {\cite[Lemma 6.1]{chenzheng}} immediately give (\ref{eq:im:anbj:1}).
	\end{proof}
\end{mylemma}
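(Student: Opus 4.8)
The plan is to reduce the estimate (\ref{eq:im:anbj:1}) to a purely algebraic lower bound for ${\rm Im}\big(\sqrt{z_1^2+z_2^2}\big)$ in the situation where $z_1=a_n^{x_1,y_1}$ and $z_2=b_j^{x_2,y_2}$ both lie in closed first--quadrant rectangles whose corners are controlled by $M_1,M_2,\bar{\sigma}_1,\bar{\sigma}_2$. The first step is thus to locate the real and imaginary parts of $a_n^{x_1,y_1}$ and $b_j^{x_2,y_2}$. Since $x,y\in B_{\rm ex}^i$, the PML--transformed coordinates satisfy ${\rm Re}(\tx_1)=x_1\in[-M_1,M_1]$ and, by \eqref{condforabs}, $|{\rm Im}(\tx_1)|=\big|\int_0^{x_1}\sigma_1(t)\,dt\big|\le\bar{\sigma}_1$, and likewise for $\ty_1$ and for $\tx_2,\ty_2$ (with $M_1,\bar{\sigma}_1$ replaced by $M_2,\bar{\sigma}_2$). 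Substituting this into $a_{2m}^{x_1,y_1}=(4m\tM_1+\tx_1-\ty_1)^+$ and $a_{2m+1}^{x_1,y_1}=((4m+2)\tM_1-\tx_1-\ty_1)^+$, where $\tM_1=M_1+\bi\bar{\sigma}_1$, and carrying out the short case split on ${\rm sgn}(m)$ --- which dictates how the convention $a^+=\sqrt{a^2}$ (nonnegative real part) acts --- one obtains for every $n\in\mathbb{Z}\backslash\{0\}$
\[
{\rm Re}(a_n^{x_1,y_1})\in[(2|n|-2)M_1,(2|n|+2)M_1],\qquad {\rm Im}(a_n^{x_1,y_1})\in[(2|n|-2)\bar{\sigma}_1,(2|n|+2)\bar{\sigma}_1].
\]
Using $x_2y_2\ge 0$ (which holds because $x,y\in B_{\rm ex}^i$ for the \emph{same} $i$) together with the monotonicity of $t\mapsto\int_0^{t}\sigma_2(s)\,ds$, one checks similarly that ${\rm Re}(b_j^{x_2,y_2})\in[0,2M_2]$ and ${\rm Im}(b_j^{x_2,y_2})\in[0,2\bar{\sigma}_2]$ for $j=1,2,3$; for $j=3$ this follows at once from $b_3=2\tM_2-b_2$.

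The second step is the purely algebraic estimate, which is exactly \cite[Lemma 6.1]{chenzheng} and which may also be seen directly as follows. Write $w=z_1^2+z_2^2$ and set $P={\rm Re}(z_1),Q={\rm Im}(z_1),p={\rm Re}(z_2),q={\rm Im}(z_2)$, all of which are nonnegative by the first step. Then ${\rm Im}(w)=2(PQ+pq)\ge 0$, so $w$ lies in the closed upper half plane and the branch of $\sqrt{\cdot}$ with nonnegative imaginary part obeys ${\rm Im}(\sqrt w)=|{\rm Im}(w)|/\sqrt{2(|w|+{\rm Re}(w))}$. Since $|w|\le|z_1|^2+|z_2|^2=P^2+Q^2+p^2+q^2$ while ${\rm Re}(w)=P^2-Q^2+p^2-q^2$, the denominator satisfies $|w|+{\rm Re}(w)\le 2(P^2+p^2)$, and discarding the nonnegative term $pq$ in the numerator yields
\[
{\rm Im}\big(\sqrt{z_1^2+z_2^2}\big)\ge\frac{2PQ}{\sqrt{4(P^2+p^2)}}=\frac{PQ}{\sqrt{P^2+p^2}}.
\]
The last expression is nondecreasing in $P$ and in $Q$ and nonincreasing in $p$, so over the rectangles from the first step it is minimized at $P=(2|n|-2)M_1$, $Q=(2|n|-2)\bar{\sigma}_1$, $p=2M_2$; inserting these values, and then replacing $(2|n|-2)^2$ by $(2|n|+2)^2$ inside the square root to obtain a clean uniform statement, gives precisely (\ref{eq:im:anbj:1}).

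There is no deep obstacle here; the only point demanding attention is the sign bookkeeping in the first step. With $a^+=\sqrt{a^2}$ chosen to have nonnegative real part, the argument $4m\tM_1+\tx_1-\ty_1$ is sign--flipped when $m<0$ (and analogously $(4m+2)\tM_1-\tx_1-\ty_1$ for the odd index), so one must verify that the resulting intervals for ${\rm Re}(a_n^{x_1,y_1})$ and ${\rm Im}(a_n^{x_1,y_1})$ come out identical for $n>0$ and for $n<0$. Likewise, it is the hypothesis $x,y\in B_{\rm ex}^i$ and the sign condition $\sigma_2\ge 0$ from \eqref{condforabs} that force $b_1,b_2,b_3$ genuinely into the closed first quadrant --- the identity for ${\rm Im}(\sqrt w)$ and the entire estimate rely on all four of $P,Q,p,q$ being nonnegative, so this is where the hypotheses of the lemma are actually used. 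Once this is settled, the conclusion is immediate.
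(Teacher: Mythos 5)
Your proof is correct and reaches the paper's conclusion by the same two-step route: first locate $a_n^{x_1,y_1}$ and $b_j^{x_2,y_2}$ in the stated first-quadrant rectangles, then apply a lower bound for ${\rm Im}\bigl(\sqrt{z_1^2+z_2^2}\bigr)$ over those rectangles. The one substantive difference is that the paper dispatches the second step by citing \cite[Lemma 6.1]{chenzheng} as a black box, whereas you prove the algebraic estimate directly from the identity ${\rm Im}(\sqrt{w})=|{\rm Im}(w)|/\sqrt{2(|w|+{\rm Re}(w))}$ together with a monotonicity argument over the rectangles. This makes the lemma self-contained, and in fact it yields the marginally sharper intermediate bound with $P_{\min}^2$ rather than $P_{\max}^2$ under the square root, which you then deliberately relax to match the paper's stated form (\ref{eq:im:anbj:1}). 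Both approaches are valid; your direct argument trades a few extra lines for removing the external dependence on \cite{chenzheng}, and you have handled correctly the only delicate points, namely the sign bookkeeping hidden in $a^+=\sqrt{a^2}$ and the use of $x,y\in B_{\rm ex}^i$ (same $i$) and $\sigma_2\ge 0$ to keep each $b_j$ in the closed first quadrant.
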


We now recall an important lemma from {\color{red}\cite{cheliu05}}.
\begin{mylemma}\label{lemhankel}
	For any $\nu\in\mathbb{R}$, $z\in \mathbb{C}^{++}$, and $\Theta\in\mathbb{R}$
	such that $0<\Theta\leq |z|$, we have 
	\begin{equation}
	\label{eq:est:hankel}
	|H_\nu^{(1)}(z)|\leq e^{-{\rm Im}(z)\left( 1-\frac{\Theta^2}{|z|^2} \right)^{1/2}}|H_{\nu}^{(1)}(\Theta)|.
	\end{equation}
\end{mylemma}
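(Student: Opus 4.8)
The plan is to obtain the bound from a Schläfli/Sommerfeld integral representation of the Hankel function by a single contour deformation that exposes the exponential factor; in particular the Hankel function is never divided by, so no zero‑freeness input is needed. First I would make three reductions. (i) The representation used below is even in $\nu$ (send $t\mapsto-t$), so it suffices to treat $\nu\ge0$. (ii) Both sides are continuous in $z$ on $\ol\mCpp\setminus\{0\}$ and in $\Theta\in(0,\infty)$, so it suffices to prove the inequality for $z\in\mCpp$ with $|z|>\Theta$ strictly; the limiting cases — in particular $|z|=\Theta$, which is the ``rotation into the upper half plane'' estimate $|H_\nu^{(1)}(z)|\le|H_\nu^{(1)}(|z|)|$ — then follow by continuity. (iii) Write $z=|z|e^{\bi\beta}$ with $\beta\in(0,\pi/2)$ and set $\lambda:={\rm Im}(z)\,(1-\Theta^2/|z|^2)^{1/2}=\sin\beta\,\sqrt{|z|^2-\Theta^2}\ge0$, the quantity appearing in the exponent.

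The starting point is $H_\nu^{(1)}(w)=\frac{e^{-\bi\nu\pi/2}}{\bi\pi}\int_{-\infty}^{\infty}e^{\bi w\cosh t}\cosh(\nu t)\,dt$, which is absolutely convergent for ${\rm Im}(w)>0$ and extends, as a conditionally convergent Sommerfeld‑deformed integral, to real $w=\Theta>0$. The key step is to deform, for $w=z$, the $t$–contour from $\mathbb R$ onto the curve $\Gamma=\{t(s):s\in\mathbb R\}$ defined by $z\cosh t(s)=\Theta\cosh s+\bi\lambda$; since the integrand is entire in $t$, it suffices to verify that the arcs joining $\mathbb R$ to $\Gamma$ at ${\rm Re}(t)\to\pm\infty$ contribute nothing, which holds because $\beta\in(0,\pi/2)$ forces ${\rm Re}(\bi z\cosh t)\le0$ on those arcs with enough decay to absorb $\cosh(\nu t)$ (this is where $\nu\ge0$ — in fact $\nu<1$ — is convenient; for $\nu\ge1$ one would instead start from the full Hankel loop integral, valid for every $\nu$, and perform an analogous deformation). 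On $\Gamma$ one has ${\rm Im}(z\cosh t(s))\equiv\lambda$, so $e^{\bi z\cosh t(s)}=e^{-\lambda}e^{\bi\Theta\cosh s}$, and the factor $e^{-\lambda}=e^{-{\rm Im}(z)(1-\Theta^2/|z|^2)^{1/2}}$ pulls out: $H_\nu^{(1)}(z)=\frac{e^{-\bi\nu\pi/2}}{\bi\pi}\,e^{-\lambda}\int_{-\infty}^{\infty}e^{\bi\Theta\cosh s}\,\cosh(\nu t(s))\,t'(s)\,ds$, with $t'(s)=\Theta\sinh s/(z\sinh t(s))$.

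It then remains to show that the residual integral has modulus at most $\pi\,|H_\nu^{(1)}(\Theta)|$ (the prefactor has modulus $1/\pi$). The point is that the same change of variables $z\cosh t=\Theta\cosh s+\bi\lambda$ carries $\Gamma$ onto the Sommerfeld contour representing $H_\nu^{(1)}(\Theta)$, and that the specific value $\lambda=\sin\beta\sqrt{|z|^2-\Theta^2}$ is chosen so that this identification goes through with the amplitude $\cosh(\nu t(s))\,t'(s)$ dominated, in modulus, by the amplitude $\cosh(\nu s)$ of the $H_\nu^{(1)}(\Theta)$–integrand; together with the fact that ${\rm Im}(\Theta\cosh s)\equiv0$ on the real $s$–axis, this is what one needs. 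I expect this comparison of the two integrals to be the main obstacle: for real $\Theta$ the size of $|H_\nu^{(1)}(\Theta)|$ is produced by oscillatory cancellation rather than by the size of the integrand (its $\cosh$–representation is only conditionally convergent), so one cannot estimate by absolute values under the integral sign; instead one would integrate by parts to turn both integrals into absolutely convergent ones — exploiting that $e^{\bi\Theta\cosh s}$ has its only stationary point at $s=0$ — and then compare the resulting amplitudes, which reduces to showing that the ratio $\cosh(\nu t(s))\,t'(s)/\cosh(\nu s)$ has modulus $\le1$ and bounded variation, uniformly in $s$. Secondary and more routine points are the branch choices for $\cosh^{-1}$ and for $\sinh t(s)$, the endpoint matching between $\Gamma$ and the $H_\nu^{(1)}(\Theta)$–contour, the uniformity of the arc estimates, and the case $\nu\ge1$.
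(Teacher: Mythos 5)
The paper does not prove this lemma; it simply cites it from the reference \texttt{[cheliu05]} (Chen--Liu, SIAM J.~Numer.~Anal.~2005). So there is no ``paper's own proof'' to compare against, and your proposal has to be judged on its own.

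Your outline (Schl\"afli integral, contour deformation onto the level curve ${\rm Im}(z\cosh t)=\lambda$, then comparison with the $\Theta$--integral) is structurally plausible through the deformation step, and you correctly diagnose where the difficulty lies. But the final step, which you label the ``main obstacle,'' is not merely left to the reader: the reduction you propose is not a valid one, so the argument as stated does not close. You assert that the comparison of the two oscillatory integrals ``reduces to showing that the ratio $r(s)=\cosh(\nu t(s))\,t'(s)/\cosh(\nu s)$ has modulus $\le1$ and bounded variation.'' This reduction fails on two counts. First, a pointwise amplitude bound $|r|\le1$ says nothing about the ratio of the moduli of two oscillatory integrals that both undergo cancellation; the integral with the smaller amplitude can easily have the larger modulus. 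Second, the Abel/summation-by-parts route that you presumably have in mind gives
\[
\Bigl|\int A\,r\,e^{\bi\phi}\Bigr|\;\le\;|r(\infty)|\,|F(\infty)|\;+\;\sup_s|F(s)|\cdot\mathrm{TV}(r),
\qquad F(s)=\int_{-\infty}^{s}A\,e^{\bi\phi},
\]
and you need the left side to be $\le|F(\infty)|$, which is (a constant times) $|H_\nu^{(1)}(\Theta)|$. That would require $\mathrm{TV}(r)\cdot\sup_s|F(s)|\le\bigl(1-|r(\infty)|\bigr)|F(\infty)|$. But $\sup_s|F(s)|$ for an oscillatory primitive is generically much larger than $|F(\infty)|$, and moreover your own formulas force $r(0)=t'(0)=0$ (since $z\sinh t(0)\ne0$ while $\Theta\sinh 0=0$), while $r(\infty)=(\Theta/|z|)^\nu e^{-\bi\nu\beta}$; for $\nu=0$ this gives $|r(\infty)|=1$, so $1-|r(\infty)|=0$, and the inequality would require $\mathrm{TV}(r)=0$, contradicted by $r(0)=0\ne r(\infty)$. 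So even the weakest version of your reduction is inconsistent at $\nu=0$.

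There is also a structural warning sign that your comparison must be more delicate than ``compare amplitudes'': the amplitude in the deformed $z$--integral vanishes at the stationary point $s=0$ of the phase $\Theta\cosh s$, while the amplitude $\cosh(\nu s)$ in the $\Theta$--integral does not, so the two integrals are not of the same stationary-phase type and cannot be compared by pointwise domination of their integrands (absolutely convergent after any number of integrations by parts or not). Finally, the restriction $\nu<1$ that your arc estimate needs, and the promised ``analogous deformation'' of the Hankel loop integral for $\nu\ge1$, are not carried out.

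For what it is worth, the known proof in Chen--Liu proceeds quite differently: it starts from an absolutely convergent Hankel-type representation (of the form $H_\nu^{(1)}(z)\propto z^{-1/2}e^{\bi z}\int_0^\infty e^{-t}t^{\nu-1/2}(1-t/(2\bi z))^{\nu-1/2}dt$, $\nu>-1/2$, together with $H_{-\nu}^{(1)}=e^{\bi\nu\pi}H_\nu^{(1)}$), so that one never has to compare two conditionally convergent oscillatory integrals at all. Your Sommerfeld-integral route may ultimately be salvageable, but as written it has a genuine gap at exactly the step you flagged, and the proposed reduction to a pointwise ratio bound plus bounded variation is not correct.
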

One application of Lemma \ref{lemhankel} is given by the following.
\begin{mycorollary}\label{coro1}
	Suppose $x,y\in B_{\rm ex}^i$. There exist a positive integer $N$, such that for all $n\geq N$,
	\[
	H_\nu^{(1)}\left(k_i\sqrt{(a_n^{x_1,y_1})^2 + (b_j^{x_2,y_2})^2}\right)\leq
	e^{-n\bar{\sigma}_1}|H_{\nu}^{(1)}(\bar{\sigma}_1)|,
	\]
	for $j=1,2,3$ and for any $\nu\in\mathbb{R}$.
	\begin{proof}
		{Eq.~(\ref{eq:im:anbj:1})}  in Lemma \ref{lemab} indicates that 
		\[
		\liminf_{n\to\infty}\frac{{\rm Im}(\sqrt{(a_n^{x_1,y_1})^2 +
				(b_j^{x_2,y_2})^2})}{2|n|\bar{\sigma}_1}\geq 1,
		\]
		so that for sufficiently large $n$, we have
		\[
		\left|\sqrt{(a_n^{x_1,y_1})^2 + (b_j^{x_2,y_2})^2}\right|\geq {\rm
			Im}(\sqrt{(a_n^{x_1,y_1})^2 + (b_j^{x_2,y_2})^2})\geq
		|n|\bar{\sigma}_1\geq \bar{\sigma}_1,
		\]
		The estimate immediately follows from Lemma \ref{lemhankel}.
	\end{proof}
\end{mycorollary}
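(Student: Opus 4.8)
The plan is to invoke the Hankel-function decay bound of Lemma~\ref{lemhankel} with the choice $z=k_i w$, where $w:=\sqrt{(a_n^{x_1,y_1})^2+(b_j^{x_2,y_2})^2}$, and $\Theta=\bar\sigma_1$, using the lower bound (\ref{eq:im:anbj:1}) of Lemma~\ref{lemab} both to confirm that Lemma~\ref{lemhankel} applies and to force its exponent to be at least $n\bar\sigma_1$. Since $k_i,\bar\sigma_1,M_1,M_2$ are fixed throughout this section, $N$ may depend on all of them.

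First I would verify the two hypotheses of Lemma~\ref{lemhankel}, namely $z\in\mathbb{C}^{++}$ and $0<\Theta\le|z|$. By the sign information on the real and imaginary parts of $a_n^{x_1,y_1}$ and $b_j^{x_2,y_2}$ recorded at the start of the proof of Lemma~\ref{lemab} --- for $|n|\ge 2$ one has ${\rm Re}(a_n^{x_1,y_1})>0$ and ${\rm Im}(a_n^{x_1,y_1})>0$, while ${\rm Re}(b_j^{x_2,y_2})\ge 0$ and ${\rm Im}(b_j^{x_2,y_2})\ge 0$ --- the quantity $w^2=(a_n^{x_1,y_1})^2+(b_j^{x_2,y_2})^2$ has strictly positive imaginary part, so $w$, taken in the branch with nonnegative imaginary part fixed in Section~2, lies in $\mathbb{C}^{++}$, and $z=k_iw$ with $k_i>0$ lies there too. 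For the size condition, (\ref{eq:im:anbj:1}) reads ${\rm Im}(w)\ge (2|n|-2)^2\bar\sigma_1/\sqrt{(2|n|+2)^2+4(M_2/M_1)^2}$, and since the right-hand side over $2|n|\bar\sigma_1$ tends to $1$, we obtain $\liminf_{n\to\infty}{\rm Im}(w)/(2|n|\bar\sigma_1)\ge 1$; in particular $|w|\ge{\rm Im}(w)\ge|n|\bar\sigma_1$ for $n$ large, whence $|z|=k_i|w|\ge\bar\sigma_1=\Theta$ for all $n\ge N$ with $N$ taken large enough.

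Finally I would substitute these bounds into (\ref{eq:est:hankel}): since ${\rm Im}(z)=k_i\,{\rm Im}(w)$ and $(1-\bar\sigma_1^2/|z|^2)^{1/2}\to 1$ as $n\to\infty$, the exponent $-{\rm Im}(z)\big(1-\bar\sigma_1^2/|z|^2\big)^{1/2}$ does not exceed $-n\bar\sigma_1$ for all $n\ge N$ after one further enlargement of $N$, which is exactly the asserted estimate. Uniformity in $\nu\in\mathbb{R}$ is automatic, since $\nu$ appears only in the factor $|H_\nu^{(1)}(\bar\sigma_1)|$ on the right-hand side, and uniformity in $j\in\{1,2,3\}$ follows because (\ref{eq:im:anbj:1}) was established for each $j$. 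The one thing to watch is the bookkeeping of multiplicative constants in passing from (\ref{eq:im:anbj:1}) to the exponent --- chiefly the factor $k_i$ and the factor $(1-\bar\sigma_1^2/|z|^2)^{1/2}$ --- so as to land on the clean rate $n\bar\sigma_1$; this is where the comfortable asymptotics $(2|n|-2)^2/\sqrt{(2|n|+2)^2+\cdots}\sim 2|n|$ comes in, and the remaining steps are routine.
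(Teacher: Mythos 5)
Your argument reproduces the paper's proof in its essentials: from (\ref{eq:im:anbj:1}) you extract $\liminf_{n\to\infty}{\rm Im}(w)/(2|n|\bar\sigma_1)\ge1$ with $w=\sqrt{(a_n^{x_1,y_1})^2+(b_j^{x_2,y_2})^2}$, and then feed $z=k_iw$, $\Theta=\bar\sigma_1$ into Lemma~\ref{lemhankel}. Your extra step verifying $z\in\mathbb{C}^{++}$ is a genuine addition the paper skips; it is correct, since ${\rm Im}(w^2)>0$ (from the sign information on $a_n$ and $b_j$ for $|n|\ge2$) forces the principal root $w$ into $\mathbb{C}^{++}$, and $k_i>0$ preserves the quadrant. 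That said, both you and the paper wave at the multiplicative factor of $k_i$ in the final step: with ${\rm Im}(z)=k_i\,{\rm Im}(w)\sim 2k_in\bar\sigma_1$ and $(1-\bar\sigma_1^2/|z|^2)^{1/2}\to1$, the exponent produced by Lemma~\ref{lemhankel} is asymptotically $-2k_in\bar\sigma_1$, which dominates $-n\bar\sigma_1$ only when $k_i>1/2$; for small $k_i$ no "further enlargement of $N$" repairs this, since the deficit is multiplicative rather than lower-order. You at least flag the $k_i$ bookkeeping as the thing to watch (the paper does not even mention it), but your closing sentence overstates the ease of landing on the clean rate $n\bar\sigma_1$. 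The imprecision is shared with the paper and is harmless for its use in Theorem~\ref{THMGPML}, where any geometric rate in $n$ would do.
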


Combining all the results above, we now show $G_{\rm PML}$ is well-defined.

\begin{mytheorem}\label{THMGPML}
	It holds that:
	\begin{itemize}
		\item[(1).] The series in $G_{\rm PML}$ defined in (\ref{eq:infseries:G:1}) and
		(\ref{eq:infseries:G:2}) is absolutely convergent for any $x\in B_{\rm ex}^{i},y\in
		B_{\rm ex}^{j}$ with $x\neq y$ for $i,j=1,2$.
		\item[(2).] Suppose $y\in B_{\rm ex}^i$ for $i=1,2$. Then $G_{\rm PML}(x;y)\in
		H^2(B_{\rm ex}\backslash\overline{B(y,\varepsilon)})$ for any $\varepsilon>0$, where $B(y,\varepsilon)$ denotes a disk centered at $y$ with radius $\varepsilon$. \cb{Moreover, 
		$G_{\rm PML}(x;y)-\Phi(k_i,\tx;\ty)\in W^{2,\infty}(B_{\rm ex}^i)$}.
		\item[(3).] $G_{\rm PML}(x;y)$ solves the {truncated PML problem \eqref{eq:upml:green}.}
	\end{itemize}
	\begin{proof}
			(1). Lemma \ref{lemA12} with $l=m=0$ and Corollary \ref{coro1} with $\nu =
      0$ directly imply the
			series in (\ref{eq:infseries:G:1}) and (\ref{eq:infseries:G:2}) are
			absolutely convergent, which explains the validity of rearranging terms in
			(\ref{eq:inf:series:G}) to arrive at (\ref{eq:infseries:G:1}) and (\ref{eq:infseries:G:2}).
			
			(2). We can make use of the facts that $\tilde{x}_j\in W^{2,\infty}(B_{\rm
        ex})$, Lemma \ref{lemA12} with $0\leq l,m\leq 2$ and Corollary
      \ref{coro1} with $\nu=1,2$ to see the results.
			
			(3). The reason that $G_{\rm PML}$ {satisfies \eqref{eq:upml:green}}
			based on the fact that $\tilde{G}$ satisfies (\ref{eq:green:ypml:ext}) and
			the differentiation of $G_{\rm PML}$ can be exchanged with the summation in
			(\ref{eq:inf:series:G}). The interface condition is satisfied by construction.
			
			We now verify the zero boundary condition {in \eqref{eq:upml:green}}. On $x_2=\pm
			M_2$, $G_{\rm PML}=0$ since $\tilde{G}(x,\pm M_2)=0$. On $x_1=M_1$, we
			get 
			\begin{align*}
			G_{\rm PML}(x;y) &= \sum_{n=-\infty}^{\infty}\tilde{G}(x+ne_1,y) - \sum_{n=-\infty}^{\infty}\tilde{G}(x'+ne_1,y)\\
			&=\sum_{n=-\infty}^{\infty}\tilde{G}(((4n+1)M_1,x_2);y) - \sum_{n=-\infty}^{\infty}\tilde{G}(((4n+1)M_1,x_2);y)=0;
			\end{align*}
			one similarly verifies that $G_{\rm PML}(x;y)=0$ on $x_1=-M_1$.
	\end{proof}
	
\end{mytheorem}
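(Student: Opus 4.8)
The plan is to derive all three assertions from the series representations (\ref{eq:infseries:G:1})--(\ref{eq:infseries:G:2}) combined with the exponential decay estimates of Lemma~\ref{lemA12} and Corollary~\ref{coro1}. For (1), I would split the generic $n$-th term ($n\neq0$) of (\ref{eq:infseries:G:1}) into its EXT-integral piece $\int_{\rm EXT}e^{\bi\xi a_n^{x_1,y_1}}\big(f_{x_2,y_2}^{i,i}(\xi)/A+g_{x_2,y_2}^{i,i}(\xi)\big)\,d\xi$ and the three Hankel terms $H_0^{(1)}\!\big(k_i\sqrt{(a_n^{x_1,y_1})^2+(b_j^{x_2,y_2})^2}\big)$, $j=1,2,3$. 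Lemma~\ref{lemA12} with $l=m=0$ bounds the former by $C\big(e^{-2|n|M_1\delta k_1}+e^{-2|n|\bar{\sigma}_1\delta k_1}\big)$, and Corollary~\ref{coro1} with $\nu=0$ bounds each of the latter by $e^{-n\bar{\sigma}_1}|H_0^{(1)}(\bar{\sigma}_1)|$ for $n\ge N$, both geometric in $|n|$ and hence summable. The finitely many low-index terms are finite because, for $x,y\in B_{\rm ex}$ and $n\neq0$, the image points $x+ne_1$ and $x'+ne_1$ have first coordinate of modulus at least $M_1$ and so never coincide with $y\in B_{\rm ex}$; the only summand carrying a genuine singularity inside $B_{\rm ex}$ is the $n=0$ direct term $\tilde{G}(x,y)$, which is harmless since $x\neq y$. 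The same reasoning applies to (\ref{eq:infseries:G:2}). Since all these bounds are uniform for $x$ in compact subsets of $B_{\rm ex}\setminus\{y\}$, the convergence there is uniform, which retroactively justifies rearranging (\ref{eq:inf:series:G}) into (\ref{eq:infseries:G:1})--(\ref{eq:infseries:G:2}).

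For (2), I would differentiate (\ref{eq:infseries:G:1})--(\ref{eq:infseries:G:2}) term by term in $x$: each derivative passes through $\tilde{x}_1,\tilde{x}_2\in W^{2,\infty}(B_{\rm ex})$ and inserts algebraic factors $\xi^l\mu_i^m$ with $l+m\le2$ into the EXT integrals while turning the Hankel terms into $H_1^{(1)},H_2^{(1)}$. Lemma~\ref{lemA12} with $0\le l,m\le2$ and Corollary~\ref{coro1} with $\nu=1,2$ again give geometric-in-$|n|$ bounds, so the once- and twice-differentiated series converge uniformly on compact subsets away from $y$; hence $G_{\rm PML}(\cdot;y)\in H^2\big(B_{\rm ex}\setminus\overline{B(y,\varepsilon)}\big)$. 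Subtracting the leading singularity $\Phi(k_i,\tilde{x};\tilde{y})$ carried by the $n=0$ direct term leaves a series whose remaining Hankel arguments stay bounded away from $0$ on $B_{\rm ex}^i$ (for $y$ fixed) and whose EXT integrals are bounded with bounded derivatives there, giving $G_{\rm PML}(x;y)-\Phi(k_i,\tilde{x};\tilde{y})\in W^{2,\infty}(B_{\rm ex}^i)$.

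For (3), the uniform convergence of the differentiated series from (2) lets me exchange the PML operator with the summation in (\ref{eq:inf:series:G}). Because $\sigma_1^p$ is $2M_1$-periodic and even, the coefficients $\alpha_1^p,\alpha_2,k$ are invariant both under the translation $x\mapsto x+ne_1$ and under the reflection $x\mapsto x'=(2M_1-x_1,x_2)$, so each $\tilde{G}(x+ne_1,y)$ and each $\tilde{G}(x'+ne_1,y)$, viewed as a function of $x$, solves \eqref{eq:green:ypml:ext} with a point source that, for $n\neq0$, is supported outside $\overline{B_{\rm ex}}$; the only source inside $B_{\rm ex}$ is the one at $y$ from the $n=0$ direct term $\tilde{G}(x,y)$. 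Hence $G_{\rm PML}$ satisfies the equation in \eqref{eq:upml:green} with right-hand side exactly $-\delta(x-y)$. The interface conditions on $x_2=0$ hold termwise by \eqref{eq:green:ypml:ext}; on $x_2=\pm M_2$ every summand vanishes; and on $x_1=\pm M_1$ the arguments $x+ne_1$ and $x'+ne_1$ (after reindexing $n\mapsto n+1$ in the reflected sum at $x_1=-M_1$) share the same first coordinate, so the two sums cancel termwise and $G_{\rm PML}=0$ on $\partial B_{\rm ex}$. The step I expect to be the main obstacle is precisely this bookkeeping: verifying cleanly that every image source except the one at $y$ escapes $\overline{B_{\rm ex}}$ (or at worst lies on the null set $\partial B_{\rm ex}$) and that $x\mapsto x'$ genuinely conjugates the full PML operator to itself, which rests on the evenness of $\sigma_1^p$ about $x_1=M_1$; the convergence and regularity in (1)--(2) are by comparison routine applications of Lemma~\ref{lemA12} and Corollary~\ref{coro1}.
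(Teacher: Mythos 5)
Your proposal is correct and follows the same approach as the paper's proof: (1) and (2) invoke Lemma~\ref{lemA12} with $0\le l,m\le 2$ and Corollary~\ref{coro1} with $\nu=0,1,2$ exactly as the paper does, and (3) rests on termwise application of the PML operator (using the $2M_1$-periodicity and evenness of $\sigma_1^p$) plus the reindexing cancellation at $x_1=\pm M_1$. The additional bookkeeping you supply — checking that every image source other than the $n=0$ direct one lies on or outside $\partial B_{\rm ex}$, and isolating the $\Phi(k_i,\tx;\ty)$ singularity for the $W^{2,\infty}$ statement — is sound and fills in details the paper leaves implicit.
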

\subsection{Wellposedness of the UPML problem}
We are ready to analyze the well-posedness of the layered scattering problem \eqref{eq:upml} with UPML.
\begin{proof}[Proof of Theorem \ref{wellposethm}]
  It is easy to see that $a(\tu,v)$ in (\ref{eq:varform:a}) satisfies the Garding inequality and thus is
  a Fredholm operator of index zero \cite[Thm 2.34]{McLean2000}. Therefore, to prove the existence, we only
  need to show the uniqueness. \cb{It suffices to show that the following problem has only zero solution: Find  $w\in H^{1}_0(B_{\rm ex})$ such that
  \begin{equation}
    \label{eq:vf:u:pf}
    a(w,v)=0,\forall v\in H_0^1(B_{\rm ex}).
  \end{equation}
  Since the coefficient ${\bf A}$ is Lipschitz, the regularity theory of elliptic equations implies that $w\in H^2(B_{\rm ex})\cap H_0^1(B_{\rm ex})$. Clearly, $w$ satisfies the following equation 
 \begin{align}\label{eq:w}
\nabla\cdot({\bf A}\nabla w) + \alpha_1\alpha_2 k^2 w= 0\quad\text{ in } B_{\rm ex}^j, \;j=1,2.
 \end{align}
 We claim that
  $w(y)=0$ for any $y\in B_{\rm ex}^j, j=1$ or $2$. }Let $\epsilon$ be so small that $B(y,\epsilon)\subset B_{\rm ex}^j$. Since $G_{\rm PML}(\cdot;y)$ solves \eqref{eq:green:pmly}, its restriction on  $B_{\rm ex}^{j,\epsilon}=B_{\rm ex}^j\setminus \overline{B(y,\epsilon)}$ belongs to $H^2(B_{\rm ex}^{j,\epsilon})$. Clearly,
\begin{align}\label{eq:GPML}
\nabla\cdot({\bf A}\nabla G_{\rm PML}(\cdot;y)) + \alpha_1\alpha_2 k^2 G_{\rm PML}(\cdot;y)= 0\quad\text{ in } B_{\rm ex}^{j,\epsilon}.
\end{align}
Let ${\bm \nu }^c={\bf A}^T{\bm \nu}$ and ${\bm \nu}$ denotes the outer unit normal vector to $\partial B(y,\epsilon)$.
It follows from the second Green's identity and \eqref{eq:w}--\eqref{eq:GPML} that
\begin{align}
  0 = \int_{\partial B(y,\epsilon)}\partial_{{\bm \nu}^c}wG_{\rm PML}(x;y)ds(x)-\int_{\partial B(y,\epsilon)}w\partial_{{\bm \nu}^c} G_{\rm PML}(\cdot;y)ds(x),
\end{align}
\cb{where $\partial_{{\bm \nu}^c}=\nabla\cdot {\bm \nu}^c$.} By Theorem
\ref{THMGPML} (2), for $y\in B_{\rm ex}^j$, as $\epsilon \to 0$,
    \[
      \int_{\partial B(y,\epsilon)}\partial_{{\bm \nu}^c}w[G_{\rm
        PML}(x;y)ds(x) - \Phi(k_j,\tilde{x};\tilde{y})] ds(x) \to 0.
    \]
    and 
    \[
      \int_{\partial B(y,\epsilon)}w[\partial_{{\bm \nu}^c} G_{\rm PML}(\cdot;y)-\partial_{{\bm \nu}^c} \Phi(k_j,\tilde{x};\tilde{y})] ds(x) \to 0.
    \]
    On the other hand, for sufficiently small $\epsilon>0$, $w$ solves the
    PML-transformed Helmholtz equation with wavenumber $k_j$ while
    $\Phi(k_j,\tx;\ty)$ is the associate PML-transformed free-space Green's
    function for medium $k_j$ \cite{lassas_somersalo_2001}, we see from the
    Green's representation formula \cite[Prop. 2]{Lu2018Perfectly} that
    \[
      w(y)=\int_{\partial B(y,\epsilon)}\left[ \partial_{{\bm \nu}^c}
        w\Phi(k_j,\tilde{x};\tilde{y}) - w\partial_{{\bm \nu}^c}
        \Phi(k_j,\tilde{x};\tilde{y})\right] ds(x).
    \]
    Consequently,
    \[
      w(y)=\lim_{\epsilon\to 0} \int_{\partial B(y,\epsilon)}\partial_{{\bm
          \nu}^c}wG_{\rm PML}(x;y)ds(x)-\int_{\partial
        B(y,\epsilon)}w\partial_{{\bm \nu}^c} G_{\rm PML}(\cdot;y)ds(x) = 0,
    \]
    for all $y\in B^j_{\rm ex}, j=1,2$. The continuity then implies that
    $w\equiv 0$.
\end{proof}

  \section{Convergence Analysis}

  Throughout this section, we will suppose the previous assumptions
  (\ref{eq:assump1}-\ref{eq:assump3}) hold and will show how rapidly the PML
  solution $\tu$ converges to the true solution $u$ in the physical domain
  $B_{\rm in}$ as $\bar{\sigma}_j$ or $d_j$ increase for $j=1,2$. For this
  purpose, we have to reconsider the properties of $A$ and $f_{x_2,y_2}^{i,j}$
  with $i,j=1,2$ by restricting to the special case where the target point $x\in
  B_{\rm in}$ and the source point $y\in D$. Unlike the previous estimates in
  section 3, much more delicate analysis must be made to estimate the residual
  terms in (\ref{eq:infseries:G:1}) and (\ref{eq:infseries:G:2}) since now
  $k_j$, $L_j, d_j$, and $\sigma_j$ for $j=1,2$ may vary; \cb{we emphasize that
  $\kappa=k_2/k_1>1$ is fixed in this section, and the generic constant $C$
  defined in notations $\lesssim$, $\gtrsim$, and $\eqsim$ are now indepedent of
  $k_j, L_j, d_j$ and $\sigma_j$ for $j=1,2$}.

\subsection{Properties of $A$ and $f_{x_2,y_2}^{i,j}$}

The following Lemma \ref{thmA41} is related to Lemma \ref{thmA5}. \cb{However, the two lemmas
  differ significantly from each other in several aspects}: in Lemma~\ref{thmA41}, (1) $x$ and $y$ are restricted in $B_{\rm in}$ and $D$, respectively; (2)
we look for a lower bound of $A$ that holds uniformly as the three parameters
$\xi$, $\bar{\sigma}_2$ and $d_2$ (or $M_2$) vary; (3) $\xi$ is defined in a
slightly smaller region; (4) we take $z=2\tilde{M}_2$ here but $|z|\lesssim
|\tilde{M}_2|$ in Lemma \ref{thmA5}.
\begin{mylemma}\label{thmA41}
	It holds
	\[
	\max\left\{\left|(\epsilon_1-1)\mu_2\right|,
	\left|(\epsilon_2-1)\mu_1\right|,\left|\frac{\mu_1\mu_2}{\mu_1+\mu_2}\right|\right\}\lesssim
	|A(\xi)|,
	\]
	for all $\xi\in\ol{\mathbb{C}^{+-}\cup\mathbb{C}^{-+}}\backslash
  \cup_{j=1}^2\{\xi\in\mathbb{C}\backslash\mathbb{R}:
  |\mu_j|<\varepsilon_0k_1\}$, $\bar{\sigma}_2\gtrsim k_1^{-1}$, and $M_2\gtrsim
  k_1^{-1}$, where the positive constant $\varepsilon_0\eqsim 1$.
	\begin{proof}
		By the definitions of $\mu_j$, $\epsilon_j$ for $j=1,2$, and $A$ in
		(\ref{eq:def:A}), we note that
		\begin{align*}
		\mu_j(\xi;k_j) &= k_1\mu_j(\frac{\xi}{k_1};\frac{k_j}{k_1}),\quad \epsilon_j(\xi;k_j,\bar{\sigma}_2,M_2) = \epsilon_j(\frac{\xi}{k_1};\frac{k_j}{k_1},k_1\bar{\sigma}_2,k_1M_2),\\
		A(\xi;k_1,k_2,\bar{\sigma}_2,M_2) &= k_1A(\frac{\xi}{k_1};1,\kappa, k_1\bar{\sigma}_2,k_1M_2),
		\end{align*}
		where we recall that $\kappa = k_2/k_1>1$ is a fixed constant and $A(\xi;k_1,k_2,\bar{\sigma}_2,M_2)$ is used to emphasize that $A$
		depends on the model parameters $k_1$, $k_2$, $\bar{\sigma}_2$ and $M_2$, etc.. Thus, we only
		consider the case when $k_1=1$ and $k_2=\kappa$ in the following and denote by $A(\xi;\bar{\sigma}_2,M_{2})=A(\xi;1,\kappa,\bar{\sigma}_2,M_2)$.

		Suppose there exist three sequences $\{\xi_n\}_{n=1}^{\infty}$,
    $\{\bar{\sigma}_{2,n}\}_{n=1}^{\infty}$, and $\{d_{2,n}\}_{n=1}^{\infty}$
    with $\xi_n\to\xi_0$, $\bar{\sigma}_{2,n}\to S\gtrsim 1$, and $M_{2,n}\to
    M\gtrsim 1$, such that
		\begin{equation}
		\label{eq:A:lim}
		\lim_{n\to\infty}\left|
		\frac{A(\xi_n;\bar{\sigma}_{2,n},M_{2,n})(\mu_1+\mu_2)}{\mu_1\mu_2} \right| = 0.
		\end{equation}
		Suppose $\max(S,M)=\infty$ as by similar arguments in Lemma \ref{thmA5},
    case $\max(S,M)<+\infty$ can be proved to be impossible. We now distinguish
    two cases:

			(I). $|\xi_0|<\infty$ but $\xi_0\notin\{\pm
			k_1,\pm k_2\}$. If $\xi_0\in(k_2,\infty)\cup (-\infty, -k_2)$, then
			${\rm Re}(\mu_1(\xi_0)) = {\rm Re}(\mu_2(\xi_0)) = 0$,
			so that $|\epsilon_{j,n}|=e^{-2M_{2,n}{\rm Im}(\mu_j(\xi_n))}=e^{-2 M\sqrt{\xi_0^2-k_j^2}}<1$. Then, the proof in Lemma \ref{lemA1} indicates that
			\begin{align*}
			\liminf_{n\to\infty}\left|\frac{A_n}{(1-\epsilon_{1,n})(1-\epsilon_{2,n})}\right|
			&\geq \liminf_{n\to\infty}\sum_{j=1}^2
			\frac{(1-|\epsilon_{j,n}|^2)}{1+|\epsilon_{j,n}|^2}{\rm Im}(\mu_{j,n}) \geq\sum_{j=1}^2
			\frac{(1-e^{-4M\sqrt{\xi_0^2-k_j^2}})}{1+e^{-4M\sqrt{\xi_0^2-k_j^2}}}\sqrt{\xi_0^2-k_j^2},
			\end{align*}
			which implies $A(\xi_0;S, M)>0$. The case when $\xi_0\in
      (-k_1,0)\cup(0,k_1)\cup(-\infty\bi,\infty\bi)$ can be analyzed similarly.
      Now suppose $\xi_0\in\mathbb{C}^{+-}\cup\mathbb{C}^{-+}$ so that we must
      have $\epsilon_{j,n}\to 0$ as $n\to\infty$ for both $j=1,2$. Then,
      $\lim_{n\to\infty}|A_n|=|\mu_1(\xi_0)+\mu_2(\xi_0)|\geq
      \sqrt{k_2^2-k_1^2}>0$.

			(II). $\xi_0\in\{\pm k_1,\pm k_2\}$ so that
			$\xi_{n}\in\mathbb{R}$ (for sufficiently large $n$). Suppose
			first $\xi_0=\pm k_1$ such that $\mu_1(\xi_n)\to 0$ as $n\to\infty$. Then,
			$\lim_{n\to\infty}|\epsilon_{2,n}| = e^{-\sqrt{k_2^2-k_1^2}S}<1$.
			We must have $\limsup_{n\to\infty}{\rm Re}(\epsilon_{1,n}) = 1$,
			since otherwise
			\[
			\liminf_{n\to\infty} |A_n| =
			\liminf_{n\to\infty}|(1+\epsilon_{2,n})(1-\epsilon_{1,n})\mu_2|\geq
			(1-e^{-\sqrt{k_2^2-k_1^2}S})(1-\limsup_{n\to\infty}{\rm
				Re}( \epsilon_{1,n} ))\sqrt{k_2^2-k_1^2}>0,
			\]
			which is impossible by (\ref{eq:A:lim}). Thus, there exists a subsequence
      $\{n_k\}_{k=1}^{\infty}$ with $n_k\to\infty$ as $k\to\infty$ such that
      $\lim_{k\to\infty}\epsilon_{1,n_k}=1$, indicating that
      $\lim_{k\to\infty}\Log(\epsilon_{1,n_k}) = 0$ where the logarithm is
      defined to be with its imaginary part in $(-\pi,\pi]$. Let $\mu_{1,n_k} =
      p_{1,n_k} + \bi q_{1,n_k}$, with $p_{1,n_k}q_{1,n_k}=0$ for all
      $k\in\mathbb{Z}^+$ since $\xi_{n_k}\in\mathbb{R}$. Thus, we have to
      consider two cases: Firstly, $p_{1,n_k}\geq 0$ but $q_{1,n_k}=0$ for all
      $k\in\mathbb{Z}^+$ (or there is a subsequence with such a property). Since
			\[
        \Log(\epsilon_{1,n_k}) = -2p_{1,n_k}\bar{\sigma}_{2,n_k}+ \Log(e^{2\bi
          p_{1,n_k}M_{2,n_k}}),
			\]
			we have
			\begin{align*}
			\lim_{k\to\infty}{\rm Re}\left( \frac{1-\epsilon_{1,n_k}}{\mu_{1,n_k}} \right) &= \lim_{k\to\infty}{\rm Re}\left( \frac{ -\Log(\epsilon_{1,n_k}) }{\mu_{1,n_k}} \right)=2S>0.
			\end{align*}
			Therefore,
			\begin{align*}
			&\liminf_{k\to\infty} {\rm Re}\left(\frac{A_{n_k}(\mu_{1,n_k}+\mu_{2,n_k})}{\mu_{1,n_k}\mu_{2,n_k}(1+\epsilon_{2,n_k})}\right)
          \geq \liminf_{k\to\infty}\frac{2(1-|\epsilon_{2,n_k}|^2)}{|1+\epsilon_{2,n_k}|^2} + 2\sqrt{k_2^2-k_1^2}S>0,
			\end{align*}
			which is in contradiction with (\ref{eq:A:lim}).
			Secondly, $q_{1,n_k}\geq 0$ but $p_{1,n_k}=0$ for all $k\in\mathbb{Z}^+$
			(or there is a subsequence with such a property).
			Then, we have
			\[
			\Log(\epsilon_{1,n_k}) = -2q_{1,n_k}M_{2,n_k}  + \Log(e^{-2\bi q_{1,n_k}\bar{\sigma}_{2,n_k}}),
			\]
			so that
			\begin{align*}
			\lim_{k\to\infty}{\rm Im}\left( \frac{1-\epsilon_{1,n_k}}{\mu_{1,n_k}} \right) &= \lim_{k\to\infty}{\rm Im}\left( \frac{ -\Log(\epsilon_{1,n_k}) }{\mu_{1,n_k}} \right)=-2M<0.
			\end{align*}
			If $M=\infty$, then
			\begin{align*}
			&\limsup_{k\to\infty} {\rm Im}\left(\frac{A_{n_k}(\mu_{1,n_k}+\mu_{2,n_k})}{\mu_{1,n_k}\mu_{2,n_k}(1+\epsilon_{2,n_k})}\right) 
			\leq \limsup_{k\to\infty}\frac{-4{\rm Im}( \epsilon_{2,n_k} )}{|1+\epsilon_{2,n_k}|^2} -2\sqrt{k_2^2-k_1^2}M=-\infty,
			\end{align*}
			which contradicts (\ref{eq:A:lim}).
			Otherwise we must have $S=\infty$ so that
      $\lim_{k\to\infty}|\epsilon_{2,n_k}|=0$ and 
			\begin{align*}
			&\limsup_{k\to\infty} {\rm Im}\left(\frac{A_{n_k}(\mu_{1,n_k}+\mu_{2,n_k})}{\mu_{1,n_k}\mu_{2,n_k}(1+\epsilon_{2,n_k})}\right) \leq  -2\sqrt{k_2^2-k_1^2}M<0, 
			\end{align*}
			which again contradicts  (\ref{eq:A:lim}).
			The case when $\xi_0=\pm k_2$ can be analyzed similarly.
		
		To sum up, we claim that (\ref{eq:A:lim}) cannot occur. Therefore 
		\[
		\left| \frac{\mu_1\mu_2}{\mu_1+\mu_2} \right|\lesssim |A|,
		\]
		for all $\xi\in\ol{\mathbb{C}^{+-}\cup\mathbb{C}^{-+}}\backslash
		\cup_{j=1}^2\{\xi\in\mathbb{C}\backslash\mathbb{R}: |\mu_j|<\varepsilon_0k_1\}$,
		$\bar{\sigma}_2\gtrsim k_1^{-1}$, and $M_2\gtrsim k_1^{-1}$.
		
		For the other two inequalities, we follow the same procedure by
		considering two cases. For brevity, we prove the estimate for
		$|(\epsilon_1 - 1)\mu_2|$ in case (II) only.
		Consider $\xi_0=\pm k_1$ in the following since the case $\xi_0=\pm k_2$ is similar. As before, we still distinguish two cases for the
		subsequence $\{n_k\}_{k=1}^{\infty}$ with
		$\lim_{k\to\infty}\epsilon_{1,n_k}=1$: Firstly, $p_{1,n_k}\geq 0$ but
		$q_{1,n_k}=0$. We have
		\begin{align*}
		\liminf_{k\to\infty}{\rm Re}\left( \frac{\mu_{1,n_k}}{1-\epsilon_{1,n_k}} \right)
		&= \liminf_{k\to\infty}{\rm Re}\left( \frac{\mu_{1,n_k}} { -\Log(\epsilon_{1,n_k}) }\right)\\
		&=\liminf_{k\to\infty}{\rm Re}\left( \frac{p_{1,n_k}}{ 2p_{1,n_k}\bar{\sigma}_{2,n_k} - \Log(e^{2\bi p_{1,n_k}M_{2,n_k}}) } \right) \geq 0,
		\end{align*}
		so that 
		\begin{align*}
      &\liminf_{k\to\infty} {\rm Re}\left(\frac{A_{n_k}}{(1-\epsilon_{1,n_k})\mu_{2,n_k}(1-\epsilon_{2,n_k})}\right)
        \geq \liminf_{k\to\infty}\frac{1-|\epsilon_{2,n_k}|^2}{|1+\epsilon_{2,n_k}|^2} >0,
		\end{align*}
		which is in contradiction with (\ref{eq:A:lim}).
		Secondly, consider $p_{1,n_k}=0$ but $q_{1,n_k}\geq 0$. Suppose first
		$M=\infty$ or 
		\[
		\limsup_{k\to\infty}\left|\frac{\Log(e^{-2\bi q_{1,n_k}\bar{\sigma}_{2,n_k}})}{q_{1,n_k}}\right| = \infty,
		\]
		then since
		\begin{align*}
		&\liminf_{k\to\infty}\left| \frac{\mu_{1,n_k}}{1-\epsilon_{1,n_k}} \right| =\liminf_{k\to\infty}\left| \frac{q_{1,n_k}\bi}{2q_{1,n_k}M_{2,n_k} - \Log(e^{-2\bi q_{1,n_k}\bar{\sigma}_{2,n_k}})} \right|= 0, 
		\end{align*}
		so that we have
		\begin{align*}
		&\limsup_{k\to\infty} {\rm Re}\left(\frac{A_{n_k}}{(1-\epsilon_{1,n_k})\mu_{2,n_k}(1-\epsilon_{2,n_k})}\right) \geq \limsup_{k\to\infty}\frac{1-|\epsilon_{2,n_k}|^2}{|1+\epsilon_{2,n_k}|^2} >0,
		\end{align*}
		which contradicts (\ref{eq:A:lim}).
		Otherwise we have $S=\infty$, $M<\infty$, and
		\[
		\limsup_{k\to\infty}\left|\frac{\Log(e^{-2\bi q_{1,n_k}\bar{\sigma}_{2,n_k}})}{q_{1,n_k}}\right| < \infty,
		\]
		so that $\lim_{k\to\infty}\epsilon_{2,n_k}=0$. One easily gets
		\begin{align*}
      \liminf_{k\to\infty}{\rm Im}\left( \frac{\mu_{1,n_k}}{1-\epsilon_{1,n_k}} \right)
      &= \liminf_{k\to\infty}{\rm Im}\left( \frac{\mu_{1,n_k}} { -\Log(\epsilon_{1,n_k}) }\right)\\
      &=\liminf_{k\to\infty}\frac{2M_{2,n_k}}{( 2M_{2,n_k} )^2 +  \left|\Log(e^{-2\bi q_{1,n_k}\bar{\sigma}_{2,n_k}})/q_{1,n_k}\right|^2}\\ 
      &\geq \frac{2M}{(2M)^2 + \limsup_{k\to\infty}\left|\Log(e^{-2\bi q_{1,n_k}\bar{\sigma}_{2,n_k}})/q_{1,n_k}\right|^2}>0.
		\end{align*}
		Thus,
		\begin{align*}
		&\liminf_{k\to\infty} {\rm Im}\left(\frac{A_{n_k}}{(1-\epsilon_{1,n_k})\mu_{2,n_k}(1-\epsilon_{2,n_k})}\right) =\liminf_{k\to\infty}{\rm Im}\left(1 + \frac{2\mu_{1,n_k}}{\sqrt{k_2^2-k_1^2}( 1-\epsilon_{1,n_k} )}  \right)>0,
		\end{align*}
		which is also in contradiction with (\ref{eq:A:lim}).
	\end{proof}
\end{mylemma}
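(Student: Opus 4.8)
The plan is a contradiction/compactness argument, set up after removing the $k_1$-dependence by scaling. From $\mu_j(\xi;k_j)=k_1\mu_j(\xi/k_1;k_j/k_1)$ and $\epsilon_j(\xi;k_j,\bar{\sigma}_2,M_2)=\epsilon_j(\xi/k_1;k_j/k_1,k_1\bar{\sigma}_2,k_1M_2)$ one gets $A(\xi;k_1,k_2,\bar{\sigma}_2,M_2)=k_1A(\xi/k_1;1,\kappa,k_1\bar{\sigma}_2,k_1M_2)$, so the asserted inequality is invariant under $(\xi,k_1,k_2,\bar{\sigma}_2,M_2)\mapsto(\xi/k_1,1,\kappa,k_1\bar{\sigma}_2,k_1M_2)$; I may therefore fix $k_1=1$, $k_2=\kappa$, upon which the hypotheses become $\bar{\sigma}_2\gtrsim1$, $M_2\gtrsim1$. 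The three quantities on the left all yield to the same scheme, so I would prove $|\mu_1\mu_2/(\mu_1+\mu_2)|\lesssim|A|$ in detail. If it failed, there would exist sequences $\xi_n\to\xi_0\in\mathbb{C}\cup\{\infty\}$, $\bar{\sigma}_{2,n}\to S\gtrsim1$, $M_{2,n}\to M\gtrsim1$ (possibly $S,M=+\infty$) with $|A(\xi_n)(\mu_1+\mu_2)/(\mu_1\mu_2)|\to0$, and the aim is to contradict this.

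The case $\max(S,M)<\infty$ is handled exactly as in Lemma~\ref{thmA5}: if $\xi_0\notin\{\pm k_1,\pm k_2\}$ then continuity gives $A(\xi_0)=0$, contradicting Proposition~\ref{PropA} and Lemma~\ref{lemA1}; if $\xi_0\in\{\pm k_1,\pm k_2\}$ one invokes the simple-zero estimate of Lemma~\ref{lemA3}. Assume henceforth $\max(S,M)=\infty$. If $|\xi_0|<\infty$ but $\xi_0\notin\{\pm k_1,\pm k_2\}$: for $\xi_0$ on the real or imaginary axis I would use the identity $A/[(1-\epsilon_1)(1-\epsilon_2)]=\tfrac{1+\epsilon_1}{1-\epsilon_1}\mu_1+\tfrac{1+\epsilon_2}{1-\epsilon_2}\mu_2$ (valid since $\epsilon_{j,n}\ne1$) together with ${\rm Re}\,\tfrac{1+\epsilon}{1-\epsilon}=\tfrac{1-|\epsilon|^2}{|1-\epsilon|^2}>0$ — legitimate because ${\rm Im}\,\mu_j(\xi_0)>0$ there forces $|\epsilon_{j,n}|<1$ — to keep ${\rm Re}\,A$ or ${\rm Im}\,A$ bounded below; for $\xi_0\in\mCpm\cup\mCmp$, the exclusion of $|\mu_j|<\varepsilon_0k_1$ off the real axis places $\mu_j(\xi_0)$ in the open first quadrant, so $|\epsilon_{j,n}|\to0$ and $|A(\xi_n)|\to|\mu_1(\xi_0)+\mu_2(\xi_0)|\ge\sqrt{\kappa^2-1}>0$. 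In either situation $|A(\xi_n)|$ stays bounded below while $|\mu_1\mu_2/(\mu_1+\mu_2)|$ stays bounded, contradicting the assumed vanishing.

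The substantive case is $\xi_0\in\{\pm k_1,\pm k_2\}$; by the off-axis exclusion, $\xi_n\in\mathbb{R}$ for large $n$, so each $\mu_{j,n}=p_{j,n}+\bi q_{j,n}$ obeys $p_{j,n}q_{j,n}=0$. Take $\xi_0=k_1$ (the other three values follow via $A(-\xi)=A(\xi)$ and by swapping $\mu_1\leftrightarrow\mu_2$), so $\mu_{1,n}\to0$ and $|\epsilon_{2,n}|$ has a limit $<1$. Since $\mu_{1,n}\to0$, $|A(\xi_n)|=|A(\xi_n)(\mu_1+\mu_2)/(\mu_1\mu_2)|\cdot|\mu_1\mu_2/(\mu_1+\mu_2)|\to0$, and since the first summand of $A$ (recall $A=(1-\epsilon_2)(1+\epsilon_1)\mu_1+(1-\epsilon_1)(1+\epsilon_2)\mu_2$ from \eqref{eq:def:A}) already tends to $0$, this forces $1-\epsilon_{1,n}\to0$, i.e.\ $\epsilon_{1,n}\to1$. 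Now split on $q_{1,n}=0$ ($\mu_{1,n}$ real) versus $p_{1,n}=0$ ($\mu_{1,n}$ purely imaginary). Writing $\tfrac{1-\epsilon_1}{\mu_1}=\tfrac{-\Log\epsilon_1}{\mu_1}\cdot\tfrac{\epsilon_1-1}{\Log\epsilon_1}$ with the last factor $\to1$, and using $\Log\epsilon_1=-2p_{1,n}\bar{\sigma}_{2,n}+\Log(e^{2\bi p_{1,n}M_{2,n}})$ in the first case and $\Log\epsilon_1=-2q_{1,n}M_{2,n}+\Log(e^{-2\bi q_{1,n}\bar{\sigma}_{2,n}})$ in the second (the logarithm normalized so its imaginary part lies in $(-\pi,\pi]$), one reads off ${\rm Re}\,\tfrac{1-\epsilon_{1,n}}{\mu_{1,n}}\to2S>0$ in the first case and ${\rm Im}\,\tfrac{1-\epsilon_{1,n}}{\mu_{1,n}}\to-2M$ (or $-\infty$) in the second. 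Substituting these into $\tfrac{A(\mu_1+\mu_2)}{\mu_1\mu_2(1+\epsilon_2)}=\tfrac{(1-\epsilon_2)(1+\epsilon_1)(\mu_1+\mu_2)}{(1+\epsilon_2)\mu_2}+\tfrac{(1-\epsilon_1)(\mu_1+\mu_2)}{\mu_1}$ and taking the relevant real or imaginary part — with $\mu_{1,n}+\mu_{2,n}\to\sqrt{\kappa^2-1}$ and ${\rm Im}(\mu_{1,n}+\mu_{2,n})\in\{0,q_{1,n}\}\to0$ controlling the cross terms — produces a limit bounded away from $0$ ($\ge2\sqrt{\kappa^2-1}\,S$ in the first case; in the second, ${\rm Im}$ of the quotient is $\le-2\sqrt{\kappa^2-1}\,M<0$ if $M<\infty$, in which case necessarily $S=\infty$ and $\epsilon_{2,n}\to0$, and $\to-\infty$ if $M=\infty$), contradicting the hypothesis. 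The remaining inequalities $|(\epsilon_1-1)\mu_2|\lesssim|A|$ and $|(\epsilon_2-1)\mu_1|\lesssim|A|$ come from the same case analysis applied to $A/[(\epsilon_1-1)\mu_2]$ and $A/[(\epsilon_2-1)\mu_1]$. I expect the main obstacle to be precisely this last regime: with $\mu_1\to0$ while $M_{2,n}$ and/or $\bar{\sigma}_{2,n}\to\infty$, the ratio $(1-\epsilon_1)/\mu_1$ is an indeterminate $0\cdot\infty$ form, and correctly tracking, via the logarithm branch, which of its real and imaginary parts survives in the limit — and checking that it is not cancelled by the term $(1-\epsilon_2)(1+\epsilon_1)(\mu_1+\mu_2)/[(1+\epsilon_2)\mu_2]$ — is the delicate point.
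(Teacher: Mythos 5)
Your proposal follows the paper's proof essentially step for step: the same scaling to $k_1=1$, the same compactness/contradiction setup over the three sequences, the same case split on whether $\xi_0$ is one of the four branch points, and the same logarithm-branch bookkeeping near $\xi_0=\pm k_1,\pm k_2$ to resolve the $0\cdot\infty$ indeterminacy in $(1-\epsilon_1)/\mu_1$. The only (inconsequential) deviation is that you deduce $\epsilon_{1,n}\to1$ directly from $|A_n|\to0$ and the vanishing of the first summand of $A$, whereas the paper first shows $\limsup_n\mathrm{Re}(\epsilon_{1,n})=1$ by a lower bound and then passes to a subsequence; both reach the same conclusion.
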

The next lemma can be taken as a refined version of lemma \ref{lemA7} in the
case when  $x\in B_{\rm in}$ and  $y\in D$.
\begin{mylemma}\label{lemA42}
  For $x\in B_{\rm in}$ and $y\in D$, $\xi\in
  \ol{\mathbb{C}^{+-}\cup\mathbb{C}^{-+}}\backslash\cup_{l=1}^2\{\xi\in\mathbb{C}\backslash\mathbb{R}:|\mu_l|\leq
  \varepsilon_0k_1\}$ with $\varepsilon_0\eqsim 1$, the following property holds
  for $f_{x_2,y_2}^{i,j,l}(\xi)$, $i,j,l=1,2$,
	\begin{align*}
    |f_{x_2,y_2}^{i,j;l}(\xi)|\lesssim \frac{\left|e^{\bi\mu_l(d_2+\bi\bar{\sigma}_2) }A\right| }{|\mu_i|},\quad
    |\partial_{x_2}f_{x_2,y_2}^{i,j;l}(\xi)|\lesssim \left|e^{\bi\mu_l(d_2+\bi\bar{\sigma}_2) }A\right|.
	\end{align*}
	\begin{proof}
		We prove $j=1$ only; case $j=2$ can be analyzed similarly. According to
    \eqref{eq:def:111}, {Lemma \ref{techlemma2}, and Lemma 
      \ref{thmA41}}, we see that
		\begin{align*}
      |f_{x_2,y_2}^{1,1;1}|\lesssim & \left|(1-\epsilon_2)e^{\bi\mu_1(d_2+\bar{\sigma}_2\bi)}\right|+\left|\frac{\mu_2e^{\bi\mu_1(d_2+\bar{\sigma}_2\bi)}}{\mu_1+\mu_2}\right| + \frac{|\mu_2|}{|\mu_1|}\left| e^{\bi\mu_1(\tilde{M}_2 + x_2+y_2)} - e^{\bi \mu_1(\tilde{M}_2 - y_2 + x_2)}\right|\\
                                    &+ \frac{|\mu_2|}{|\mu_1|}\left|e^{\bi \mu_1(3\tilde{M}_2 - y_2- x_2 )}   - e^{\bi \mu_1(3\tilde{M}_2 + y_2-x_2)}\right| + \frac{|\mu_2|}{|\mu_1|}\left|e^{\bi \mu_1(3\tilde{M}_2 + y_2 - x_2)}   - e^{\bi \mu_1(\tilde{M}_2 + y_2-x_2)}\right|\\
      \lesssim&\left|e^{\bi\mu_1(d_2+\bar{\sigma}_2\bi)}\right|\left(  \frac{|A|}{|\mu_1|} +\frac{|\mu_2|}{|\mu_1|}|e^{2\bi\mu_1 y_2}-1| + \frac{|\mu_2|}{|\mu_1|}|\epsilon_1-1|\right) \lesssim \frac{\left|e^{\bi\mu_1(d_2+\bar{\sigma}_2\bi)}A\right|}{|\mu_1|}.
		\end{align*}
		Similarly, one obtains that $|\partial_{x_2}
    f_{x_2,y_2}^{1,1;1}|\lesssim\left|e^{\bi\mu_1(d_2+\bar{\sigma}_2\bi)}A\right|$.
    The estimates for $f_{x_2,y_2}^{1,1;2}$ can be obtained readily by
    {Lemma \ref{thmA41}}. According to (\ref{eq:def:211}) and
    {Lemma \ref{thmA41}}, we see that
		\begin{align*}
      |f_{x_2,y_2}^{2,1;1}| \lesssim& \frac{\left|\mu_1e^{\bi\mu_1(d_2+\bi\bar{\sigma}_2)}\right|}{|\mu_1+\mu_2|} + |e^{\bi\mu_1\tilde{M}_2 + y_2} - e^{\bi \mu_1\tilde{M}_2 - y_2}|\\
      \lesssim&\frac{\left|Ae^{\bi\mu_1(d_2+\bi\bar{\sigma}_2)}\right|}{|\mu_2|} + \left| e^{\bi\mu_1(d_2+\bi\bar{\sigma}_2)} \right||e^{2\bi\mu_1y_2}-1| \lesssim\frac{\left|e^{\bi\mu_1(d_2+\bi\bar{\sigma}_2)}A\right|}{|\mu_2|}.
		\end{align*}
		Similarly, (\ref{eq:def:212}) leads to
		\begin{align*}
      |f_{x_2,y_2}^{2,1;2}| \lesssim&|\epsilon_1-1|\left|e^{\bi\mu_2(d_2+\bi\bar{\sigma}_2)}\right| + \frac{|\mu_1|}{|\mu_1+\mu_2|} \left|e^{\bi\mu_2(d_2+\bi\bar{\sigma}_2)}\right| + \left|e^{\bi\mu_2(d_2+\bi\bar{\sigma}_2)}\right|\\
                                      &\left(  |e^{\bi\mu_1(2\tilde{M}_2-y_2)} - e^{\bi \mu_1 (2\tilde{M_2}+y_2)}| + |\epsilon_1-1||e^{\bi\mu_1y_2}|\right) \lesssim\frac{\left|e^{\bi\mu_2(d_2+\bi\bar{\sigma}_2)}A\right|}{|\mu_2|}.
		\end{align*}
		The estimates for $\partial_{x_2} f_{x_2,y_2}^{2,1;l}, l=1,2$, can be
		similarly analyzed.
	\end{proof}
\end{mylemma}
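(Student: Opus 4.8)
The plan is to treat the four explicit coefficient formulas \eqref{eq:def:111}--\eqref{eq:def:212} one at a time and, inside every summand, to pull out the prescribed exponential $e^{\bi\mu_l(d_2+\bi\bar\sigma_2)}$, reducing each claim to a bound $\lesssim|A|/|\mu_i|$ (respectively $\lesssim|A|$ for the $x_2$-derivative) on what is left, with constants independent of $k_j,L_j,d_j,\bar\sigma_j$. Three preliminary facts enable the factoring. First, since $x\in B_{\rm in}$ and $y\in D$ we have $\tilde x_2=x_2\in\mathbb{R}$ with $\tilde x_2^+=|x_2|\le L_2/2$ and $\tilde y_2=y_2\in\mathbb{R}$ with $\tilde y_2^+=|y_2|\le R<L_2/2$, the last inequality being \eqref{eq:assump1}. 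Second, $\tilde M_2=M_2+\bi\bar\sigma_2$ with $M_2=L_2/2+d_2$, so $e^{\bi\mu_l\tilde M_2}=e^{\bi\mu_l(d_2+\bi\bar\sigma_2)}e^{\bi\mu_lL_2/2}$ and, more generally, each exponent $\bi\mu_l(\tilde M_2+\cdots)$, $\bi\mu_l(3\tilde M_2-\cdots)$, or $2\bi\mu_l\tilde M_2$ occurring in \eqref{eq:def:111}--\eqref{eq:def:212} can be written as $\bi\mu_l(d_2+\bi\bar\sigma_2)$ plus a remainder with nonnegative real part, using $|x_2|,|y_2|\le L_2/2$ and $M_2=L_2/2+d_2$. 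Third, on the admissible set $\mu_l\in\overline{\mCpp}$, so ${\rm Re}\,\mu_l\ge0$, ${\rm Im}\,\mu_l\ge0$, whence $|e^{\bi\mu_lw}|\le1$ for every $w$ with ${\rm Re}\,w\ge0$, ${\rm Im}\,w\ge0$; since $d_2,\bar\sigma_2>0$ this makes every residual exponential have modulus at most one.

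Carrying out the factoring, each $f_{x_2,y_2}^{i,j;l}$ equals $e^{\bi\mu_l(d_2+\bi\bar\sigma_2)}$ times a finite sum of terms, each a product of a modulus-at-most-one exponential with one of the ``small'' quantities $\mu_{3-i}/(\mu_1+\mu_2)$, $\epsilon_1-1$, $\epsilon_2-1$, $e^{2\bi\mu_i\tilde y_2^+}-1$, or a difference of two exponentials of the same type (recall $\epsilon_l-1=e^{2\bi\mu_l\tilde M_2}-1$). These are controlled by the two earlier estimates: Lemma~\ref{thmA41} gives $|(\epsilon_1-1)\mu_2|\lesssim|A|$, $|(\epsilon_2-1)\mu_1|\lesssim|A|$, and $|\mu_1\mu_2/(\mu_1+\mu_2)|\lesssim|A|$ on exactly $\overline{\mCpm\cup\mCmp}$ with the neighbourhoods $\{|\mu_l|\le\varepsilon_0k_1\}$ of the zeros of $A$ removed --- which is precisely why the statement excises that region --- while Lemma~\ref{techlemma2} yields $|e^{2\bi\mu_l\tilde y_2^+}-1|\lesssim|\mu_l|/|\mu_1+\mu_2|$ (using $\tilde y_2^+\le R$) and likewise bounds the differences of exponentials. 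Dividing by $|\mu_i|$ where needed, each surviving term is $\lesssim|A|/|\mu_i|$, giving the first estimate. For the second, one differentiates the explicit formulas in $x_2$; since $\alpha_2\equiv1$ on $B_{\rm in}$ and $\tilde x_2=x_2$, each derivative brings down at most a single factor $\bi\mu_i$ in front of an exponential of the same form, so $|\partial_{x_2}f_{x_2,y_2}^{i,j;l}|\lesssim|\mu_i|\cdot|e^{\bi\mu_l(d_2+\bi\bar\sigma_2)}A|/|\mu_i|=|e^{\bi\mu_l(d_2+\bi\bar\sigma_2)}A|$. It suffices to do $j=1$; the case $j=2$ follows from the symmetry $\mu_1\leftrightarrow\mu_2$, $\epsilon_1\leftrightarrow\epsilon_2$.

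The step I expect to be the main obstacle is the term-by-term bookkeeping for the cross-layer coefficients $f_{x_2,y_2}^{3-i,i;l}$ in \eqref{eq:def:211}--\eqref{eq:def:212}, where the two exponentials carry $\mu_i$ and $\mu_{3-i}$ separately: one must check that the correct index $l$ is extracted from the $\mu_l$-exponent and that the remaining exponents --- quantities such as $3\tilde M_2-\tilde x_2^+-\tilde y_2^+-(d_2+\bi\bar\sigma_2)$ or $2\tilde M_2-\tilde y_2^+$ --- still have nonnegative real part, so that the leftover exponential stays bounded by one. The inequalities $|x_2|\le L_2/2$, $|y_2|\le R<L_2/2$ and the identity $M_2=L_2/2+d_2$ are exactly what make this work, and one must be careful at every step that the constants produced do not depend on $k_j,L_j,d_j,\bar\sigma_j$ for $j=1,2$.
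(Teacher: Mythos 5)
Your proposal is essentially the same as the paper's proof: factor out the prescribed exponential $e^{\bi\mu_l(d_2+\bi\bar\sigma_2)}$ using $\tilde M_2 = M_2+\bi\bar\sigma_2$ with $M_2 = L_2/2+d_2$, observe that the leftover exponentials have modulus at most one because $\mu_l\in\overline{\mCpp}$ and the remaining exponents have nonnegative real and imaginary parts (thanks to $|x_2|\le L_2/2$, $|y_2|\le R<L_2/2$), and then bound the surviving coefficients by combining Lemma~\ref{thmA41} (for $|(\epsilon_1-1)\mu_2|$, $|(\epsilon_2-1)\mu_1|$, $|\mu_1\mu_2/(\mu_1+\mu_2)|$) with Lemma~\ref{techlemma2} (for differences of exponentials), exactly as the paper does. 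The only place your writeup is looser than the paper's is in the phrase ``dividing by $|\mu_i|$ where needed'': the argument actually hinges on the fact that each ``small'' quantity in the decomposition arrives already paired with the \emph{correct} power of $\mu_j$ — for instance, $\epsilon_1-1$ appears in $f^{1,1;1}$ only with a $\mu_2/\mu_1$ in front, which is essential because thmA41 bounds $(\epsilon_1-1)\mu_2$, not $(\epsilon_1-1)\mu_1$ — and the paper makes that pairing explicit in its displayed estimate, whereas you gesture at it. That is the one step worth spelling out, since an unpaired $|\epsilon_1-1|\lesssim |A|/|\mu_2|$ would not give $\lesssim|A|/|\mu_1|$. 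Otherwise the decomposition, the role of $\varepsilon_0$, the use of both lemmas, the $j=1\!\leftrightarrow\!j=2$ symmetry, and the $\partial_{x_2}$ derivative bringing down one extra factor of $\mu_i$ all match the paper.
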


Let 
\[
B_{\delta_0}= \{\xi=\xi_1+\bi\xi_2:
0\leq\xi_{1}\leq\sqrt{2}k_1/2,0\leq\xi_2\leq\frac{\delta_0}{M_2}\lesssim
\delta_0 k_1\}.
\]
where $\delta_0\leq \frac{\sqrt{2}k_1}{4}\bar{\sigma}_2$.
\begin{mylemma}\label{LemImmu}
	For any $\xi\in B_{\delta_0}$ and $x_2\in[0,L_2/2]$, we have that
	\begin{equation}
	{\rm Im}(\mu_j(\tilde{M}_2-x_2))\geq \max\left\{\frac{\sqrt{2}}{2}k_1\bar{\sigma}_2 - \delta_0,\delta_0\right\},\quad {\rm Im}(\mu_jx_2)\geq -\delta_0.
	\end{equation}
	for $j=1,2$.
	\begin{proof}
		Let $\xi = \xi_1 + \bi \xi_2\in B_{\delta_0}$ and $\mu_j = p_j -\bi q_j$
		with $ \xi_1,\xi_2,p_j,q_j\geq 0$. Then, one gets
		\[
		(p_{j}^2 + \xi_{1}^2)(1-\frac{\xi_{2}^2}{p_{j}^2}) = (p_j^2 - \xi_2^2)(1 +
		\frac{\xi_{1}^2}{p_{j}^2})= k_j^2,\quad p_jq_j = \xi_1\xi_2.
		\]
		Thus, $q_j = \frac{\xi_1\xi_2}{p_j}\leq \frac{\delta_0\xi_1}{p_jM_2}$ and $p_j\geq \sqrt{k_j^2-\xi_1^2}\geq \xi_1$,
		so that
		\[
		{\rm Im}(\mu_j(\tilde{M}_2-x_2)) = p_j\bar{\sigma}_2 - q_j(M_2-x_2)\geq
		p_j\bar{\sigma}_2 - \frac{\delta_0\xi_1}{p_j}\geq \sqrt{k_j^2-\xi_1^2}\bar{\sigma}_2 - \frac{\delta_0\xi_1}{\sqrt{k_j^2-\xi_1^2}}.
		\]
		Since $\xi_1\leq \frac{\sqrt{2}}{2}k_1$ and since $k_j\geq k_1$, we get
		${\rm Im}(\mu_j(\tilde{M}_2-x_2))\geq \frac{\sqrt{2}}{2}k_1\bar{\sigma}_2 - \delta_0 \geq \delta_0$. One similarly obtains 
		${\rm Im}(\mu_j x_2) = -q_jx_2\geq -\frac{\delta_0\xi_1x_2}{\sqrt{k_j^2-\xi_1^2}M_2}\geq -\delta_0$.
		
	\end{proof}
\end{mylemma}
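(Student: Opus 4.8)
The plan is to argue by elementary manipulations starting from the defining relation $\mu_j^2 = k_j^2-\xi^2$, since on the small box $B_{\delta_0}$ the quantity $\mu_j$ sits just off the propagative interval and is well under control. Writing $\xi = \xi_1+\bi\xi_2$ with $\xi_1,\xi_2\ge 0$ and $\mu_j = p_j-\bi q_j$ with $p_j,q_j\ge 0$ (the branch used throughout the paper for $\xi$ near the positive real axis), I would first separate real and imaginary parts of $\mu_j^2 = k_j^2-\xi^2$ to obtain $p_jq_j = \xi_1\xi_2$ and, after eliminating $q_j$, the identity $(p_j^2+\xi_1^2)\bigl(1-\xi_2^2/p_j^2\bigr) = k_j^2$ (equivalently $(p_j^2-\xi_2^2)(1+\xi_1^2/p_j^2) = k_j^2$). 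Because the right-hand side equals $k_j^2>0$ while $p_j^2+\xi_1^2>0$, the factor $1-\xi_2^2/p_j^2$ is automatically positive, hence $\le 1$, which forces $p_j^2+\xi_1^2\ge k_j^2$, i.e. $p_j\ge\sqrt{k_j^2-\xi_1^2}$. Since $\xi_1\le\tfrac{\sqrt2}{2}k_1\le\tfrac{\sqrt2}{2}k_j$, this also gives $p_j\ge\sqrt{k_j^2-\xi_1^2}\ge\tfrac{\sqrt2}{2}k_1\ge\xi_1$, so that $q_j = \xi_1\xi_2/p_j\le\xi_2\le\delta_0/M_2$; I would retain the sharper form $q_j\le\delta_0\xi_1/(p_jM_2)$ for use below.

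Next I would compute the two imaginary parts directly. The conditions on $\sigma_2$ give $\tilde{M}_2 = M_2+\bi\bar\sigma_2$, and for $x_2\in[0,L_2/2]$ one has $0\le x_2\le M_2$ and $0\le M_2-x_2\le M_2$, so ${\rm Im}\bigl(\mu_j(\tilde{M}_2-x_2)\bigr) = p_j\bar\sigma_2 - q_j(M_2-x_2)\ge p_j\bar\sigma_2 - \delta_0\xi_1/p_j$ once the bound on $q_j$ is inserted. Since $p\mapsto p\bar\sigma_2-\delta_0\xi_1/p$ is increasing on $(0,\infty)$, evaluating at $p_j\ge\sqrt{k_j^2-\xi_1^2}$ yields the lower bound $\sqrt{k_j^2-\xi_1^2}\,\bar\sigma_2 - \delta_0\xi_1/\sqrt{k_j^2-\xi_1^2}$; then $\sqrt{k_j^2-\xi_1^2}\ge\tfrac{\sqrt2}{2}k_1\ge\xi_1$ bounds the first term below by $\tfrac{\sqrt2}{2}k_1\bar\sigma_2$ and the second term above by $\delta_0$, so ${\rm Im}\bigl(\mu_j(\tilde{M}_2-x_2)\bigr)\ge\tfrac{\sqrt2}{2}k_1\bar\sigma_2-\delta_0$. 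The hypothesis $\delta_0\le\tfrac{\sqrt2}{4}k_1\bar\sigma_2$ then upgrades this to $\ge\delta_0$ as well, giving the claimed maximum. For the second estimate, since $x_2\ge0$ is real, ${\rm Im}(\mu_jx_2) = -q_jx_2$, and $q_jx_2\le\delta_0\xi_1x_2/(p_jM_2)\le\delta_0$ because $x_2\le L_2/2\le M_2$ and $\xi_1/p_j\le\xi_1/\sqrt{k_j^2-\xi_1^2}\le1$; hence ${\rm Im}(\mu_jx_2)\ge-\delta_0$.

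I do not expect a genuine obstacle: the statement is book-keeping once the algebraic relations for $p_j$ and $q_j$ are in hand. The only points deserving care are (i) deducing $p_j\ge\sqrt{k_j^2-\xi_1^2}$ from the identity via positivity of $k_j^2$, rather than assuming a priori that $\xi_2<p_j$; (ii) estimating $p_j\bar\sigma_2-\delta_0\xi_1/p_j$ by monotonicity in $p_j$ instead of bounding the two summands separately, which would destroy the clean constant; and (iii) using the constraints defining $B_{\delta_0}$ — namely $\xi_1\le\tfrac{\sqrt2}{2}k_1$ and $\xi_2\le\delta_0/M_2$ — together with $\delta_0\le\tfrac{\sqrt2}{4}k_1\bar\sigma_2$ in exactly the places indicated, so that no stronger hypothesis creeps in.
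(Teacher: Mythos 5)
Your proof is correct and follows essentially the same algebraic bookkeeping as the paper's: derive $p_j q_j = \xi_1\xi_2$ and the identity $(p_j^2+\xi_1^2)(1-\xi_2^2/p_j^2)=k_j^2$, deduce $p_j\ge\sqrt{k_j^2-\xi_1^2}\ge\xi_1$ and $q_j\le\delta_0\xi_1/(p_jM_2)$ from $\xi\in B_{\delta_0}$, and substitute into the imaginary-part formulas before invoking $\delta_0\le\tfrac{\sqrt2}{4}k_1\bar\sigma_2$. The only (minor) differences are cosmetic: you explicitly justify $p_j\ge\sqrt{k_j^2-\xi_1^2}$ via positivity of the factor $1-\xi_2^2/p_j^2$, and you name the monotonicity of $p\mapsto p\bar\sigma_2-\delta_0\xi_1/p$ where the paper simply writes the chained inequality — both are steps the paper uses implicitly.
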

The following lemma is concerned with the uniformly lower bound of $A$ in $B_{\delta_0}$.
\begin{mylemma}\label{lemABdelta0}
	There exists a constant $\delta_0\eqsim 1$, such that $A(\xi;\bar{\sigma}_{2},M_2)\neq 0$ in
	$B_{\delta_0}$ for all $\bar{\sigma}_2\gtrsim k_1^{-1}$ and 
	$M_2\gtrsim k_1^{-1}$. Furthermore, 
	\[
	|\mu_1+\mu_2|\lesssim|A(\xi;\bar{\sigma}_2,M_2)|,
	\] 
	for all $\xi\in B_{\delta_0}$, $\bar{\sigma}_2\gtrsim k_1^{-1}$ and $M_2\gtrsim k_1^{-1}$.
	
	\begin{proof}
		As in the proof of Lemma~\ref{thmA41}, we do the rescaling
		by assuming $k_1=1$ and $k_2=\kappa$ in the following.
		We first prove there exists a non-empty region $B_{\delta_0}$ such that $A\neq 0$ for $\xi\in B_{\delta_0}$.
		Suppose otherwise there exist a sequence of $\{\xi_n,
		\delta_n,\bar{\sigma}_{2,n},M_{2,n}\}$ with $\lim_{n\to\infty}\xi_n=\xi_0\in
		B_{\delta_0}$, $\lim_{n\to\infty}\delta_n=0$,
		$\lim_{n\to\infty}\bar{\sigma}_{2,n}=S\gtrsim 1$ and
		$\lim_{n\to\infty}M_{2,n}=M\gtrsim 1$, such that $A(\xi_n;\bar{\sigma}_{2,n},M_{2,n})=0$,
		for all $n\in\mathbb{N}$. As a result, $ \lim_{n\to\infty}{\rm Im}(\xi_n) = 0$, which implies
		$\xi_0\in[0,k_1/\sqrt{2}]$. 

    We cliam that $\max(S,M)=\infty$, since otherwise, $A(\xi_0;S,M) = 0$, which
    is in contradiction with Lemma \ref{lemA1}. Let $\xi_n =
    \xi_{1,n}+\bi\xi_{2,n}$, $\mu_{j,n}=\sqrt{k_j^2-\xi_{n}^2}=p_{j,n}-\bi
    q_{j,n}$ with $p_{j,n},q_{j,n}\geq 0$ for $j=1,2$. Then, by Lemma
    \ref{LemImmu},
		\begin{align*}
		\liminf_{n\to\infty}{\rm Im}(\mu_{j,n}\tilde{M}_2)\geq \liminf_{n\to\infty}\frac{\sqrt{2}}{2}k_1\bar{\sigma}_{2,n}-\delta_{n}\geq \frac{\sqrt{2}}{2}k_1S,
		\end{align*}
		so that if $S=\infty$,  $\epsilon_{j,n}\to 0$ and
		$\lim_{n\to\infty} A(\xi_n;\bar{\sigma}_{2,n},M_{2,n}) = \sqrt{k_1^2-\xi_0^2} + \sqrt{k_2^2-\xi_0^2} > 0$,
		which is a contradiction.
		Now let us assume $M=\infty$ and $S<\infty$. Since $\lim_{n\to\infty}q_{j,n}=0$, we get
		\begin{align*}
      &\liminf_{n\to\infty}\left|  \frac{A(\xi_n;\bar{\sigma}_{2,n},M_{2,n})}{(1-\epsilon_{1,n})(1-\epsilon_{2,n})}\right|\\
      &\geq\liminf_{n\to\infty}\sum_{i=1}^2 \frac{(1-|\epsilon_{i,n}|^2)}{1+|\epsilon_{i,n}|^2}p_{i,n} - \limsup_{n\to\infty}\sum_{i=1}^2 \frac{2|{\rm Im}(\epsilon_{i,n})|}{1+|\epsilon_{i,n}|^2}q_{i,n} =\sum_{i=1}^2 \frac{1-e^{-\sqrt{2}k_1S}}{1+e^{-\sqrt{2}k_1S}}\sqrt{k_i^2-\xi_0^2}>0,
		\end{align*}
		which is also a contradiction. Consequently, there must exist a constant
		$\delta_0\eqsim 1$ with the desired property.
		
		Now we prove that $|A|$ has a uniformly nonzero lower bound in
    $B_{\delta_0}$. Otherwise, we can find a sequence $\{\xi_n, \bar{\sigma}_{2,n},
    M_{2,n}\}_{n=1}^{\infty}$ with $\xi_n\to\xi_0\in B_{\delta_0}$,
    $\bar{\sigma}_{2,n}\to S\gtrsim 1$, and $M_{2,n}\to M\gtrsim 1$ such that
		\[
      \lim_{n\to\infty}\frac{|A(\xi_n;\bar{\sigma}_{2,n},M_{2,n})|}{|\mu_{1,n}+\mu_{2,n}|}
      = 0.
		\]
		Suppose first $\max(S,M)<\infty$, then we must have $A(\xi_0;S,M) = 0$,
		which is impossible according to the choice of $\delta_0$.
		Now consider the case $\max(S,M)=\infty$. It holds
		\[
		{\rm Im}(\mu_{j,n}\tilde{M}_{2,n}) \geq
		\max\left\{\frac{\sqrt{2}}{2}k_1\bar{\sigma}_{2,n} - \delta_0,\delta_0  \right\}.
		\]
		If $S=\infty$, then $\epsilon_{j,n}\to 0$ so that
		$\lim_{n\to\infty} A(\xi_n;\bar{\sigma}_{2,n},M_{2,n}) = \sqrt{k_1^2-\xi_0^2} + \sqrt{k_2^2-\xi_0^2} > 0$,
		which is a contradiction.
		If $M=\infty$ and $S<\infty$,  then
		$\lim_{n\to\infty}q_{j,n}=0$ by the definition of $B_{\delta_0}$, which implies
		\begin{align*}
		&\liminf_{n\to\infty}\left|  \frac{A(\xi_n;\bar{\sigma}_{2,n},M_{2,n})}{(1-\epsilon_{1,n})(1-\epsilon_{2,n})}\right| \geq\liminf_{n\to\infty}\sum_{i=1}^2 \frac{(1-|\epsilon_{i,n}|^2)}{1+|\epsilon_{i,n}|^2}p_{i,n} - \limsup_{n\to\infty}\sum_{i=1}^2 \frac{2|{\rm Im}(\epsilon_{i,n})|}{1+|\epsilon_{i,n}|^2}q_{i,n}\\
		&=\sum_{i=1}^2 \frac{(1-e^{-2\delta_0})}{1+e^{-2\delta_0}}\sqrt{k_i^2-\xi_0^2}>0,
		\end{align*}
		a contradiction.
	\end{proof}
\end{mylemma}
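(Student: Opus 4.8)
The plan is to establish both assertions by contradiction, after the same rescaling used in the proof of Lemma~\ref{thmA41}: the homogeneity $A(\xi;k_1,k_2,\bar{\sigma}_2,M_2)=k_1A(\xi/k_1;1,\kappa,k_1\bar{\sigma}_2,k_1M_2)$ lets one take $k_1=1$, $k_2=\kappa$, so that the hypotheses read $\bar{\sigma}_2\gtrsim1$, $M_2\gtrsim1$ and $B_{\delta_0}=\{\xi_1+\bi\xi_2:0\le\xi_1\le\sqrt2/2,\ 0\le\xi_2\le\delta_0/M_2\}$. On $B_{\delta_0}$ I write $\mu_j=p_j-\bi q_j$ with $p_j,q_j\ge0$ (the sign convention valid there, as in Lemma~\ref{LemImmu}), and I will repeatedly invoke: (i) Lemma~\ref{LemImmu} with $x_2=0$, which gives ${\rm Im}(\mu_j\tilde{M}_2)\ge\delta_0$, hence $|\epsilon_j|\le e^{-2\delta_0}<1$ on $B_{\delta_0}$, and which forces $|\epsilon_j|\to0$ as $\bar{\sigma}_2\to\infty$; (ii) the factorisation $A/((1-\epsilon_1)(1-\epsilon_2))=\frac{1+\epsilon_1}{1-\epsilon_1}\mu_1+\frac{1+\epsilon_2}{1-\epsilon_2}\mu_2$ coming from \eqref{eq:def:A}; and (iii) the estimate $q_j\le\delta_0\xi_1/(p_jM_2)$ (extracted from the proof of Lemma~\ref{LemImmu}), so that $q_j\to0$ whenever $M_2\to\infty$.

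\emph{Step 1 (existence of $\delta_0$).} If no admissible $\delta_0$ existed, I could choose $\delta_n\downarrow0$ and, for each $n$, parameters $\bar{\sigma}_{2,n}\gtrsim1$, $M_{2,n}\gtrsim1$ and a point $\xi_n\in B_{\delta_n}$ with $A_n:=A(\xi_n;\bar{\sigma}_{2,n},M_{2,n})=0$. Since ${\rm Im}(\xi_n)\le\delta_n/M_{2,n}\to0$, along a subsequence $\xi_n\to\xi_0\in[0,\sqrt2/2]$ (so $\xi_0$ is real and different from $\pm k_1,\pm k_2$), $\bar{\sigma}_{2,n}\to S\gtrsim1$, $M_{2,n}\to M\gtrsim1$, with $S,M$ possibly $+\infty$. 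If $\max(S,M)<\infty$, passing to the limit gives $A(\xi_0;S,M)=0$, contradicting Lemma~\ref{lemA1}. If $S=\infty$, then $|\epsilon_{j,n}|\to0$ by (i), so $A_n\to\mu_1(\xi_0)+\mu_2(\xi_0)\ge\sqrt{k_2^2-k_1^2}>0$, impossible. If $M=\infty$ but $S<\infty$, then $q_{j,n}\to0$ by (iii), so on taking real parts in (ii) the cross-terms $2{\rm Im}(\epsilon_{j,n})q_{j,n}$ become negligible and, using $|\epsilon_{j,n}|\le e^{-2\delta_0}$, I obtain $\liminf_n{\rm Re}\big(A_n/((1-\epsilon_{1,n})(1-\epsilon_{2,n}))\big)\ge\sum_{j=1}^2\frac{1-e^{-2\delta_0}}{1+e^{-2\delta_0}}\sqrt{k_j^2-\xi_0^2}>0$, contradicting $A_n=0$. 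Hence some $\delta_0\eqsim1$ works.

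\emph{Step 2 (the bound $|\mu_1+\mu_2|\lesssim|A|$).} I run the same scheme with that $\delta_0$ now fixed: if the bound failed, there would be $\xi_n\to\xi_0\in B_{\delta_0}$, $\bar{\sigma}_{2,n}\to S\gtrsim1$, $M_{2,n}\to M\gtrsim1$ with $|A_n|/|\mu_{1,n}+\mu_{2,n}|\to0$. Since ${\rm Re}(\xi_0^2)\le\frac12k_1^2<k_1^2$, the quantity $\mu_{1,n}+\mu_{2,n}$ stays in a bounded subset of the open right half-plane bounded away from $0$, so $A_n\to0$. Then $\max(S,M)<\infty$ forces $A(\xi_0;S,M)=0$ with $\xi_0\in B_{\delta_0}$, contradicting Step~1; $S=\infty$ gives $|\epsilon_{j,n}|\to0$, whence $A_n\to\mu_1(\xi_0)+\mu_2(\xi_0)\ne0$, a contradiction; and $M=\infty$, $S<\infty$ gives $q_{j,n}\to0$, $\xi_0$ real, and, exactly as in Step~1, $\liminf_n|A_n|\ge\sum_{j=1}^2\frac{1-e^{-2\delta_0}}{1+e^{-2\delta_0}}\sqrt{k_j^2-\xi_0^2}>0$, again a contradiction.

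I expect the genuinely delicate point to be the case $M=\infty$, $S<\infty$ in both steps: there $|\epsilon_j|$ need not be small, so one cannot simply read off the dominant term of $A$, and one must simultaneously use that $q_j\to0$ (to discard the imaginary cross-terms in ${\rm Re}(\frac{1+\epsilon_j}{1-\epsilon_j}\mu_j)$) and the uniform separation $|\epsilon_j|\le e^{-2\delta_0}$ from $1$. Both of these are exactly what the shape of $B_{\delta_0}$ — the constraints ${\rm Re}\,\xi\le\sqrt2k_1/2$ and ${\rm Im}\,\xi\lesssim\delta_0/M_2$ together with $\delta_0\lesssim k_1\bar{\sigma}_2$ — is designed to deliver through Lemma~\ref{LemImmu}.
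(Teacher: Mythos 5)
The proposal follows the paper's proof nearly line for line: the same rescaling to $k_1=1$, the same contradiction scheme with sequences $\{\xi_n,\delta_n,\bar{\sigma}_{2,n},M_{2,n}\}$ and the limit quadruple $(\xi_0,S,M)$, the same three-way case split $\max(S,M)<\infty$, $S=\infty$, $M=\infty$ with $S<\infty$, and the same inputs (Lemma~\ref{lemA1}, Lemma~\ref{LemImmu}, the real-part representation of $A/((1-\epsilon_1)(1-\epsilon_2))$). So this is not a genuinely different route, and Step~2 is fine as written.

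The one wrinkle is in Step~1, in the $M=\infty$, $S<\infty$ branch: you invoke $|\epsilon_{j,n}|\le e^{-2\delta_0}$, but in Step~1 no $\delta_0$ has been established yet --- that is precisely what you are trying to construct, and your contradiction hypothesis gives $\xi_n\in B_{\delta_n}$ with $\delta_n\downarrow 0$, so the na\"ive reading gives only $|\epsilon_{j,n}|\le e^{-2\delta_n}\to 1$, which is useless. Two clean repairs exist. (a) Use the \emph{other} branch of the max in Lemma~\ref{LemImmu}: ${\rm Im}(\mu_{j,n}\tilde M_{2,n})\ge \tfrac{\sqrt2}{2}k_1\bar\sigma_{2,n}-\delta_n\to \tfrac{\sqrt2}{2}k_1 S>0$, so $\limsup_n|\epsilon_{j,n}|\le e^{-\sqrt2 k_1 S}<1$; this is exactly what the paper does. (b) Observe that $B_{\delta_n}\subset B_{\delta}$ for the same $(\bar\sigma_{2,n},M_{2,n})$ whenever $\delta_n\le\delta\le \tfrac{\sqrt2}{4}k_1\bar\sigma_{2,n}$, and pick a fixed helper $\delta\eqsim 1$ (permissible since $\bar\sigma_{2,n}\gtrsim k_1^{-1}$ uniformly) so that Lemma~\ref{LemImmu} applied with this $\delta$ gives $|\epsilon_{j,n}|\le e^{-2\delta}$; you should then write this helper $\delta$ rather than $\delta_0$ to avoid the appearance of circularity. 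Either fix makes your Step~1 airtight, and then the proposal matches the paper's argument in all essentials.
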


\subsection{Property of $G_{\rm PML}$}
To study the difference between $G_{\rm PML}$ and $G_{\rm layer}$ as the absorption
of PML increases, we need to estimate term-by-term for the series in
\eqref{eq:infseries:G:1} and \eqref{eq:infseries:G:2}. In particular, we show
that all the contributions from terms in the series other than the zeroth order
term are exponentially small. Since $\bar{\sigma}_j$ and $d_j$ for $j=1,2$ vary,
we here introduce various integral paths and notations to estimate the
terms in (\ref{eq:infseries:G:1}) and (\ref{eq:infseries:G:2}).
\begin{figure}[!ht]
  \centering
  (a)\includegraphics[width=0.3\textwidth]{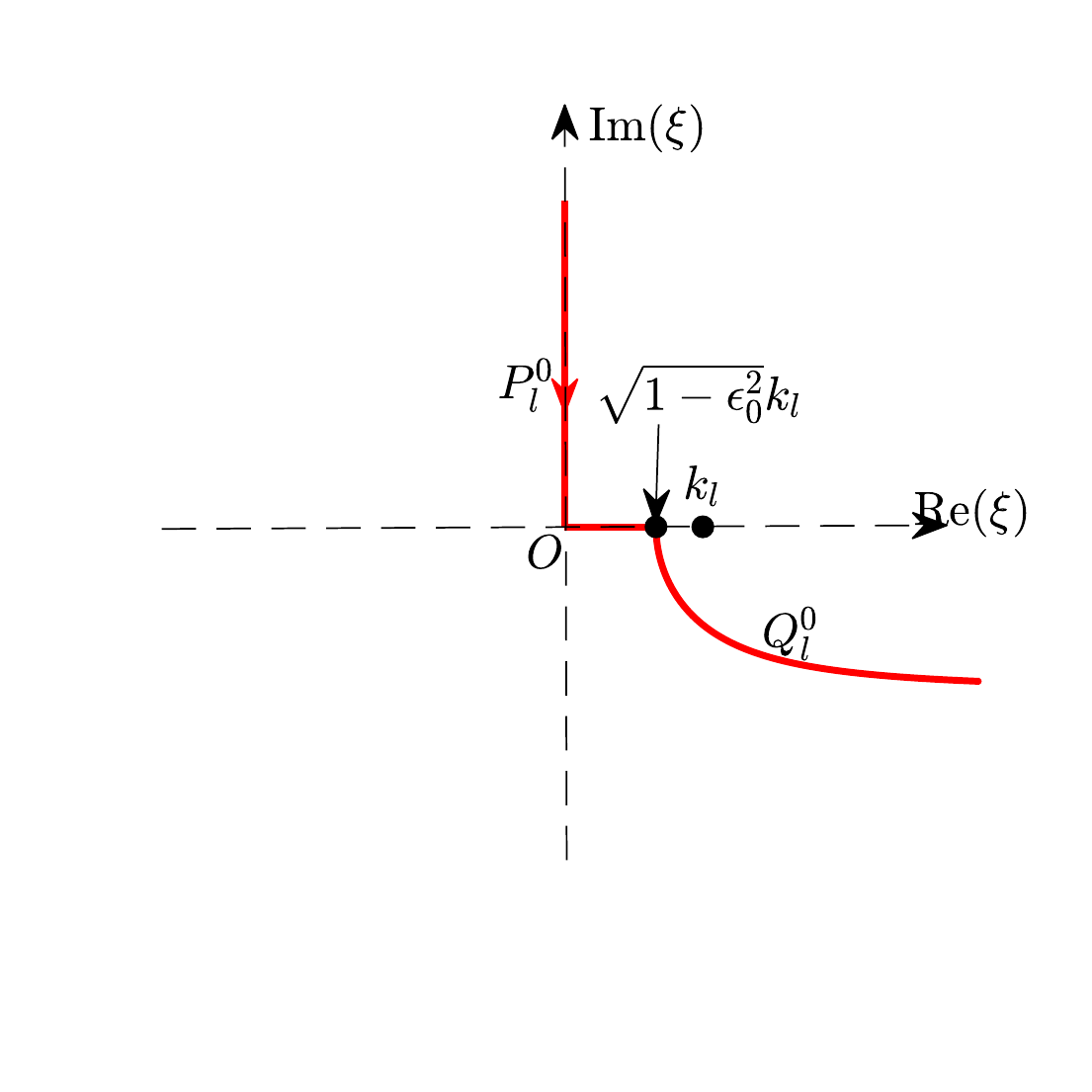}
  (b)\includegraphics[width=0.3\textwidth]{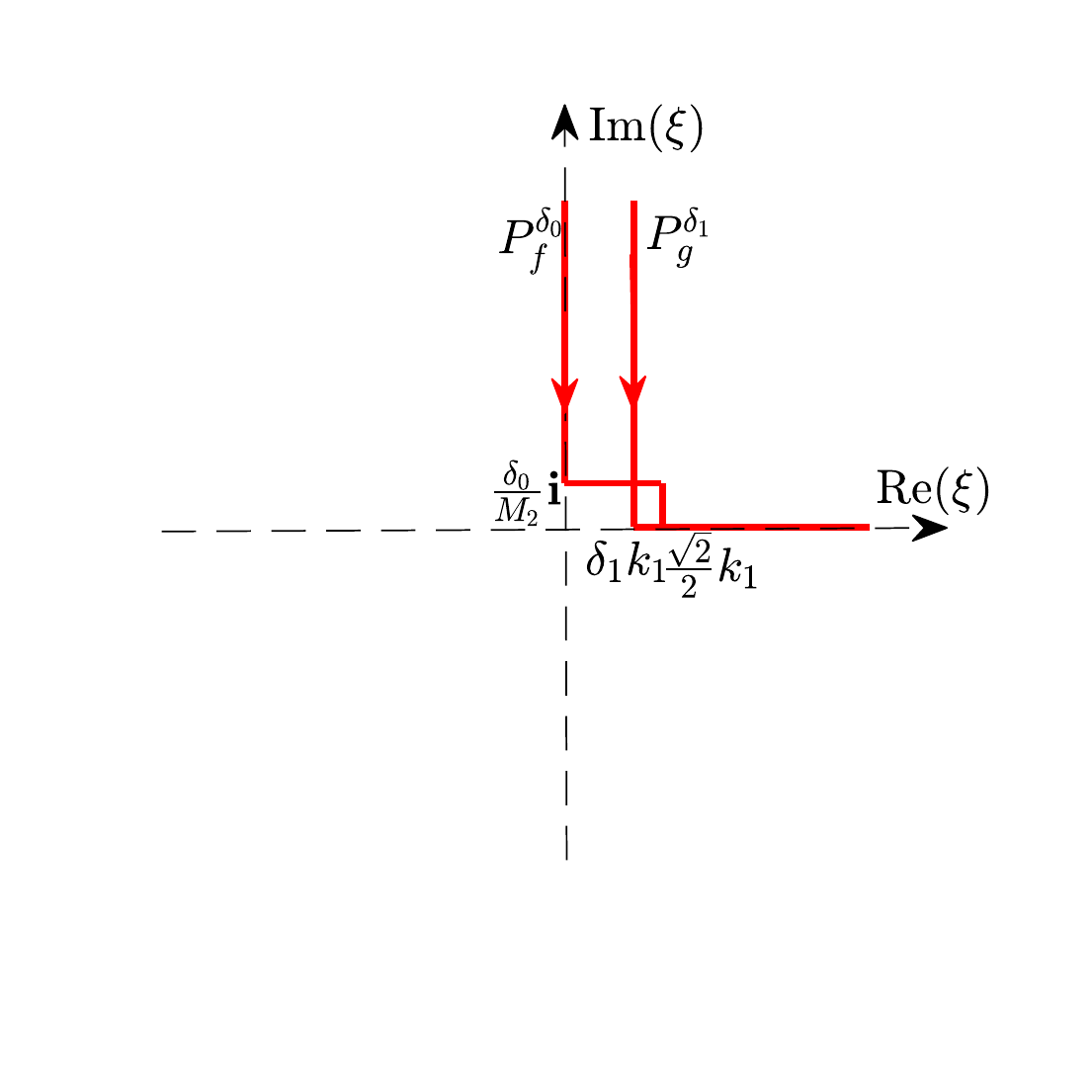}
  \vspace{-0.4cm}
  \caption{(a): path $P_l^0$. (b): paths $P_f^{\delta0}$ and
    $P_g^{\delta_1}$.}
  \label{fig:integralpaths}
\end{figure}

\begin{itemize}
	\item[{\bf (a)}:] Let $P_l^0$ be the following path
	\begin{equation}
	\label{eq:path0}
	P^0_l=\{\xi: +\infty\bi\to 0 \to \sqrt{1-\varepsilon_0^2}k_l\}\cup Q_l^0,
	\end{equation}
	where 
	\[
	Q_l^0: \{\xi=\sqrt{k_l^2-\mu_l^2}: \mu_l\in \varepsilon_0k_l\to \varepsilon_0k_l+\infty\bi\},
	\]
	for $l=1,2$, and $\varepsilon_0\eqsim 1$ satisfies the following inequalities, 
	\begin{equation}
	\label{eq:cond:eps0}
	\frac{L_2}{2} - \frac{\varepsilon_0L_1}{2\sqrt{1-\varepsilon_0^2}}\geq
	\frac{L_2}{4},\quad \varepsilon_0\leq \min\left\{\sqrt{\frac{k_2^2-k_1^2}{k_2^2+k_1^2}},\frac{2\sqrt{k_2^2-k_1^2}}{k_1}\right\}.
	\end{equation}
Figure~\ref{fig:integralpaths}(a) shows the path $P_l^0$.
	\item[{\bf (b)}:] Let $P_f^{\delta_0}$ be the following path,
	\begin{align}
	P_f^{\delta_0}&=\{\xi: +\infty\bi \to \frac{\delta_0}{M_2}\bi \to \frac{\delta_0}{M_2}\bi+\frac{\sqrt{2}}{2}k_1 \to \frac{\sqrt{2}}{2}k_1 \to +\infty\},
	\end{align}
	where $\delta_0\eqsim 1$ was introduced in {Lemma~\ref{lemABdelta0}}.  
	Let $P_g^{\delta_1}$ be the following path,
	\[
	P_{g}^{\delta_1}: \delta_1k_1 + \infty\bi \to \delta_1k_1\to \infty,
	\]
	where $\delta_1\eqsim 1$ satisfies
	\begin{equation}
	\label{eq:cond:del1}
	\frac{L_1}{2}-R - \frac{\delta_1}{\sqrt{1-\delta_1^2}}\left(\frac{L_2}{2}+R\right)\geq 0.
	\end{equation}
Figure~\ref{fig:integralpaths}(b) shows paths $P_f^{\delta_0}$ and $P_g^{\delta_1}$.
	\item[{\bf (c)}:] For $l=1,2$, let $P_l^{\delta_2}$ be the following path
	\[
	P_{l}^{\delta_2}: \delta_2k_l + \infty\bi \to \delta_2k_l\to \infty,
	\]
	where $\delta_2\eqsim 1$ satisfies the following two inequalities,
	\begin{align}
	\label{eq:cond:del2:a}
	\frac{L_1}{2}-R - \frac{\delta_2L_2}{\sqrt{1-\delta_2^2}}\geq 0,\quad d_1 - \frac{2\delta_2d_2}{\sqrt{1-\delta_2^2}}\geq 0.
	\end{align}
	They in fact imply
	\begin{align}
	\label{eq:cond:del2}
	2M_1 -\frac{L_1}{2}-R - 2\frac{\delta_2M_2}{\sqrt{1-\delta_2^2}}&\geq d_1.
	\end{align}
  Path $P_l^{\delta_2}$ is similar to $P_g^{\delta_1}$.
	\item[{\bf (d)}:] For $l=1,2$, let $P_l^1$ be the following path
	\begin{equation}
	\label{eq:path1}
	P^1_l=\{\xi: +\infty\bi\to 0 \to \sqrt{1-\varepsilon_1^2}k_l\}\cup Q_l^1,
	\end{equation}
	where 
	\[
	Q_l^1: \{\xi=\sqrt{k_l^2-\mu_l^2}: \mu_l\in \varepsilon_1k_l\to \varepsilon_1k_l+\infty\bi\},
	\]
	and $\varepsilon_1\eqsim 1$ satisfies the following inequalities
	\begin{equation}
	\label{eq:cond:eps1}
	\frac{L_2}{2}-R - (\frac{L_1}{2}+R)\frac{\varepsilon_1}{\sqrt{1-\varepsilon_1^2}}\geq 0.
	\end{equation}
  Path $P_l^1$ is similar to $P_l^0$.
\end{itemize}
The following lemmas are concerned with the exponential convergence for terms in $G_{\rm res}(x,y)$.
\begin{mylemma}\label{lemA45}
	We have that 
	\begin{align}
	\label{eq:int:0fij:est}
    \left|\int_{\rm EXT}\frac{e^{\bi\xi a_0^{x_1,y_1}}f_{x_2,y_2}^{i,j}(\xi)}{A}d\xi\right|\lesssim & e^{-\min(\sqrt{2},2\varepsilon_0)k_1\bar{\sigma}_2},\\
	\label{eq:int:grad0fij:est}
    \left|\nabla_x\int_{\rm EXT}\frac{e^{\bi\xi a_0^{x_1,y_1}}f_{x_2,y_2}^{i,j}(\xi)}{A}d\xi\right| \lesssim& k_2e^{-\min(\sqrt{2},2\varepsilon_0)k_1\bar{\sigma}_2}.
	\end{align}
	\begin{proof}
		For $\xi=\xi_1 - \bi \xi_2\in
		Q_l^0$ with $\xi_1,\xi_2\geq 0$, we obtain
		$(\xi_1^2 + p_l^2)\left( 1-\frac{q_l^2}{\xi_1^2} \right) = k_l^2,
		\xi_1\xi_2 = p_lq_l$,
		so that
		$\xi_1 \geq \sqrt{k_l^2 - p_l^2} = \sqrt{1-\varepsilon_0^2}k_l,\quad \xi_2 =
		\frac{p_lq_l}{\xi_1}\leq \frac{\varepsilon_0q_l}{\sqrt{1-\varepsilon_0^2}}$.
		We directly verify that $\min(|\mu_1|,|\mu_2|)\geq \varepsilon_0 k_1$,
		so that Lemmas \ref{thmA41} and \ref{lemA42} are applicable.
		
		Thus,
		\begin{align*}
		\left|\frac{e^{\bi\xi a_0^{x_1,y_1}}f_{x_2,y_2}^{i,j,l}(\xi)e^{\bi\mu_l\tilde{M}_2}}{A}\right|
		&\lesssim \frac{ 1 }{|\sqrt{k_i^2-\xi^2}|} e^{-q_lM_2-q_ld_2-2\varepsilon_0k_l\bar{\sigma}_2+ \xi_2L_1/2}\\
		&\lesssim \frac{ 1 }{|\sqrt{k_i^2-\xi^2}|} e^{-2q_ld_2-2\varepsilon_0k_l\bar{\sigma}_2 - q_l(L_2-\varepsilon_0L_1/\sqrt{1-\varepsilon_0^2})/2}\\
		&\lesssim \frac{ 1 }{|\sqrt{k_i^2-\xi^2}|} e^{-2q_ld_2-2\varepsilon_0k_l\bar{\sigma}_2 - q_lL_2/4},
		\end{align*}
		where we have used the condition of the choice of $\varepsilon_0$.
		By Cauchy's theorem and by Lemma~\ref{lemA42}, it is easy to derive that 
		\begin{align*}
		&\left|  \int_{\rm EXT} \frac{e^{\bi\xi a_0^{x_1,y_1}}f_{x_2,y_2}^{i,j}(\xi)}{A}d\xi\right|
		=  \Bigg|\sum_{l=1}^2\int_{P_l^0}\frac{e^{\bi\xi a_0^{x_1,y_1}}f_{x_2,y_2}^{i,j;l}(\xi)}{A}e^{\bi\mu_l\tilde{M}_2}d\xi\Bigg| \\
		\lesssim & \sum_{l=1}^2 \frac{e^{-\sqrt{2}k_l\bar{\sigma}_2}}{k_i\bar{\sigma}_2}  + \sqrt{\frac{k_2}{k_1}}e^{-2\varepsilon_0 k_l\bar{\sigma}_2}+\frac{e^{-2\varepsilon_0k_l\bar{\sigma}_2}}{\varepsilon_0k_1\sqrt{1-\varepsilon_0^2}k_l(2d_2+L_2/4)}\left(\varepsilon_0k_l + \frac{1}{2d_2+L_2/4}  \right).
		\end{align*}
		One similarly gets the estimate for the gradient of the integral; we omit the details here.
	\end{proof}
\end{mylemma}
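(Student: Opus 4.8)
The plan is to work from the decomposition $f_{x_2,y_2}^{i,j}(\xi)=\sum_{l=1}^{2}f_{x_2,y_2}^{i,j;l}(\xi)\,e^{\bi\mu_l\tilde{M}_2}$ and, for each $l$, to deform the contour ${\rm EXT}$ onto the path $P_l^0$ of \eqref{eq:path0}. A preliminary observation is that, since $x\in B_{\rm in}$, $y\in D$, and $\sigma_1\equiv 0$ on $[-L_1/2,L_1/2]$, the $n=0$ phase $a_0^{x_1,y_1}=(\tilde{x}_1-\tilde{y}_1)^+$ is simply the real number $|x_1-y_1|$, with $|a_0^{x_1,y_1}|\lesssim L_1$; hence $|e^{\bi\xi a_0^{x_1,y_1}}|\le e^{|a_0^{x_1,y_1}|\,\xi_2}$ for $\xi=\xi_1-\bi\xi_2$. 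I would then justify, exactly as in Lemma~\ref{lemgreen}(3), the contour deformation $\int_{\rm EXT}e^{\bi\xi a_0^{x_1,y_1}}f_{x_2,y_2}^{i,j;l}(\xi)\,e^{\bi\mu_l\tilde{M}_2}/A\,d\xi=\int_{P_l^0}(\cdots)\,d\xi$ for each $l$: in the region swept between the two contours the integrand is holomorphic because $A$ has no zeros off $\{\pm k_1,\pm k_2\}$ in $\overline{\mathbb{C}^{+-}\cup\mathbb{C}^{-+}}$ by Proposition~\ref{PropA}, the arc $Q_l^0$ (on which ${\rm Re}(\mu_l)=\varepsilon_0 k_l$ and $\min(|\mu_1|,|\mu_2|)\ge\varepsilon_0 k_1$) is designed precisely to steer around the branch point $k_l$ and the zero of $A$ there, each summand carries only an integrable $|\xi-k_i|^{-1/2}$-type singularity on the real axis, and the arc at infinity is discarded using the exponential decay of $e^{\bi\mu_l\tilde{M}_2}$ there.

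The core of the argument is the pointwise control of the $l$-th integrand on the three pieces of $P_l^0$. On $Q_l^0$, writing $\mu_l=p_l+\bi q_l$ with $p_l=\varepsilon_0 k_l$, the identities $\xi_1\xi_2=p_l q_l$ and $(\xi_1^2+p_l^2)(1-q_l^2/\xi_1^2)=k_l^2$ yield $\xi_1\ge\sqrt{1-\varepsilon_0^2}\,k_l$, hence $\xi_2\le\varepsilon_0 q_l/\sqrt{1-\varepsilon_0^2}$. Feeding this, together with the Lemma~\ref{lemA42} bound $|f_{x_2,y_2}^{i,j;l}(\xi)|\lesssim|e^{\bi\mu_l(d_2+\bi\bar{\sigma}_2)}A|/|\mu_i|$, the identity $|e^{\bi\mu_l\tilde{M}_2}|=e^{-q_l M_2-\varepsilon_0 k_l\bar{\sigma}_2}$, and $M_2=L_2/2+d_2$, into $|e^{\bi\xi a_0^{x_1,y_1}}|\le e^{|a_0^{x_1,y_1}|\xi_2}$ bounds the $l$-th integrand on $Q_l^0$ by $|\mu_i|^{-1}\,e^{-2\varepsilon_0 k_l\bar{\sigma}_2}\,e^{-2q_l d_2}e^{-\beta q_l L_2}$, where the first inequality in \eqref{eq:cond:eps0} is exactly what keeps the constant $\beta$ bounded below by a fixed positive number; thus the growth $e^{|a_0^{x_1,y_1}|\xi_2}$ is absorbed and the $q_l$-integral converges (the $|\mu_i|^{-1}$ being at worst an integrable square-root singularity). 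On the real segment $[0,\sqrt{1-\varepsilon_0^2}\,k_l]$ one has $\mu_l\ge\varepsilon_0 k_l$, again producing $e^{-2\varepsilon_0 k_l\bar{\sigma}_2}$; on the imaginary half-axis $\mu_l=\sqrt{k_l^2+t^2}$ is real, and the elementary bound $\sqrt{k_l^2+t^2}\ge(k_l+t)/\sqrt{2}$ gives, after integrating in $t$, a factor $e^{-\sqrt{2}k_l\bar{\sigma}_2}$ together with a harmless $(k_i\bar{\sigma}_2)^{-1}$. Taking the worst $l$, using $k_l\ge k_1$, $\varepsilon_0<1$, that $\kappa=k_2/k_1$ is fixed, and the standing bounds $d_j,L_j\gtrsim k_1^{-1}$ to absorb the algebraic prefactors, the two summands combine to \eqref{eq:int:0fij:est}.

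For the gradient estimate \eqref{eq:int:grad0fij:est} I would differentiate under the integral sign: $\partial_{x_1}$ brings down a factor $\bi\xi$, and $\partial_{x_2}$ acts on $f_{x_2,y_2}^{i,j;l}$ through $\tilde{x}_2$ (with $\partial_{x_2}\tilde{x}_2=1$ on $B_{\rm in}$), for which Lemma~\ref{lemA42} supplies the sharper bound $|\partial_{x_2}f_{x_2,y_2}^{i,j;l}(\xi)|\lesssim|e^{\bi\mu_l(d_2+\bi\bar{\sigma}_2)}A|$, without the $|\mu_i|^{-1}$. Rerunning the same contour estimate, the extra factor $|\xi|$ along $P_l^0$ is controlled by one additional power coming from $k_2$ together with a little of the exponential decay already present, which accounts for the single $k_2$ in front of the exponential.

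The step I expect to be the main obstacle is the exponent bookkeeping on $Q_l^0$: one must verify that the growth $e^{|a_0^{x_1,y_1}|\xi_2}$ produced by the kernel $e^{\bi\xi a_0^{x_1,y_1}}$ is strictly dominated by the $q_l$-decay $e^{-q_l(M_2+d_2)}$ coming from $e^{\bi\mu_l\tilde{M}_2}$ and the Lemma~\ref{lemA42} bound. This is precisely what forces the structural constraint \eqref{eq:cond:eps0}, makes the choice $\varepsilon_0\eqsim 1$ admissible, and is where the ratio $L_1/L_2\eqsim 1$ from \eqref{eq:assump2} is used; tuning the constants in \eqref{eq:cond:eps0} so that the resulting bound is uniform in $\bar{\sigma}_2$ and $d_2$ (and not merely finite) is the only genuinely delicate point, the remaining pieces being routine contour-and-integration estimates of the kind already carried out in Section~3.
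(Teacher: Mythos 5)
Your proposal follows essentially the same route as the paper: decompose $f_{x_2,y_2}^{i,j}=\sum_l f_{x_2,y_2}^{i,j;l}e^{\bi\mu_l\tilde{M}_2}$, deform ${\rm EXT}$ to $P_l^0$ via Cauchy's theorem, control the $Q_l^0$ arc by combining the Lemma~\ref{lemA42} bound with $|e^{\bi\mu_l\tilde{M}_2}|=e^{-\varepsilon_0 k_l\bar\sigma_2-q_lM_2}$ and the geometric relation $\xi_2\le\varepsilon_0 q_l/\sqrt{1-\varepsilon_0^2}$ so that the first condition in \eqref{eq:cond:eps0} absorbs the $e^{\xi_2 L_1/2}$ growth, handle the real segment via $\mu_l\ge\varepsilon_0 k_l$ and the imaginary axis via $\sqrt{k_l^2+t^2}\ge(k_l+t)/\sqrt2$, and differentiate under the integral for the gradient bound. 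The only differences are cosmetic (you spell out the holomorphy justification for the deformation and note explicitly that $a_0^{x_1,y_1}$ is real), so the argument is a faithful match to the paper's.
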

\begin{mylemma}
	Let $n$ be a nonzero integer. Then,
	\begin{align}
	\label{eq:fij:est}
	\left|e^{i\xi a_{n}^{x_1,y_1}}\frac{f_{x_2,y_2}^{i,j}(\xi)}{A}\right|\lesssim &  \frac{k_2}{k_1|\mu_1+\mu_2|}e^{-( 2|n|M_1-L_1/2-R )\xi_2 - 2|n|\bar{\sigma}_1\xi_1 -\frac{\sqrt{2}}{2}k_1\bar{\sigma}_2+\delta_0}, i,j=1,2
	\end{align}
	for all $\xi\in B_{\delta_0}$. It also holds
	\begin{equation}
	\label{eq:gij:est}
	\left| e^{\bi \xi a_n^{x_1,y_1}}g_{x_2,y_2}^{i,j}(\xi) \right|\leq \frac{2}{|\mu_1+\mu_2|} e^{-( (2|n|-2)M_1 + 2d_2)\xi_2 - 2|n|\bar{\sigma}_1\xi_1}, i,j=1,2,
	\end{equation}
  for all $\xi\in L_{\delta_1}$, where $L_{\delta_1}=\{\xi=\xi_1+\bi\xi_2: \xi_1=\delta_1k_1,
	\xi_2\geq 0\}$ with $\delta_1$ defined in (\ref{eq:cond:del1}).

	\begin{proof}
		We here prove case $i=j=1$ only and the others can be estimated similarly. We
		prove (\ref{eq:fij:est}) first. Since for any $\xi\in B_{\delta_0}$, Lemma \ref{LemImmu} 
		indicates that $|e^{\bi \mu_l |x_2|}|<e^{\delta_0}, |e^{\bi \mu_l \tilde{M}_2}|\leq e^{-\frac{\sqrt{2}}{2}k_1\bar{\sigma}_2+\delta_0}$.
    Furthermore, one easily gets that
		\[
		\left| \frac{\mu_2}{\mu_1} \right| \leq \frac{\sqrt{k_2^2+\xi_1^2 +
				\xi_2^2}}{\sqrt{k_1^2-\xi_1^2}}\lesssim \frac{k_2}{k_1}.
		\]
		Thus, one verifies that 
		\[
		|f_{x_2,y_2}^{1,1}(\xi)|\leq \left( 8\left|\frac{\mu_2}{\mu_1}\right|
		+20\right)e^{-\sqrt{k_1^2-\xi_1^2}\bar{\sigma}_2+\delta_0}\lesssim
		\frac{k_2}{k_1}e^{-\frac{\sqrt{2}}{2}k_1\bar{\sigma}_2+\delta_0},
		\]
		Since for $n>0$,
		${\rm Re}(a_n^{x_1,y_1})\geq 2|n|M_1 - L_1/2-R,\quad {\rm Im}(a_n^{x_1,y_1}) = 2|n|\bar{\sigma}_1$,
		we have
		\begin{align*}
		\left|e^{i\xi a_{n}^{x_1,y_1}}\frac{f_{x_2,y_2}^{1,1}(\xi)}{A}\right|\lesssim&  \frac{k_2}{k_1|\mu_1+\mu_2|}e^{-( 2|n|M_1-L_1/2-R )\xi_2 - 2|n|\bar{\sigma}_1\xi_1 -\frac{\sqrt{2}}{2}k_1\bar{\sigma}_2+\delta_0}.
		\end{align*}
		
		Now suppose $\xi\in L_{\delta_1}$, then one similarly obtains
		$|g_{x_2,y_2}^{1,1}(\xi)|\leq \frac{2}{|\mu_1+\mu_2|}e^{(L_2/2+R)q_1}$,
		so that 
		\begin{align*}
      \left|e^{i\xi a_{n}^{x_1,y_1}}g_{x_2,y_2}^{1,1}(\xi)\right|
      \leq&\frac{2}{|\mu_1+\mu_2|} e^{-( 2|n|M_1-L_1/2-R )\xi_2 - 2|n|\bar{\sigma}_1\xi_1 +(L_2/2+R)\delta_1\xi_2/\sqrt{1-\delta_1^2}}\\ 
		\leq&\frac{2}{|\mu_1+\mu_2|} e^{-( (2|n|-2)M_1 + 2d_2)\xi_2 - 2|n|\bar{\sigma}_1\xi_1}.
		\end{align*}
	\end{proof}
\end{mylemma}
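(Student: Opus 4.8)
The plan is to read both bounds off the closed-form formulas for $f_{x_2,y_2}^{i,j}$ and $g_{x_2,y_2}^{i,j}$ together with the uniform estimates already established for $A$ and for the vertical exponentials; by the symmetries $\xi\mapsto-\xi$ and $1\leftrightarrow 2$ it suffices to treat $i=j=1$. The factor common to both estimates is $e^{\bi\xi a_n^{x_1,y_1}}$: since $x\in B_{\rm in}$ and $y\in D$ lie in the PML-free strip, $\tilde x_1=x_1$, $\tilde y_1=y_1$ with $|x_1|\le L_1/2$, $|y_1|\le R$, so the definition of $a_n^{x_1,y_1}$ yields ${\rm Re}(a_n^{x_1,y_1})\ge 2|n|M_1-L_1/2-R\ge0$ and ${\rm Im}(a_n^{x_1,y_1})=2|n|\bar\sigma_1$; hence for $\xi=\xi_1+\bi\xi_2$ with $\xi_1,\xi_2\ge0$ one has the ``round-trip'' bound $|e^{\bi\xi a_n^{x_1,y_1}}|\le e^{-2|n|\bar\sigma_1\xi_1-(2|n|M_1-L_1/2-R)\xi_2}$.

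For (\ref{eq:fij:est}) I would first exploit the geometry of $B_{\delta_0}$: there $\xi_1\le\tfrac{\sqrt2}{2}k_1$, so $|\mu_1|\ge\sqrt{k_1^2-\xi_1^2}\gtrsim k_1$, $|\mu_1+\mu_2|\gtrsim k_1$ and $|\mu_2/\mu_1|\lesssim\kappa$, while Lemma~\ref{LemImmu} gives, uniformly in $\bar\sigma_2$ and $M_2$, $|e^{\bi\mu_l\tilde M_2}|\le e^{-\frac{\sqrt2}{2}k_1\bar\sigma_2+\delta_0}$ and $|e^{\bi\mu_l x_2}|,|e^{\bi\mu_l y_2}|\le e^{\delta_0}$ (using $|x_2|\le L_2/2\le M_2$, $|y_2|\le R$). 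Substituting these into (\ref{eq:f:++}) — every term of which carries at least one factor $e^{\bi\mu_l\tilde M_2}$, the coefficients $B_1^1,B_2^1$ being at most polynomially large in $k_2$ up to $\kappa$-dependent constants — yields $|f_{x_2,y_2}^{1,1}(\xi)|\lesssim\tfrac{k_2}{k_1}e^{-\frac{\sqrt2}{2}k_1\bar\sigma_2+\delta_0}$. Dividing by $|A|$ via the lower bound $|\mu_1+\mu_2|\lesssim|A|$ on $B_{\delta_0}$ from Lemma~\ref{lemABdelta0}, and multiplying by the round-trip bound above, produces exactly (\ref{eq:fij:est}); the pairs $(i,j)\ne(1,1)$ are handled identically with (\ref{eq:f:-+}) in place of (\ref{eq:f:++}).

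For (\ref{eq:gij:est}) the work is lighter, since $g_{x_2,y_2}^{1,1}(\xi)=\tfrac{2e^{\bi\mu_1(\tilde x_2^++\tilde y_2^+)}}{\mu_1+\mu_2}$ contains no $1/A$ and only the single exponential has to be controlled on the line $L_{\delta_1}=\{\xi_1=\delta_1k_1\}$. A direct computation there gives $\mu_l=p_l-\bi q_l$ with $p_l\ge k_l\sqrt{1-\delta_1^2}$ and $p_lq_l=\delta_1k_l\xi_2$, hence $q_l\le\tfrac{\delta_1}{\sqrt{1-\delta_1^2}}\xi_2$; since $\tilde x_2^+=|x_2|\le L_2/2$ and $\tilde y_2^+=|y_2|\le R$ this gives $|e^{\bi\mu_1(\tilde x_2^++\tilde y_2^+)}|\le e^{q_1(L_2/2+R)}\le e^{\frac{\delta_1(L_2/2+R)}{\sqrt{1-\delta_1^2}}\xi_2}$, and the inequality (\ref{eq:cond:del1}) defining $\delta_1$ is precisely what forces this exponent to be $\le(L_1/2-R)\xi_2$. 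Combining with the round-trip bound and using $2M_1=L_1+2d_1$ (together with $d_1\eqsim d_2$) then absorbs the growing factor into the $\xi_2$-decay, leaving (\ref{eq:gij:est}).

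The genuinely hard analysis has already been discharged upstream — Lemma~\ref{lemABdelta0} (non-vanishing of $A$ and $|\mu_1+\mu_2|\lesssim|A|$ on $B_{\delta_0}$, \emph{uniformly} in $\bar\sigma_2$ and $M_2$) and Lemma~\ref{LemImmu} (the sign and size of ${\rm Im}(\mu_l\tilde M_2)$). What remains is essentially bookkeeping, and the two points that repay care are: (a) verifying that no summand of $f_{x_2,y_2}^{i,j}$ degrades the $\bar\sigma_2$-decay, i.e. that each term in (\ref{eq:f:++})/(\ref{eq:f:-+}) really carries a factor $e^{\bi\mu_l\tilde M_2}$ and that its prefactor grows at most polynomially in $k_2$, so that the decay in (\ref{eq:fij:est}) is honest and uniform in the PML parameters; and (b) on $L_{\delta_1}$, checking that the horizontal source–target separation $L_1/2+R$ falls short of the shortest round-trip length $2M_1$ by the margin needed to dominate $e^{q_1(L_2/2+R)}$ — which is exactly what (\ref{eq:cond:del1}) and $M_1=L_1/2+d_1$ together encode.
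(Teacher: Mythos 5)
Your proposal is correct and follows essentially the same route as the paper's proof: for \eqref{eq:fij:est} you bound the vertical exponentials through Lemma~\ref{LemImmu}, control $|\mu_2/\mu_1|$ and $|\mu_1+\mu_2|$ from the geometry of $B_{\delta_0}$, divide by $|A|$ via Lemma~\ref{lemABdelta0}, and multiply by the round-trip exponential, which is exactly the paper's chain of estimates; for \eqref{eq:gij:est} you compute $q_l$ on the vertical line $L_{\delta_1}$ and invoke \eqref{eq:cond:del1} plus $M_1=L_1/2+d_1$ to absorb the growing factor $e^{q_1(L_2/2+R)}$ into the $\xi_2$-decay, again mirroring the paper. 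The only place where you are slightly vaguer than the paper is the last line of the $g$-estimate: applying \eqref{eq:cond:del1} together with $2M_1=L_1+2d_1$ yields the exponent coefficient $(2|n|-2)M_1+2d_1$, whereas the stated bound has $2d_2$; appealing to $d_1\eqsim d_2$ only gives the $\lesssim$ version (which is all that downstream Lemma~\ref{lemA48} actually uses), so this is a genuine but harmless slack that the paper itself glosses over.
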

The next lemma follows naturally from the lemma above.
\begin{mylemma}\label{lemA47}
	For any nonzero integer $n$,
	\begin{align}
	\label{eq:int:nfij:est}
	\left|\int_{\rm EXT}e^{\bi\xi a_n^{x_1,y_1}}\frac{f_{x_2,y_2}^{i,j}(\xi)}{A}d\xi\right|&\lesssim e^{-\sqrt{2}\bar{\sigma}_2k_1/2-\gamma_0|n|},\\
	\label{eq:int:gradnfij:est}
	\left|\nabla_{x}\int_{\rm EXT}e^{\bi\xi a_n^{x_1,y_1}}\frac{f_{x_2,y_2}^{i,j}(\xi)}{A}d\xi\right|&\lesssim k_2e^{-\sqrt{2}\bar{\sigma}_2k_1/2-\gamma_0|n|},
	\end{align}
  where
  $\gamma_0=\min((2d_1+L_1/2-R)\delta_0/M_2,k_1\bar{\sigma}_1/\sqrt{2},k_1\bar{\sigma}_2/\sqrt{2})\eqsim
  1$.
	\begin{proof}
		We prove (\ref{eq:int:nfij:est}) first.     Then, one verifes by Cauchy's theorem that 
		\begin{align*}
		\int_{\rm EXT}e^{\bi\xi a_n^{x_1,y_1}}\frac{f_{x_2,y_2}^{i,j}(\xi)}{A}d\xi&=\int_{P_f^{\delta_0}} e^{\bi\xi a_n^{x_1,y_1}}\frac{f_{x_2,y_2}^{i,j}(\xi)}{A}d\xi\\
		&=\left(  \int_{+\infty\bi}^{\frac{\delta_0}{M_2}\bi} + \int_{\frac{\delta_0}{M_2}\bi}^{\frac{\delta_0}{M_2}\bi+\frac{k_1}{\sqrt{2}}} + \int_{\frac{\delta_0}{M_2}\bi + \frac{k_1}{\sqrt{2}}}^{\frac{k_1}{\sqrt{2}}} + \int_{\frac{k_1}{\sqrt{2}}}^{\infty}\right)e^{\bi\xi a_n^{x_1,y_1}}\frac{f_{x_2,y_2}^{i,j}(\xi)}{A}d\xi\\  
		&=:I_1 + I_2 + I_3 + I_4 ,
		\end{align*}
		where we have 
		\begin{align*}
		|I_1|\lesssim& \sum_{l=1}^2\int_{\frac{\delta_0}{M_2}}^{\infty}\frac{1}{\sqrt{k_i^2+\xi_2^2}}e^{-\xi_2 (2|n|M_1 - L_1/2-R) - 2\sqrt{k_l^2+\xi_2^2}\bar{\sigma}_2}d\xi_2 \lesssim\sum_{l=1}^2 \frac{e^{-2\bar{\sigma}_2 k_l-\delta_0/M_2(2|n|M_1-L_1/2-R)}}{k_i(2|n|M_1-L_1/2-R)},\\
		|I_2|\lesssim& \frac{k_1}{\sqrt{(k_2^2-k_1^2)}}e^{-(2|n|M_1-L_1/2-R)\delta_0/M_2 - k_1\bar{\sigma}_2/\sqrt{2}+\delta_0},\\
		|I_3|\lesssim& \frac{\delta_0}{M_2\sqrt{(k_2^2-k_1^2)}}e^{-\sqrt{2}|n|k_1\bar{\sigma}_1 - k_1\bar{\sigma}_2/\sqrt{2}+\delta_0},\\
		|I_4|\lesssim& \sum_{l=1}^2\int_{\frac{k_1}{\sqrt{2}}}^{\infty} \frac{|e^{\bi\sqrt{k_l^2-\xi^2}(M_2+d_2+2\bi \bar{\sigma}_2)}|}{\sqrt{|k_i^2-\xi^2|}}e^{-2|n|\bar{\sigma}_2\xi}d\xi\lesssim \frac{e^{-\sqrt{2}|n|\bar{\sigma}_2k_1}}{\sqrt{k_1}}\left( \sqrt{k_2} + \frac{1}{(M_2+d_2)\sqrt{k_2}} \right).
		\end{align*}
    Combining the above four estimates, we get (\ref{eq:int:nfij:est})
    immediately. One can similarly estimate the gradients $\nabla I_j$ for
    $j=1,2,3,4$ to get the estimate (\ref{eq:int:gradnfij:est}); we omit the details.
	\end{proof}
\end{mylemma}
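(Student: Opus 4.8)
The plan is to prove \eqref{eq:int:nfij:est} by a single contour deformation that exposes the exponential decay, and then to deduce \eqref{eq:int:gradnfij:est} by running the same argument with the $x$-derivative moved inside the integral. First I would observe that, by Lemma~\ref{lemABdelta0}, $A$ is bounded away from zero on the closed box $\overline{B_{\delta_0}}$, on which $A$ and $f^{i,j}_{x_2,y_2}$ are also holomorphic in $\xi$; since the region enclosed between the contour $\mathrm{EXT}$ and the contour $P_f^{\delta_0}$ is exactly $B_{\delta_0}$ (the two contours already coincide along the positive imaginary axis above $\tfrac{\delta_0}{M_2}\bi$ and along the real axis beyond $\tfrac{\sqrt2}{2}k_1$, so the branch points $\xi=k_1,k_2$ lie on the common tail and never enter the deformed region), Cauchy's theorem yields $\int_{\mathrm{EXT}}=\int_{P_f^{\delta_0}}$. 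I would then split $P_f^{\delta_0}$ into the four pieces $I_1,I_2,I_3,I_4$: the ray down the imaginary axis from $+\infty\bi$ to $\tfrac{\delta_0}{M_2}\bi$; the horizontal segment at height $\tfrac{\delta_0}{M_2}$; the short vertical segment at $\mathrm{Re}\,\xi=\tfrac{\sqrt2}{2}k_1$; and the real ray from $\tfrac{\sqrt2}{2}k_1$ to $+\infty$.

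On $I_2$ and $I_3$ the variable stays in $B_{\delta_0}$, so I would simply plug in the pointwise bound \eqref{eq:fij:est} from the immediately preceding lemma, which already carries the factor $e^{-\frac{\sqrt2}{2}k_1\bar\sigma_2+\delta_0}$ together with $e^{-(2|n|M_1-L_1/2-R)\xi_2-2|n|\bar\sigma_1\xi_1}$; using $|\mu_1+\mu_2|\gtrsim\sqrt{k_2^2-k_1^2}\eqsim k_1$, $k_2/k_1\eqsim 1$, that each segment has length $\lesssim k_1$, that $\mathrm{Re}(a_n^{x_1,y_1})\ge 2|n|M_1-L_1/2-R\ge|n|(2d_1+L_1/2-R)$ with $(2d_1+L_1/2-R)\delta_0/M_2\ge\gamma_0$ and $\sqrt2\,|n|\bar\sigma_1k_1\ge|n|\gamma_0$, both pieces integrate to $\lesssim e^{-\frac{\sqrt2}{2}k_1\bar\sigma_2-\gamma_0|n|}$. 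On $I_1$ and $I_4$ I would instead use Lemma~\ref{lemA42}, which gives $|f^{i,j}_{x_2,y_2}/A|\lesssim\sum_l|e^{\bi\mu_l\tilde M_2}|\,|e^{\bi\mu_l(d_2+\bi\bar\sigma_2)}|/|\mu_i|$: on $I_1$ the variable is purely imaginary and $\mu_l=\sqrt{k_l^2+\xi_2^2}$ is real, so this product equals $e^{-2\mu_l\bar\sigma_2}\le e^{-2k_1\bar\sigma_2}$, which supplies the $\bar\sigma_2$-decay, while $e^{-(2|n|M_1-L_1/2-R)\xi_2}$ supplies the $|n|$-decay and makes the resulting integral of $(k_i^2+\xi_2^2)^{-1/2}$ converge; on $I_4$ the variable is real $\ge\tfrac{\sqrt2}{2}k_1$, the above product is $\le1$, the residual integral $\int_{\sqrt2 k_1/2}^{\infty}\!|e^{\bi\mu_l\tilde M_2}e^{\bi\mu_l(d_2+\bi\bar\sigma_2)}|\,|k_i^2-\xi^2|^{-1/2}\,d\xi$ is finite (integrable at $\xi=k_1,k_2$ and exponentially small for large $\xi$), and the $\bar\sigma_2$-decay together with the $|n|$-decay has to be extracted from the single factor $e^{-2|n|\bar\sigma_1\xi}\le e^{-\sqrt2\,|n|\bar\sigma_1k_1}$ by spending one unit of $|n|$ and invoking $\bar\sigma_1\eqsim\bar\sigma_2$. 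Summing $I_1+\dots+I_4$ and inserting the parameter relations \eqref{eq:assump1}--\eqref{eq:assump3} (so that $M_1\eqsim M_2$, $\bar\sigma_1\eqsim\bar\sigma_2$, $\kappa$ is fixed and every scale is $\gtrsim k_1^{-1}$) collapses the total to the stated $e^{-\sqrt2\bar\sigma_2k_1/2-\gamma_0|n|}$.

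For \eqref{eq:int:gradnfij:est} I would differentiate under the integral sign and re-run the identical estimates: $\partial_{x_1}$ brings down $\bi\xi\,\alpha_1^p(x_1)$ with $|\alpha_1^p|\lesssim1$, and, by the companion bound in Lemma~\ref{lemA42}, $\partial_{x_2}$ upgrades $|f^{i,j;l}_{x_2,y_2}/A|\lesssim|e^{\bi\mu_l(d_2+\bi\bar\sigma_2)}|/|\mu_i|$ to $|\partial_{x_2}f^{i,j;l}_{x_2,y_2}/A|\lesssim|e^{\bi\mu_l(d_2+\bi\bar\sigma_2)}|$, i.e. costs an extra factor $|\mu_i|\lesssim k_2+|\xi|$; on every piece the extra powers of $|\xi|$ are swallowed by the exponential factors already in play, leaving one clean $k_2$ out front. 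The step I expect to be the main obstacle is exactly this bookkeeping of the exponents on $I_4$: there the integrand is merely $O(|\xi-k_j|^{-1/2})$ near the branch points and carries no intrinsic $\bar\sigma_2$-factor, so the $\bar\sigma_2$-decay must be manufactured out of $e^{-2|n|\bar\sigma_1\xi}$ via $\bar\sigma_1\eqsim\bar\sigma_2$ and $|n|\ge1$, and matching the constant this produces against the genuine constant $\tfrac{\sqrt2}{2}$ coming from Lemma~\ref{LemImmu} on $I_1,I_2,I_3$ is precisely what fixes the form of $\gamma_0$ and calls for the hypotheses \eqref{eq:assump1}--\eqref{eq:assump3}.
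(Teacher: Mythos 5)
Your proposal follows the paper's own argument essentially step for step: the same Cauchy deformation from $\mathrm{EXT}$ onto $P_f^{\delta_0}$ (with the same observation that the two contours only differ inside $B_{\delta_0}$, so the branch points $\pm k_1,\pm k_2$ are never enclosed), the same four-way split $I_1,\dots,I_4$, bound \eqref{eq:fij:est} on $I_2,I_3$ where $\xi\in B_{\delta_0}$, and Lemma~\ref{lemA42} on $I_1,I_4$; the gradient estimate is obtained, as in the paper, by differentiating under the integral and using the companion bound $|\partial_{x_2}f^{i,j;l}/A|\lesssim|e^{\bi\mu_l(d_2+\bi\bar\sigma_2)}|$, paying a factor $|\mu_i|\lesssim k_2+|\xi|$. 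Your explicit accounting on $I_4$ — that the $f/A$ factor is only $O(1)$ near the branch points and the $\bar\sigma_2$-decay has to be manufactured from $e^{-2|n|\bar\sigma_1\xi}\le e^{-\sqrt2|n|\bar\sigma_1 k_1}$ by spending one unit of $|n|$ and invoking $\bar\sigma_1\eqsim\bar\sigma_2$ — is in fact a more careful rendering of the step the paper writes down compactly (the paper's displayed $I_4$ bound already has $\bar\sigma_2$ in place of $\bar\sigma_1$, i.e.\ it silently used \eqref{eq:assump2}); this is not a different route, just the same estimate with the bookkeeping made explicit.
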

Our next two lemmas are concerned with the properties for the non-zeroth order terms in $G_{\rm layer}$.
\begin{mylemma}\label{lemA48}
	For any nonzero integer $n$, it holds
	\begin{align}
	\label{eq:int:ngij:est}
    \left|\int_{\rm EXT}e^{\bi\xi a_n^{x_1,y_1}}g_{x_2,y_2}^{i,j}(\xi)d\xi\right|&\lesssim e^{-2|n|\delta_1 k_1\min(\bar{\sigma}_1,\bar{\sigma}_2)}\\
    \label{eq:int:gradngij:est}
	\left|\nabla_{x}\int_{\rm EXT}e^{\bi\xi a_n^{x_1,y_1}}g_{x_2,y_2}^{i,j}(\xi)d\xi\right|&\lesssim k_2e^{-2|n|\delta_1 k_1\min(\bar{\sigma}_1,\bar{\sigma}_2)}.
	\end{align}
	\begin{proof}
		We obtain by Cauchy's theorem that 
		\begin{align*}
		\int_{\rm EXT}e^{\bi\xi a_n^{x_1,y_1}}g_{x_2,y_2}^{i,j}(\xi)d\xi &=\int_{\rm P_g} e^{\bi\xi a_n^{x_1,y_1}}g_{x_2,y_2}^{i,j}(\xi)d\xi = \int_{\delta_1k_1+\infty\bi}^{\delta_1k_1} + \int_{\delta_1k_1}^{\infty}e^{\bi\xi a_n^{x_1,y_1}}g_{x_2,y_2}^{i,j}(\xi)d\xi=: I_1 + I_2.
		\end{align*}
		It holds
		\begin{align*}
		|I_1|\lesssim&\int_0^{\infty} \frac{e^{-( (2|n|-2)M_1 + 2d_2)\xi_2 - 2|n|\bar{\sigma}_1\delta_1k_1}d\xi_2}{\sqrt{k_2^2-k_1^2}}\lesssim\frac{e^{-2|n|\delta_1 k_1\bar{\sigma}_1}}{\sqrt{k_2^2-k_1^2}((|n|-1)M_1+d_2)} ,\\
		|I_2|\lesssim&\int_{\delta_1 k_1}^{\infty}\frac{e^{-2|n|\bar{\sigma}_2\xi}d\xi}{|\mu_1+\mu_2|} \lesssim\frac{e^{-2|n|\bar{\sigma}_2\delta_1k_1}}{\sqrt{k_2^2-k_1^2}|n|\bar{\sigma}_2}, 
		\end{align*}
    so that the inequality (\ref{eq:int:ngij:est}) follows immediately. One
    similarly gets estimates of $\nabla I_j$ for $j=1,2$ to arrive at
    (\ref{eq:int:gradngij:est}); we omit the details.
	\end{proof}
\end{mylemma}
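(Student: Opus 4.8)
The plan is to deform the integration contour and then reduce both estimates to the pointwise bounds already available, namely \eqref{eq:gij:est} together with the trivial bound $|g_{x_2,y_2}^{i,j}(\xi)|\le 2/|\mu_1+\mu_2|\le 2/\sqrt{k_2^2-k_1^2}$ on $\ol{\mCpp}$.

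First I would shift the path ${\rm EXT}:+\infty\bi\to 0\to+\infty$ to the path $P_g^{\delta_1}:\delta_1k_1+\infty\bi\to\delta_1k_1\to+\infty$ from item (b). This is legitimate by Cauchy's theorem: in the strip $\{0\le{\rm Re}(\xi)\le\delta_1k_1,\ {\rm Im}(\xi)\ge 0\}$ swept during the deformation the functions $\mu_l=\sqrt{k_l^2-\xi^2}$ are holomorphic, since their branch cuts lie on $\{\xi\in\mathbb{R}:|\xi|>k_l\}$ and $\delta_1<1$; the denominator $\mu_1+\mu_2$ never vanishes; and the horizontal segment at height ${\rm Im}(\xi)=T$ drops out as $T\to\infty$, because ${\rm Re}(a_n^{x_1,y_1})>0$ and ${\rm Im}(a_n^{x_1,y_1})>0$ force $|e^{\bi\xi a_n^{x_1,y_1}}|\to0$ there while $|g_{x_2,y_2}^{i,j}|$ stays bounded. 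I then write $\int_{P_g^{\delta_1}}=I_1+I_2$, with $I_1$ over the vertical segment $L_{\delta_1}=\{\xi_1=\delta_1k_1,\ \xi_2\ge 0\}$ and $I_2$ over the real ray $[\delta_1k_1,+\infty)$.

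On $L_{\delta_1}$, \eqref{eq:gij:est} gives $|e^{\bi\xi a_n^{x_1,y_1}}g_{x_2,y_2}^{i,j}(\xi)|\le\frac{2}{|\mu_1+\mu_2|}e^{-((2|n|-2)M_1+2d_2)\xi_2-2|n|\bar\sigma_1\delta_1k_1}$; since $d_2>0$, integrating in $\xi_2\in[0,\infty)$ yields $|I_1|\lesssim e^{-2|n|\bar\sigma_1\delta_1k_1}/(\sqrt{k_2^2-k_1^2}\,((2|n|-2)M_1+2d_2))$. On the real ray $\xi$ is real and, since $\tx_1=x_1$, $\ty_1=y_1$ in the physical domain, $|e^{\bi\xi a_n^{x_1,y_1}}|=e^{-\xi\,{\rm Im}(a_n^{x_1,y_1})}=e^{-2|n|\bar\sigma_1\xi}$; combined with $|g_{x_2,y_2}^{i,j}|\le 2/\sqrt{k_2^2-k_1^2}$ this gives $|I_2|\lesssim e^{-2|n|\bar\sigma_1\delta_1k_1}/(\sqrt{k_2^2-k_1^2}\,|n|\bar\sigma_1)$. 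Adding the two and invoking \eqref{eq:assump2}--\eqref{eq:assump3} (so that $\sqrt{k_2^2-k_1^2}\eqsim k_1$, $(2|n|-2)M_1+2d_2\gtrsim k_1^{-1}$, $|n|\bar\sigma_1\gtrsim k_1^{-1}$, and $\bar\sigma_1\gtrsim\min(\bar\sigma_1,\bar\sigma_2)$) collapses every prefactor to a constant and yields \eqref{eq:int:ngij:est}.

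For \eqref{eq:int:gradngij:est} I would differentiate under the integral sign, which the uniform exponential bounds above justify. Because $\sigma_1=\sigma_2=0$ on $B_{\rm in}$ and $D$ we have $\partial_{x_1}\tx_1=\partial_{x_2}\tx_2=1$, so $\partial_{x_1}$ applied to $e^{\bi\xi a_n^{x_1,y_1}}$ produces a factor of modulus $|\xi|$ and $\partial_{x_2}$ applied to $g_{x_2,y_2}^{i,j}$ produces a factor of modulus $|\mu_i|\le\sqrt{k_2^2+|\xi|^2}\lesssim k_2+|\xi|$. Repeating the contour split with the extra weight $k_2+|\xi|$ and using $\int_0^\infty(k_2+\xi_2)e^{-c\xi_2}\,d\xi_2\lesssim k_2/c+1/c^2$ with $c=(2|n|-2)M_1+2d_2\gtrsim k_1^{-1}$ (and the analogous estimate on the real ray, where $\xi\ge\delta_1k_1\eqsim k_1$), all the additional factors are absorbed into a single $k_2$, which gives \eqref{eq:int:gradngij:est}. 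I do not anticipate a genuine obstacle beyond bookkeeping; the only delicate points are the holomorphy/branch check for the deformation onto $P_g^{\delta_1}$, the use of $d_2>0$ so that the $\xi_2$-integral on $L_{\delta_1}$ already converges at $|n|=1$ (where $(2|n|-2)M_1=0$), and checking that \eqref{eq:assump1}--\eqref{eq:assump3} make every prefactor harmless.
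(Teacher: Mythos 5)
Your proposal is correct and follows essentially the same route as the paper: deform ${\rm EXT}$ to $P_g^{\delta_1}$ by Cauchy's theorem, split into the vertical piece $I_1$ (bounded via \eqref{eq:gij:est}) and the horizontal ray $I_2$, and differentiate under the integral for the gradient. One small discrepancy worth noting: for $I_2$ you extract $e^{-2|n|\bar\sigma_1\delta_1 k_1}$ (from $|e^{\bi\xi a_n^{x_1,y_1}}|=e^{-2|n|\bar\sigma_1\xi}$ on the real ray), whereas the paper's displayed bound shows $e^{-2|n|\bar\sigma_2\delta_1 k_1}$; your version is what the computation actually yields, and either one is $\le e^{-2|n|\delta_1 k_1\min(\bar\sigma_1,\bar\sigma_2)}$, so the conclusion holds regardless.
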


\begin{mylemma}\label{lemA49}
	Suppose $x\in B_{\rm in}^i$ and $y\in B_{\rm in}^i\cap D$, $i=1,2$. Then for any nonzero integer $n$,
	we have
	\begin{align}
	\label{eq:hankel:est:1}
	\left|H_0^{(1)}(k_l\sqrt{(a_n^{x_1,y_1})^2 + (b_j^{x_2,y_2})^2})\right|&\lesssim e^{-2|n|\delta_2k_1\bar{\sigma}_1},\\
	\label{eq:gradhankel:est:1}
	\left|\nabla_{x}H_0^{(1)}(k_l\sqrt{(a_n^{x_1,y_1})^2 + (b_j^{x_2,y_2})^2})\right|&\lesssim k_2e^{-2|n|\delta_2k_1\bar{\sigma}_1},
	\end{align}
	for $j=1,2,3$ and for $l=1,2$. In particular, we have
	\begin{align}
	\label{eq:hankel:est:2}
	\left|H_0^{(1)}(k_l\sqrt{(a_0^{x_1,y_1})^2 + (b_3^{x_2,y_2})^2})\right|\lesssim & e^{-\min(\sqrt{2},2\varepsilon_1)\bar{\sigma}_2k_i},\\
	\label{eq:gradhankel:est:2}
	\left|\nabla_{x}H_0^{(1)}(k_l\sqrt{(a_0^{x_1,y_1})^2 + (b_3^{x_2,y_2})^2})\right|\lesssim &k_2e^{-\min(\sqrt{2},2\varepsilon_1)\bar{\sigma}_2k_i}.
	\end{align}
	\begin{proof}
		When $x\in B_{\rm in}^i$ and $y\in B_{\rm in}^i\cap D$, we get for $n\in\mathbb{Z}\backslash\{0\}$ that
		\begin{align*}
		&{\rm Im}(a_{n}^{x_1,y_1}) =2|n|\bar{\sigma}_1,\quad
		{\rm Re}(a_{n}^{x_1,y_1}) \in [2|n|M_1-L_1/2-R,2|n|M_1+L_1/2+R],
		\end{align*}
		and  
		\begin{align*}
		&{\rm Im}(a_{0}^{x_1,y_1}) =0,\quad
		{\rm Re}(a_{0}^{x_1,y_1}) \in [0,L_1/2+R], \quad{\rm Re}(b_j^{x_2,y_2}) \in [0, L_2/2+R], \quad {\rm Im}(b_j^{x_2,y_2}) =0,\\
		&{\rm Re}(b_3^{x_2,y_2}) \in [2M_2-L_2/2-R, 2M_2], \quad {\rm Im}(b_3^{x_2,y_2}) =2\bar{\sigma}_2.
		\end{align*}
		With the above estimates, {\cite[Lemma 6.1]{chenzheng} and Lemma
      \ref{lemhankel}} can directly give rise to estimates of Hankel functions
    that are analogous to (\ref{eq:hankel:est:1}) and (\ref{eq:hankel:est:2}).
    We here give an alternative proof which benifits estimating gradients of
    Hankel functions.
		
		For $n\in\mathbb{Z}\backslash\{0\}$ and $j=1,2,3$, by Cauchy's theorem, we
    see that
		\begin{align*}
		H_0^{(1)}(k_l\sqrt{(a_n^{x_1,y_1})^2 + (b_j^{x_2,y_2})^2}) &=
		\frac{\bi}{4\pi}\int_{P_l^{\delta_2}} \frac{1}{\mu_l}e^{\bi
			a_n^{x_1,y_1}\xi + \bi\mu_l b_j^{x_2,y_2}}d\xi\\
		&= \frac{\bi}{4\pi}\left( \int_{\delta_2k_l+\infty\bi}^{\delta_2k_l}+\int_{\delta_2k_l}^{+\infty}  \right)\frac{1}{\mu_l}e^{\bi a_n^{x_1,y_1}\xi +
			\bi b_j^{x_2,y_2}\mu_l } d\xi =: I_1 + I_2.
		\end{align*}
		Thus, we have
		\begin{align*}
		|I_1|&\lesssim \int_{0}^{+\infty}
		\frac{e^{-2|n|\delta_2\bar{\sigma}_1k_l}}{\sqrt{1-\delta_2^2}k_l}
		e^{-[(2|n|M_1-L_1/2-R)  - 2\delta_2M_2/\sqrt{1-\delta_2^2}]\xi_2}d\xi_2\\
		&\leq \int_{0}^{+\infty}
		\frac{e^{-2|n|\delta_2\bar{\sigma}_1k_l}}{\sqrt{1-\delta_2^2}k_l}
		e^{-((2|n|-2)M_1 + d_1)\xi_2}d\xi_2 \leq \frac{e^{-2|n|\delta_2\bar{\sigma}_1k_l}}{\sqrt{1-\delta_2^2}k_l((2|n|-2)M_1+d_1)},
		\end{align*}
		and
		\begin{align*}
		|I_2|&\lesssim \int_{\delta_2k_l}^{k_l} \frac{e^{-2|n|\bar{\sigma}_1\xi}}{\sqrt{k_l^2-\xi^2}}d\xi + \int_{k_l}^{+\infty} \frac{e^{-2|n|\bar{\sigma}_1\xi}}{\sqrt{\xi^2-k_l^2}}d\xi \lesssim \sqrt{1-\delta_2}e^{-2|n|\delta_2\bar{\sigma}_1k_l} + e^{-2|n|\delta_2\bar{\sigma}_1k_l}(1 + \frac{1}{|n|\bar{\sigma}_1k_l}),
		\end{align*}
    so that the estimate (\ref{eq:hankel:est:1}) follows immediately. One
    similarly estimates $\nabla I_j$ for $j=1,2$, yielding the other estimate 
    (\ref{eq:gradhankel:est:1}); we omit the details.
	
		On the other hand, by Cauchy's theorem, we get
		\begin{align*}
		&H_0^{(1)}(k_l\sqrt{(a_0^{x_1,y_1})^2 + (b_3^{x_2,y_2})^2}) =
		\frac{\bi}{4\pi}\int_{P_l^{\varepsilon_1}} \frac{1}{\mu_l} e^{\bi a_0^{x_1,y_1}\xi + \bi b_3^{x_2,y_2}\mu_i}d\xi\\
		=&\frac{\bi}{4\pi}\left( \int_{+\infty\bi }^0 + \int_0^{\sqrt{1-\varepsilon_1^2}k_l} \right) \frac{1}{\mu_l} e^{\bi a_0^{x_1,y_1}\xi + \bi b_3^{x_2,y_2}\mu_l}d\xi +\frac{\bi}{4\pi}\int_{\varepsilon_1k_l}^{\varepsilon_1k_l+\infty\bi}\frac{1}{\xi} e^{\bi a_0^{x_1,y_1}\xi + \bi b_3^{x_2,y_2}\mu_l}d\mu_l,\\
		=:& J_1+J_2+J_3.
		\end{align*}
		We have the following estimates
		\begin{align*}
		|J_1|&\lesssim \frac{1}{k_l}\int_{0}^{+\infty}e^{-2\bar{\sigma}_2\sqrt{k_l^2+\xi_2^2}}d\xi_2 \lesssim \frac{e^{-\sqrt{2}\bar{\sigma}_2 k_l}}{k_l\bar{\sigma}_2},\\
		|J_2|&\lesssim \frac{1}{\varepsilon_1k_l}\int_{0}^{\sqrt{1-\varepsilon_1^2}k_l} e^{-2\bar{\sigma}_2 \sqrt{k_l^2-\xi^2}}d\xi\lesssim \frac{e^{-2 \varepsilon_1\bar{\sigma}_2k_l}}{\varepsilon_1},\\
		|J_3|&\lesssim \frac{1}{\sqrt{1-\varepsilon_1^2}k_l}\int_{0}^{+\infty} e^{-[(2M_2-L_2/2-R) - (L_1/2+R)\varepsilon_1 /\sqrt{1-\varepsilon_1^2}] t- 2\bar{\sigma}_2 \varepsilon_1k_l }dt \\
		&\lesssim \frac{1}{\sqrt{1-\varepsilon_1^2}k_l}\int_0^{+\infty} e^{-2d_2t - 2\bar{\sigma}_2\varepsilon_1k_l} dt \lesssim \frac{1}{\sqrt{1-\varepsilon_1^2}k_ld_2}e^{- 2\bar{\sigma}_2\varepsilon_1k_l},
		\end{align*}
    so that the estimate (\ref{eq:hankel:est:2}) follows immediately. Using the
    same arguments to estimate $\nabla J_j$ for $j=1,2,3$, yields the other 
    estimates (\ref{eq:gradhankel:est:2}); we omit the details.
	\end{proof}
\end{mylemma}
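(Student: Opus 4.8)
The plan is to bound each Hankel term in the series (\ref{eq:infseries:G:1}) through the Sommerfeld integral representation
\[
  H_0^{(1)}\big(k_l\sqrt{a^2+b^2}\big)=\frac{1}{\pi}\int_{-\infty}^{+\infty}\frac{e^{\bi a\xi+\bi\mu_l b}}{\mu_l}\,d\xi
\]
(the one behind the formula for $\Phi$ in Section~3), applied with $a=a_n^{x_1,y_1}$ and $b=b_j^{x_2,y_2}$, and then to deform the real $\xi$--contour into the upper half plane so that the oscillatory factor $e^{\bi a\xi}$ turns into a decaying one. The structural fact that makes this work is that for $x\in B_{\rm in}^i$ and $y\in B_{\rm in}^i\cap D$ the absorbing functions vanish there, so $\tx_1,\ty_1,\tx_2,\ty_2$ are real; hence, when $n\neq0$, ${\rm Im}(a_n^{x_1,y_1})=2|n|\bar\sigma_1$ exactly and ${\rm Re}(a_n^{x_1,y_1})\ge 2|n|M_1-L_1/2-R$, while $b_1,b_2$ are real with modulus $\le L_2/2+R$ and $b_3=2M_2-|x_2+y_2|+2\bi\bar\sigma_2$, so that ${\rm Im}(b_3)=2\bar\sigma_2$ and ${\rm Re}(b_3)\ge 2M_2-L_2/2-R$.

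For a nonzero integer $n$ I would deform the real axis onto the path $P_l^{\delta_2}$; this is legitimate since $1/\mu_l$ has only the integrable $|\xi\mp k_l|^{-1/2}$ branch singularity and the arc at infinity is killed by $e^{\bi a_n\xi}$. On the horizontal piece $\xi\in[\delta_2k_l,\infty)$ we have $|e^{\bi a_n\xi}|=e^{-2|n|\bar\sigma_1\xi}\le e^{-2|n|\delta_2k_l\bar\sigma_1}$, while $|e^{\bi\mu_l b_j}|\lesssim1$ there (for $j=1,2$ this is immediate from ${\rm Im}(\mu_l)\ge0$; for $j=3$ it is bounded by a fixed constant because $\mu_l$ stays bounded until $\xi$ passes $k_l$, beyond which ${\rm Re}(\mu_l)=0$), so the residual integral is finite. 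On the vertical ray ${\rm Re}(\xi)=\delta_2k_l$ the prefactor $e^{-2|n|\delta_2k_l\bar\sigma_1}$ again appears, multiplied by an exponential in ${\rm Im}(\xi)$ whose rate equals ${\rm Re}(a_n^{x_1,y_1})$ minus the at-most-linear-in-${\rm Im}(\xi)$ growth of $|e^{\bi\mu_l b_j}|$; the inequalities (\ref{eq:cond:del2:a})--(\ref{eq:cond:del2}) defining $\delta_2$ are designed exactly so that this net rate remains $\ge(2|n|-2)M_1+d_1>0$. Bounding both pieces by $\int_0^\infty e^{-ct}\,dt\lesssim 1/c$ yields (\ref{eq:hankel:est:1}). (Alternatively, (\ref{eq:hankel:est:1}) follows from Lemma~\ref{lemhankel} together with the lower bound on ${\rm Im}\sqrt{a_n^2+b_j^2}$ of Lemma~\ref{lemab}, but the contour argument is needed for the gradient anyway.)

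For the exceptional term $n=0$, $j=3$ the two coordinates swap roles: $a_0^{x_1,y_1}=|x_1-y_1|\le L_1/2+R$ is real, so the decay must be extracted from ${\rm Im}(b_3)=2\bar\sigma_2$. I would then deform onto $P_l^1$ --- the positive imaginary axis down to the origin, the segment to $\sqrt{1-\varepsilon_1^2}k_l$, and the arc $Q_l^1$. On the imaginary-axis part $\mu_l\ge k_l$ is real, so $|e^{\bi\mu_l b_3}|\le e^{-2\bar\sigma_2\mu_l}\le e^{-2\bar\sigma_2 k_l}$, and using the elementary bound $\sqrt{k_l^2+t^2}\ge(k_l+t)/\sqrt2$ this piece is $\lesssim e^{-\sqrt2\bar\sigma_2 k_l}$, the competing growth $e^{(L_1/2+R)t}$ coming from $e^{\bi a_0\xi}$ being beaten because ${\rm Re}(b_3)\ge 2M_2-L_2/2-R>L_1/2+R$ under (\ref{eq:assump1}); on $Q_l^1$ the defining inequality (\ref{eq:cond:eps1}) for $\varepsilon_1$ makes the analogous competition close with a positive net rate (essentially $\ge 2d_2$), so this piece is $\lesssim e^{-2\varepsilon_1\bar\sigma_2 k_l}$. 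Collecting the pieces gives (\ref{eq:hankel:est:2}). In every case the gradient bounds (\ref{eq:gradhankel:est:1}) and (\ref{eq:gradhankel:est:2}) follow by differentiating under the integral sign: on $B_{\rm in}$ one has $\partial_{x_1}\tx_1=\partial_{x_2}\tx_2=1$, so $\nabla_x$ just inserts one factor $\bi\xi$ or $\bi\mu_l$, which is $O(k_2)$ on the part of the contour carrying the mass of the integral (or is absorbed by the surviving exponential decay), producing exactly the extra $k_2$.

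The main obstacle is the delicate bookkeeping that recurs throughout Section~4: one must pick the tilt parameters $\delta_2$ and $\varepsilon_1$ (both $\eqsim1$) so that the single growing exponential --- from ${\rm Im}(b_3)$ when $n\neq0$, and from ${\rm Re}(a_0)$ when $n=0$ --- is \emph{strictly} dominated by the decay coming from the complementary coordinate on \emph{every} piece of the deformed contour at once. This is precisely what the geometric constraints (\ref{eq:cond:del2:a})--(\ref{eq:cond:del2}) and (\ref{eq:cond:eps1}) encode, and it is where the comparability assumptions (\ref{eq:assump1})--(\ref{eq:assump3}) on $L_j$, $d_j$, $\bar\sigma_j$ and $R$ genuinely enter; everything else reduces to the integrability of the $1/\mu_l$ branch point at $\xi=\pm k_l$ and elementary one-dimensional exponential integrals.
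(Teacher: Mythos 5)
Your proposal is correct and follows essentially the same route as the paper's proof: the same Sommerfeld representation, the same contour deformation onto $P_l^{\delta_2}$ for $n\neq0$ and onto $P_l^1$ for the $n=0,\,j=3$ term, the same piece-by-piece exponential-integral bounds, and even the same observation that Lemma~\ref{lemhankel} together with Lemma~\ref{lemab} gives the pointwise estimate directly but the contour argument is retained because it also yields the gradient bounds. (The only slip, immaterial to the conclusion, is that on the imaginary-axis piece $e^{\bi a_0^{x_1,y_1}\xi}=e^{-a_0^{x_1,y_1}\xi_2}$ with $a_0^{x_1,y_1}\ge 0$ is itself already a decay factor, so there is no competing growth there to beat.)
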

\subsection{Convergence analysis of the source problem}
We now suppose $f\in L^2(D)$. According to Theorem 3.4, we easily get that the
UPML problem (\ref{eq:upml}) possesses the following unique solution:
\begin{equation}\label{Greenrep1}
\tilde{u}(x) = \int_{D} G_{\rm PML}(x,y)f(y)dy,\quad x\in B_{\rm in}.
\end{equation}
On the other hand, as we mentioned in the introduction, the original problem (\ref{eq:org:helm}-\ref{eq:rad:cond})
possesses the following unique solution:
\begin{equation}\label{Greenrep2}
u(x) = \int_{D} G_{k_1,k_2}(x,y)f(y)dy,\quad x\in B_{\rm in}.
\end{equation}
We are now ready to analyze the error between $G_{\rm PML}$ and $G_{\rm
  layer}^{k_1,k_2}$ for any
$x\in B_{\rm in}$ and $y\in D$. Combining the estimates in Lemmas \ref{lemA45},
\ref{lemA47}, \ref{lemA48} and \ref{lemA49}, and making use of
the definitions of $G_{\rm PML}$ in (\ref{eq:infseries:G:1}) and
(\ref{eq:infseries:G:2}), we get the following pointwise convergence result.
\begin{mytheorem}\label{Greenestimate}
	For any $x\in B_{\rm in}$ and $y\in B_{\rm in}\cap D$, we have
	\begin{align*}
	|G_{\rm PML}(x,y) - G_{k_1,k_2}(x,y)|\lesssim
	&e^{-\gamma k_1\min(\bar{\sigma}_1,\bar{\sigma}_2)},\\
	|\nabla_{x}\left(  G_{\rm PML}(x,y) - G_{k_1,k_2}(x,y)\right)|\lesssim
	&k_2e^{-\gamma k_1\min(\bar{\sigma}_1,\bar{\sigma}_2)},
	\end{align*}
	where $\gamma=\min(\sqrt{2}/2,
  2\varepsilon_0,2\varepsilon_1,2\delta_1,2\delta_2)\eqsim 1$.
	\begin{proof}
		By {Lemmas \ref{lemA45}, \ref{lemA47}, \ref{lemA48} and \ref{lemA49}}, we
    get, for any $x\in B_{\rm in}^i$ and any $y\in B_{\rm in}^j\cap D$ for
    $i,j=1,2$,
		\begin{align*}
		|G_{\rm PML}(x,y) - G_{\rm layer}^{i,j}(x,y)|\lesssim& e^{-\min(\sqrt{2},2\varepsilon_0)k_1\bar{\sigma}_2} + e^{-\min(\sqrt{2},2\varepsilon_1)k_1\bar{\sigma}_2} \\
		&+ \sum_{n=1}^{+\infty} \left( e^{-\sqrt{2}k_1\bar{\sigma}_2/2 - \gamma_0|n|} + e^{-2|n|\delta_1 k_1\min(\bar{\sigma}_1,\bar{\sigma}_2)} + e^{-2|n|\delta_2k_1\bar{\sigma}_1} \right)\\
		\lesssim& e^{-\gamma k_1\min(\bar{\sigma}_1,\bar{\sigma}_2)}.
		\end{align*}
		The estimates for the gradient of the difference follow the same argument.
	\end{proof}
\end{mytheorem}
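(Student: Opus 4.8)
The plan is to read the difference $G_{\rm PML}(x,y)-G_{k_1,k_2}(x,y)$ directly off the series representations (\ref{eq:infseries:G:1})--(\ref{eq:infseries:G:2}). Since $x\in B_{\rm in}$ and $y\in B_{\rm in}\cap D$ lie in the region where $\sigma_1=\sigma_2=0$, the stretched coordinates coincide with the physical ones, and $G^{i,j}_{\rm layer}(\tx,\ty)$ is exactly the true layered Green's function $G_{k_1,k_2}(x,y)$ restricted to $x\in\Omega^i$, $y\in\Omega^j$ (as observed after (\ref{eq:g:-+})). Subtracting $G^{i,j}_{\rm layer}$ from (\ref{eq:infseries:G:1})--(\ref{eq:infseries:G:2}) therefore leaves only a short, explicit list of residual pieces: (i) the zeroth-order residual integral $\int_{\rm EXT}e^{\bi\xi a_0^{x_1,y_1}}f^{i,j}_{x_2,y_2}(\xi)/A\,d\xi$; (ii) when $i=j$, the zeroth-order image Hankel term $H_0^{(1)}\big(k_i\sqrt{(a_0^{x_1,y_1})^2+(b_3^{x_2,y_2})^2}\big)$; (iii) the infinite sums over $n\neq 0$ of the integrals of $f^{i,j}_{x_2,y_2}/A+g^{i,j}_{x_2,y_2}$ against $e^{\bi\xi a_n^{x_1,y_1}}$; and (iv) when $i=j$, the infinite sums over $n\neq 0$ of the image Hankel terms $H_0^{(1)}\big(k_i\sqrt{(a_n^{x_1,y_1})^2+(b_\ell^{x_2,y_2})^2}\big)$, $\ell=1,2,3$.

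Next I would bound each family termwise, invoking the lemmas already established in this section. Term (i) is controlled by Lemma~\ref{lemA45}, giving $\lesssim e^{-\min(\sqrt2,2\varepsilon_0)k_1\bar{\sigma}_2}$; term (ii) is the $b_3$ case of Lemma~\ref{lemA49}, giving $\lesssim e^{-\min(\sqrt2,2\varepsilon_1)k_1\bar{\sigma}_2}$. For the $n\neq0$ integrals in (iii) I would split the integrand into its $f^{i,j}/A$ part, bounded by $\lesssim e^{-\sqrt2 k_1\bar{\sigma}_2/2-\gamma_0|n|}$ via Lemma~\ref{lemA47}, and its $g^{i,j}$ part, bounded by $\lesssim e^{-2|n|\delta_1 k_1\min(\bar{\sigma}_1,\bar{\sigma}_2)}$ via Lemma~\ref{lemA48}. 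For the $n\neq0$ Hankel sums in (iv), the first estimate of Lemma~\ref{lemA49} gives $\lesssim e^{-2|n|\delta_2 k_1\bar{\sigma}_1}$, uniformly in $\ell\in\{1,2,3\}$. Finally I would sum over $|n|\ge1$: each $n$-dependent bound is of the form $\rho^{|n|}$ with $\rho=e^{-c}<1$ where $c\in\{\gamma_0,\ 2\delta_1 k_1\min(\bar{\sigma}_1,\bar{\sigma}_2),\ 2\delta_2 k_1\bar{\sigma}_1\}$ is $\gtrsim1$ by assumptions (\ref{eq:assump1})--(\ref{eq:assump3}), so $\sum_{n\neq0}\rho^{|n|}\lesssim1$. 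Collecting the finitely many surviving bounds and absorbing constants into $\lesssim$ yields $|G_{\rm PML}(x,y)-G_{k_1,k_2}(x,y)|\lesssim e^{-\gamma k_1\min(\bar{\sigma}_1,\bar{\sigma}_2)}$ with $\gamma=\min(\sqrt2/2,2\varepsilon_0,2\varepsilon_1,2\delta_1,2\delta_2)\eqsim1$. The gradient estimate follows verbatim from the gradient versions of the same four lemmas, the extra factor $k_2$ arising from differentiating the exponentials $e^{\bi\xi a_n^{x_1,y_1}}$ and $e^{\bi\mu_\ell b_\ell^{x_2,y_2}}$, since $|\xi|,|\mu_\ell|\lesssim k_2$ on the contours used.

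The genuinely hard work does not lie in this synthesis; it has already been front-loaded into Lemmas~\ref{thmA41}, \ref{lemABdelta0}, \ref{lemA42} and \ref{lemA45}--\ref{lemA49}, namely the lower bound on $|A(\xi)|$ that is \emph{uniform} in $(\xi,\bar{\sigma}_2,M_2)$ away from the branch points $\pm k_1,\pm k_2$, and the sharp extraction from $f^{i,j;\ell}_{x_2,y_2}$ of the decaying factor $e^{\bi\mu_\ell(d_2+\bi\bar{\sigma}_2)}$, which is what forces the decay rate to depend only on $k_1\bar{\sigma}_1$ (equivalently $k_1\bar{\sigma}_2$, since $\bar{\sigma}_1\eqsim\bar{\sigma}_2$) and not on $L_j$. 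At the level of the present theorem the only point requiring care is bookkeeping: checking that the contours $P_\ell^0$, $P_f^{\delta_0}$, $P_g^{\delta_1}$, $P_\ell^{\delta_2}$, $P_\ell^1$ used in those lemmas are all admissible for Cauchy's theorem (no zero of $A$ is crossed and the integrands decay along the imaginary direction), and that the small constants $\varepsilon_0,\varepsilon_1,\delta_0,\delta_1,\delta_2$ can be chosen simultaneously $\eqsim1$ so as to satisfy (\ref{eq:cond:eps0}), (\ref{eq:cond:del1}), (\ref{eq:cond:del2:a}), (\ref{eq:cond:eps1}) under the geometric assumption (\ref{eq:assump2}).
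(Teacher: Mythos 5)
Your proposal is correct and follows essentially the same route as the paper: identify $G_{\rm layer}^{i,j}(\tilde x,\tilde y)=G_{k_1,k_2}(x,y)$ on the physical domain, bound the remaining residual and image pieces termwise by Lemmas~\ref{lemA45}, \ref{lemA47}, \ref{lemA48}, \ref{lemA49}, and sum the resulting geometric series using the fact that $\gamma_0,\,\delta_1 k_1\bar\sigma_j,\,\delta_2 k_1\bar\sigma_1\gtrsim 1$ under assumptions (\ref{eq:assump1})--(\ref{eq:assump3}). The only difference is stylistic: you spell out explicitly the cancellation of $G_{\rm layer}$, the enumeration of the residual families, and the geometric-series bookkeeping that the paper leaves implicit, which makes the argument clearer without changing its substance.
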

Combining the estimate in Theorem \ref{Greenestimate} and the Green's
representations \eqref{Greenrep1} and \eqref{Greenrep2}, we immediately get the
result in Theorem \ref{convergthm}.
\section{Conclusion}
In this paper, we have shown that \cb{ the layered medium scattering problem
  with UPML truncation always possesses a unique solution with no constraints
  about wavenumber or PML absorbing strength}. As the PML absorbing strength
increases, the solution of the truncated problem converges to the solution of
the original scattering problem exponentially fast. The proof is based on the
construction of the Green's function for the layered medium with UPML
truncation. In particular, we show that the Green's function always exists
within the UPML, regardless of wavenumber and absorbing strength of the PML. Our
future work includes investigating the well-posedness for scattering problem
with obstacles, and the extension to multi-layered medium, as well as the
analysis to Maxwell's equations.

\noindent
{\bf Acknowledgements:}
\cb{WL would like to thank Prof. Ya Yan Lu of City University of
  Hong Kong for the motivation of this work. }

\bibliographystyle{plain}
\bibliography{reference}
\end{document}